\setlist[itemize]{itemsep=0pt,parsep=2pt,topsep=2pt}
\setlist[enumerate]{itemsep=0pt,parsep=2pt,topsep=2pt}
\newtheorem{theorem}{Theorem}[section]
\newtheorem{lemma}{Lemma}[section]
\newtheorem{cor}{Corollary}[section]
\newtheorem{prop}{Proposition}[section]
\newtheorem{assumption}{Assumption}[section]
\theoremstyle{remark}
\newtheorem{rem}{Remark}[section]
\theoremstyle{definition}
\newtheorem{example}{Example}[section]
\newtheorem{definition}{Definition}[section]
\numberwithin{equation}{section}
\def\argmin{\mathop{\arg\min}}
\def\A{\mathbb{A}}
\def\X{\mathbb{X}}
\def\E{\mathbb{E}}
\def\R{\mathbb{R}}
\def\Q{\mathbb{Q}}
\def\Pr{\mathbb{P}}
\def\C{\mathcal{C}}
\def\Cs{{\mathcal{C}_s}}
\def\D{\mathcal{D}}
\def\P{\mathcal{P}}
\def\B{\mathcal{B}}
\def\U{\mathcal{U}}
\def\Z{\mathbb{Z}}
\def\S{\mathcal{S}}
\def\imS{\S_I}
\def\timS{\tilde \S_I}
\def\imSn{\S'_I}
\def\timSn{{\tilde \S}'_I}
\def\mmS{\S_M}
\def\tmmS{{\tilde \S}_M}
\def\hmmS{{\widehat \S}_M}
\def\imPi{\Pi_I}
\def\imPin{\Pi'_I}
\def\nmid{\,|\,}
\def\grh{\text{grh} \,}
\def\ind{\mathbb{1}}
\def\mdp{$\text{AC}^+$}
\def\mdn{$\text{AC}^-$}
\def\ptmdp{$\widetilde{\text{AC}}^+$}
\def\ptmdn{$\widetilde{\text{AC}}^-$}
\def\tJ{{\tilde J}}
\begin{document} 
\markboth{Strategic Measures in Stochastic Control on Borel Spaces}{Strategic Measures in Stochastic Control on Borel Spaces}

\title{On Strategic Measures and Optimality Properties in Discrete-Time Stochastic Control with\\ Universally Measurable Policies\thanks{A shorter version of this paper is to appear in the journal \emph{Mathematics of Operations Research}. This research was supported by grants from DeepMind, Alberta Machine Intelligence Institute (AMII), and Alberta Innovates---Technology Futures (AITF).}}

\author{Huizhen Yu\thanks{RLAI Lab, Department of Computing Science, University of Alberta, Canada (\texttt{janey.hzyu@gmail.com})}}
\date{} 

\maketitle

\begin{center} 
\emph{Dedicated to the memory of Professor Sanjoy K.\ Mitter, an inspiring mentor.}
\end{center}
\bigskip

\begin{abstract}
This paper concerns discrete-time infinite-horizon stochastic control systems with Borel state and action spaces and universally measurable policies. We study optimization problems on strategic measures induced by the policies in these systems. The results are then applied to risk-neutral and risk-sensitive Markov decision processes, as well as their partially observable counterparts, to establish the measurability of the optimal value functions and the existence of universally measurable, randomized or nonrandomized, $\epsilon$-optimal policies, for a variety of average cost criteria and risk criteria. We also extend our analysis to a class of minimax control problems and establish similar optimality results under the axiom of analytic determinacy.
\end{abstract}

\bigskip
\bigskip
\bigskip
\noindent{\bf Keywords:}\\
Markov decision processes; risk-sensitive control; minimax control;\\ 
Borel spaces; universally measurable policies; strategic measures 

\clearpage 
\tableofcontents

\clearpage
\section{Introduction}

We consider discrete-time infinite-horizon stochastic control in the universal measurability framework as formulated by Shreve and Bertsekas \cite{ShrB78, ShrB79} (see also \cite[Part II]{bs}), where the state and action spaces are Borel, the control constraints have analytic graphs, and the policies are universally measurable. Our particular focus will be on Markov decision processes (MDPs), their partially observable counterparts in which the controller has imperfect information about the states, and minimax control problems in which the controller plays against an opponent.
The main results of this paper concern optimality properties of these systems, such as the measurability of the optimal value functions and the existence of measurable, randomized or nonrandomized, optimal or nearly optimal policies, under general conditions on the system dynamics and with respect to (w.r.t.) a broad range of performance criteria. 

For MDPs with \emph{Borel measurable policies}, strategic measures were analyzed and optimality results similar to ours were established a long time ago in Strauch \cite{Str-negative}, Blackwell~\cite{Blk76}, Dynkin and Yushkevich~\cite[Chaps.\ 3, 5]{DyY79}, and Feinberg~\cite{Fei82, Fei82a, Fei91, Fei96}. (Although this is outside our scope, we mention that topological properties of strategic measures have also been studied by Sch{\"a}l \cite{Sch75b} and Balder \cite{Bal89}, and by Y{\"u}ksel and Saldi recently for Borel team-decision models \cite{YuS17}.) 
For MDPs with universally measurable policies, however, an analysis of strategic measures seems to be missing in the literature. One of the purposes of this paper is to fill this gap. While the two MDP models are similar, their differences are not superficial, so our results are not mere repetitions of the established results for Borel measurable policies. In our proofs, besides arguments from the prior works, we use also the properties of analytic sets and semianalytic or universally measurable functions. Another important difference is in the notion of optimality. The prior results in \cite[Chaps.\ 3, 5]{DyY79} and \cite{Fei82, Fei82a, Str-negative} concern the existence of Borel measurable, randomized or nonrandomized ($\epsilon$-)optimal policies for \emph{almost all} states, w.r.t.\ any given finite measure on the state space.
By contrast, with universally measurable policies, we can show that there exist policies that are ($\epsilon$-)optimal \emph{everywhere}.

Our work also builds on the early work of Shreve and Bertsekas \cite{ShrB78, ShrB79} (see also \cite[Chap.~9]{bs}) on MDPs with universally measurable policies. They focused on the discounted and total cost criteria. Instead of strategic measures, they considered the marginal distributions of the state and action pairs at each stage and related the optimal control problems to optimization problems on those sequences of marginal distributions that can be induced by the policies through time. 
This method has also been used by the author \cite{Yu-tc15, Yu20} for certain total and average cost problems. But it cannot be applied when the performance criteria are not functions of those marginal state-action distributions. Such criteria include, for example, many popular risk criteria and certain long-run average cost criteria that involve costs along sample paths. To address problems with such performance criteria in the universal measurability framework is the main purpose of this paper.

For minimax control, we consider a class of problems in which the controller does not observe the opponent's actions, and we introduce an absolute continuity condition on the finite-dimensional distributions of the processes observable to the controller while the controller applies the same policy (see Assumption~\ref{cond-minimax-abscont}). This allows us to reformulate the minimax control problems as minimax optimization problems on strategic measures. We then prove an optimality result regarding the optimal value function and the ($\epsilon$-)optimal policies for the controller (see Theorem~\ref{thm-minmax-opt}) by using the properties of the set of strategic measures and a uniformization theorem of Kond{\^o} from descriptive set theory. In this result, besides the absolute continuity condition mentioned earlier and a measurability condition on the performance criterion, we assume the axiom of analytic determinacy. (The standard ZFC axioms, i.e., Zermelo--Fraenkel set theory with the axiom of choice, are assumed throughout the paper without explicit reference.) The purpose of this additional axiomatic assumption is to resolve the measurability questions that cannot be settled in the ZFC system. Previously, such an approach and a closely related axiomatic assumption (the universal measurability of $\Sigma_2^1$ sets) had been discussed and used in Maitra, Purves, and Sudderth \cite{MPS90} for Borel leavable gambling problems, and in Prikry and Sudderth \cite{PrS16} for one-stage Borel parametrized games. Further discussions about analytic determinacy (and $\Sigma_2^1$ sets) will be given in Remarks~\ref{rmk-zfc-ad}, \ref{rmk-ad} and Section~\ref{sec-sel-proof}.

We are not aware of any prior result similar to ours in the minimax control/stochastic game literature.
The majority of published results (for discounted, total, or average cost criteria) concern the validity of dynamic programming equations, the existence of stationary optimal policies, and/or minimax equalities, under various continuity/compactness conditions on the control models (see e.g., \cite{GHH02,JaN14,JaN18,Rie91} and the references therein).
Nowak \cite{Now85b} and Maitra and Sudderth \cite{MS93} placed such conditions on only one of the players. Maitra and Sudderth \cite{MS98} did not require such conditions; to avoid measurability issues, they took the approach that is based on the theory of finitely additive (instead of countably additive) probability measures. 

We mention that this paper does not aim to establish dynamic programming equations for the optimal value functions, because the strategic-measure approach alone is generally insufficient for that purpose (especially for average-cost and risk-sensitive problems). However, the optimality results it produces can serve as starting point for deriving such equations. We will demonstrate this for minimax control with risk-neutral criteria (see Section~\ref{sec-minmax-oe}). Some other related examples can be found in our earlier analyses of discounted, total-cost, and average-cost MDPs with universally measurable policies (cf.\ \cite[Thm.\ 2.3]{Yu20}, \cite[App.\ A]{Yu-tc15}, and \cite[Sec.\ 2]{Yu22}, respectively), and in Y{\"u}ksel \cite{Yuk20} for finite-horizon decentralized team-decision problems with Borel measurable policies.

This paper is organized as follows. In Section~\ref{sec-2} we give background materials about Borel-space MDPs. In Section~\ref{sec-3} we present general results regarding the measurability of various sets of strategic measures and the optimality properties of associated optimization problems. We then apply these results to risk-neutral and risk-sensitive MDPs in Section~\ref{sec-4}, where we also extend these results to partially observable problems and minimax control. We collect some of our proofs in Section~\ref{sec-5}.

\section{Borel-Space MDPs} \label{sec-2}

In this section we describe the universal measurability framework for Borel-space MDPs. We start with the definitions and important properties of certain sets/functions that underly this framework.

\subsection{Preliminaries} \label{sec-2.1}

A Borel subset of a Polish space is called a \emph{Borel space} (or \emph{standard Borel space}) \cite[Def.\ 7.7]{bs}. 
For a Borel space $X$, $\B(X)$ denotes the Borel $\sigma$-algebra on $X$, and $\P(X)$ denotes the set of all probability measures on $\B(X)$. We endow the space $\P(X)$ with the topology of weak convergence; then $\P(X)$ is also a Borel space~\cite[Sec.\ 7.4.2]{bs}. 
The \emph{universal $\sigma$-algebra} on $X$ is given by $\U(X) : = \cap_{p \in \P(X)} \B_p(X)$. Here, for each $p \in \P(X)$, $\B_p(X)$ is the $\sigma$-algebra generated by $\B(X)$ and all the subsets of $X$ with $p$-outer measure $0$, and on $\B_p(X)$, $p$ admits a unique extension, called the \emph{completion of $p$} (cf.\ \cite[Sec.\ 3.3]{Dud02}). 
The sets in $\U(X)$ and the $\U(X)$-measurable mappings on $X$, being measurable w.r.t.\ the completion of any $p \in \P(X)$, are called \emph{universally measurable}.

Let $X$ and $Y$ be Borel spaces. A function $q(\cdot \nmid \cdot): \B(Y) \times X \to [0,1]$ is called a \emph{universally measurable stochastic kernel} (resp.\ \emph{Borel measurable stochastic kernel}) on $Y$ given $X$, if for each $x \in X$, $q(\cdot \nmid x)$ is a probability measure on $\B(Y)$ and for each $B \in \B(Y)$, $q(B \nmid \cdot)$ is universally measurable (resp.\ Borel measurable). Equivalently, such a kernel can also be defined as a measurable mapping $x \mapsto q(\cdot \nmid x)$ from the space $(X, \U(X))$ (resp.\ $(X, \B(X))$) into the space $\big(\P(Y), \B(\P(Y))\big)$ of probability measures; cf.\ \cite[Def.~7.12, Prop.~7.26, Lem.~7.28]{bs}. 
To refer to the stochastic kernel, we will often use the notation $q$ or $q(dy \,|\, x)$. 

For $p \in \P(X)$ and $D \in \U(X)$, $p(D)$ is the probability of $D$ w.r.t.\ the completion of $p$. (Notation-wise, we do not distinguish between $p$ and its completion.) Let $\R : = (-\infty, + \infty)$ and $\bar \R : = [-\infty, + \infty]$. If $f: X \to \bar \R$ is universally measurable, then $\int f dp : = \int f^+ dp - \int f^- dp$, where $f^+$ ($f^-$) is the positive (negative) part of $f$ and the integration is w.r.t.\ the completion of $p$. (For summations involving extended real numbers, the convention $\infty - \infty = - \infty + \infty = \infty$ is adopted unless otherwise stated.) 
The same interpretations apply to $q(D \nmid y)$ and $ \int f(x) \, q(dx \nmid y)$, if $D$ and $f$ are as defined above and $q(dx \nmid y)$ is a Borel or universally measurable stochastic kernel on $X$ given $Y$. 

A subset of $X$ is called \emph{analytic}, if it is either empty or the image of a Borel subset of some Polish space under a Borel measurable mapping (cf.\ \cite[Prop.\ 7.41]{bs}, \cite[Sec.~13.2]{Dud02}). The complement of an analytic set is called \emph{coanalytic}. A \emph{lower semianalytic} function is a function $f: D \to \bar \R$ such that the domain $D$ is an analytic set and for every $r \in \R$, the level set $\{ x \in D \!\mid f(x) \leq r\}$ is analytic \cite[Def.\ 7.21, Lem.\ 7.30(1)]{bs}. (This definition is equivalent to that the epigraph of $f$, $\{ (x, r) \in D \times \R \!\mid   f(x) \leq r \}$, is analytic; cf.~\cite[p.~186]{bs}.) Replacing ``$\leq r$'' with ``$\geq r$'' in the preceding statement gives the definition of an \emph{upper semianalytic} function.
Borel subsets of $X$ are analytic (and coanalytic);
Borel measurable functions from $X$ into $\bar \R$ are lower (and upper) semianalytic.
The $\sigma$-algebra $\mathcal{A}(X)$ generated by analytic subsets of $X$ contains $\B(X)$ and is contained in $\U(X)$. 
Thus analytically measurable (i.e., $\mathcal{A}(X)$-measurable) sets and functions, in particular, analytic subsets of $X$ and lower (or upper) semianalytic functions on $X$, are universally measurable.

In addition, analytic sets and lower semianalytic functions have a number of important properties that play fundamental roles in the mathematical framework for Borel-space MDPs. Let us recount a few of these properties here, which will be used frequently in this paper (see the article \cite{BFO74} for a more comprehensive review, and the monograph \cite[Chap.\ 7]{bs} for an in-depth study). 
\begin{itemize}[leftmargin=0.65cm,labelwidth=!]
\item[(a)] For any analytic set $D \subset X$ and real number $r \geq 0$, $\{ p \in \P(X) \mid p (D) > r\}$ and $\{ p \in \P(X) \mid p (D) \geq r\}$ are analytic subsets of $\P(X)$ (\cite[Prop.~7.43, Cor.~7.43.1]{bs}; see also \cite[Lem.~(25)]{BFO74}).
\item[(b)] If $f: X \times Y \to \bar \R$ is lower semianalytic and $q(dy \,|\, x)$ is a Borel measurable stochastic kernel on $Y$ given $X$, then $\phi(x) : = \int f(x,y) \, q(dy \nmid x)$ is lower semianalytic on $X$ \cite[Prop.\ 7.48]{bs}.
\end{itemize}
For $D \subset X \times Y$, $\text{proj}_X(D) : = \{ x \in X \!\mid (x,y) \in D \ \text{for some} \ y \in Y \}$ is the projection of $D$ on $X$, and we call a function $\psi: \text{proj}_X(D) \to Y$ \emph{a selection of $D$} if the graph of $\psi$ is contained in $D$, i.e., $(x, \psi(x)) \in D$ for all $x \in \text{proj}_X(D)$.
\begin{itemize}[leftmargin=0.65cm,labelwidth=!]
\item[(c)] If $D \subset X \times Y$ is analytic, then $\text{proj}_X(D)$ is analytic \cite[Prop.~7.39]{bs} and $D$ admits an analytically measurable selection (the Jankov-von Neumann selection theorem \cite[Prop.\ 7.49]{bs}).
\end{itemize}
For partial minimization of a function $f$ on $X \times Y$ (that is, minimizing $f$ over $Y$ for each $x \in X$), by applying (c) to the level sets or epigraph of a lower semianalytic $f$ and using the properties of universally measurable functions, one obtains the following:
\begin{enumerate}[leftmargin=0.65cm,labelwidth=!] 
\item[(d)] If $D \subset X \times Y$ is analytic and $f: D \to \bar \R$ is lower semianalytic, then the function $f^*: \text{proj}_X(D) \to \bar \R$ defined by
\begin{equation} \label{eq-minf}
   f^*(x) = \inf_{y \in D_x} f(x, y), \quad \text{where} \ D_x = \{ y \in Y \mid (x, y) \in D \}, 
\end{equation}   
is also lower semianalytic  \cite[Prop.~7.47]{bs}. Furthermore, for each $\epsilon > 0$, there exists a universally measurable function $\psi: \text{proj}_X(D) \to Y$
such that $\psi(x) \in D_x$ for all $x \in \text{proj}_X(D)$ and 
\begin{equation} 
     f(x, \psi(x)) = f^*(x), \qquad \forall \,  x \in E^*,
\end{equation}
\begin{equation}  
 f(x, \psi(x)) \leq  \begin{cases}
        f^*(x) + \epsilon & \text{if} \ f^*(x) > - \infty, \\
        -1/\epsilon & \text{if} \ f^*(x) = - \infty,
        \end{cases} \qquad \ \ \  \forall \, x \in \text{proj}_X(D) \setminus E^*,
\end{equation}
where $E^* : = \{ x \in \text{proj}_X(D) \mid  \argmin_{y \in D_x} f(x,y) \not= \varnothing \} \in \U(X)$ (cf.\ \cite[Prop.~7.50]{bs}).
\end{enumerate}

\begin{rem} \rm \label{rmk-zfc-ad}
We will introduce later a counterpart of (d) for upper semianalytic functions as well as for a more general class of functions under the axioms of ZFC and analytic determinacy (see Prop.~\ref{prp-ext-sel} and Remark~\ref{rmk-sigma21}). Based on G{\"o}del's work \cite{God38}, it is known that this is not possible under ZFC alone (cf.\ the discussions in \cite[p.~302]{bs} and Remark~\ref{rmk-ad}(b)). 
\qed 
\end{rem}

\noindent {\bf Miscellaneous notation:} Throughout the paper, for $x \in X$, $\delta_x$ denotes the Dirac measure concentrating at $x$, and $\D(X)$ denotes the set of all Dirac measures on $X$. For $B \subset X$, $B^c: = X \setminus B$ and $\ind_B$ is the indicator function for $B$. For an event $E$ in a probability space with a probability measure $p$, we write $\ind(E)$ for the indicator of $E$ and $p\{ E\}$ for the probability of $E$.

\subsection{MDPs with Universally Measurable Policies} \label{sec-2.2}

We consider a Borel-space MDP in the universal measurability framework (cf.\ \cite[Sec.\ 8.1]{bs}). Specifically, the following will be assumed throughout the paper unless otherwise stated:
\begin{itemize}[leftmargin=0.6cm,labelwidth=!]
\item The state space $\X$ and the action space $\A$ are \emph{Borel spaces}.
\item State transitions are governed by $q(dy \nmid x, a)$, a \emph{Borel measurable} stochastic kernel on $\X$ given $\X \times \A$.
\item The control constraint is specified by a set-valued mapping $A: x \mapsto A(x)$ on $\X$, where $A(x) \subset \A$ is a nonempty set of admissible actions at the state $x$. The graph of $A(\cdot)$,
$\Gamma := \{(x, a) \mid x \in \X, a \in A(x)\},$
is \emph{analytic}.
\item The one-stage cost function $c: \Gamma \to \bar \R$ is \emph{lower semianalytic}.
\end{itemize}

Our primary interest is in infinite-horizon control problems (several performance criteria will be introduced in Section~\ref{sec-4}). 
Let $\omega : = (x_0, a_0, x_1, a_1, \ldots)$, where $x_n$ and $a_n$ denote the state and action, respectively, at the $n$th stage. For $n \geq 0$, let $h_n : = (x_0, a_0, x_1, a_1,\ldots, x_n)$ and $h'_n : = (x_0, a_0, x_1, a_1,\ldots, x_n, a_n)$. We denote the space of $\omega$ by $\Omega: = (\X \times \A)^\infty$, the space of $h_n$ by $H_n : = (\X \times \A)^n \times \X$, and the space of $h'_n$ by $H'_n : = (\X \times \A)^{n+1}$. All these spaces are endowed with the product topology to make them Borel spaces \cite[Prop.\ 7.13]{bs}. 

A \emph{universally measurable policy} (or a \emph{policy} for short) is a sequence of \emph{universally measurable} stochastic kernels, $\pi : =(\mu_0, \mu_1, \ldots)$, where for each $n \geq 0$,
$\mu_n\big(da_n \nmid h_n \big)$ is a universally measurable stochastic kernel on $\A$ given $H_n$ that satisfies the control constraint
\begin{equation}  \label{eq-control-constraint}
   \mu_n\big(A(x_n) \mid h_n \big) = 1, \qquad \forall \, h_n = (x_0, a_0, \ldots, a_{n-1}, x_n) \in H_n.
\end{equation}   
Note that the set $A(y), y \in \X$, is analytic since it is a section of the analytic set $\Gamma$ (cf.\ \cite[Prop.~7.40]{bs}), so the probability of $A(x_n)$ in the above expression is well-defined. A \emph{Borel measurable policy} is a policy that consists of Borel measurable stochastic kernels $\{\mu_n\}_{n \geq 0}$.
If for every $n \geq 0$ and every $h_n \in H_n$, $\mu_n(da_n \nmid h_n)$ is a Dirac measure, $\pi$ is called a \emph{nonrandomized} policy. 
A \emph{Markov} (resp.\ \emph{semi-Markov}) policy is a policy such that for every $n \geq 0$, as a function of $h_n$, the probability measure $\mu_n(d a_n \nmid h_n)$ depends only on $x_n$ (resp.\ $(x_0, x_n)$). If a Markov policy $\pi$ has identical stochastic kernels, i.e., $\pi = (\mu, \mu, \ldots)$, we call it a \emph{stationary} policy and write it simply as $\mu$. Likewise, if a semi-Markov policy $\pi$ satisfies that for some stochastic kernel $\tilde \mu$ on $\A$ given $\X^2$, 
$\mu_0(da_0 \nmid x_0) = \tilde \mu(da_0 \nmid x_0, x_0)$ and $\mu_n(d a_n \nmid x_0, x_n)  = \tilde \mu( da_n \nmid x_0, x_n)$ for all $x_0, x_n \in \X$ and $n \geq 1$, we call $\pi$ a \emph{semi-stationary} policy and write it simply as $\tilde \mu$.

Let $\Pi$ denote the set of all policies, and let $\Pi_{sm}, \Pi_m$, $\Pi_{ss}$, and $\Pi_s$ denote the subsets of semi-Markov, Markov, semi-stationary, and stationary policies, respectively. If $\Pi_\star$ is any one of these sets, we write $\Pi'_\star$ for the subset of nonrandomized policies in $\Pi_\star$.  All these sets are nonempty, because by the Jankov-von Neumann selection theorem \cite[Prop.~7.49]{bs}, the analytic set $\Gamma$ admits an analytically measurable selection, which corresponds to a universally measurable nonrandomized stationary policy. By contrast, a Borel measurable policy may not exist even if $\Gamma$ is Borel \cite{Blk-borel}.

For $\pi := \{\mu_n\}_{n\geq0} \in \Pi$, an initial state distribution $p_0 \in \P(\X)$ together with the collection of stochastic kernels $\mu_0(d a_0 \nmid x_0)$, $q(dx_1 \nmid x_0, a_0)$, $\mu_1(da_1 \nmid h_1)$, $q(dx_2 \nmid x_1, a_1), \ldots$ determines uniquely a probability measure $\Pr^\pi_{p_0}$ on $\U(\Omega)$ \cite[Prop.\ 7.45]{bs}. The expectation w.r.t.\ $\Pr^\pi_{p_0}$ is denoted by $\E^\pi_{p_0}$.
If $p_0 = \delta_x$, we will also use the notation $\Pr^\pi_{x}$ and $\E^\pi_{x}$.
Throughout the paper, we will simply write a stochastic process on $\Omega$ as $\{(x_n, a_n)\}_{n \geq 0}$, using $(x_n, a_n)$ to denote the random variables $\big(x_n(\omega), a_n(\omega)\big)$ instead, which are the $(x_n, a_n)$-components of $\omega$. 

\section{Properties of Strategic Measures} \label{sec-3}

The probability measures on $\Omega$ induced by the policies are also known as strategic measures in the literature \cite[Sec.~3.5]{DyY79}.
In the universal measurability framework, these are probability measures on $\U(\Omega)$; however, we will work with their restrictions to $\B(\Omega)$, 
in order to make use of well-studied properties of Borel probability measures. Since a probability measure on $\U(\Omega)$ is uniquely determined by its restriction to $\B(\Omega)$ (cf.\ \cite[Sec.~3.3]{Dud02} on completion of measures), 
the two are effectively the same. 
Specifically, for a policy $\pi \in \Pi$ and an initial distribution $p_0 \in \P(\X)$, let $\rho_{p_0}[\pi]$ denote the restriction of $\Pr_{p_0}^\pi$ to $\B(\Omega)$; if $p_0 = \delta_x$, we also write it as $\rho_x [\pi ]$. 
Let 
$$ \S  : = \big\{ p \in \P(\Omega) \mid p = \rho_{p_0}[\pi], \, \pi \in \Pi, \, p_0 \in \P(\X)\big\}$$
be the set of all strategic measures, and let $\S'$ be the subset of strategic measures corresponding to nonrandomized policies in $\Pi'$.
Similarly, with $\Pi_m$ (resp.\ $\Pi_s$) in place of $\Pi$, we define $\S_m$ (resp.\ $\S_s$) to be the set of all strategic measures induced by Markov (resp.\ stationary) policies, and let $\S'_m$ (resp.\ $S'_s$) denote its subset corresponding to those nonrandomized policies. 

\begin{theorem} \label{thm-strat-m}
For $\S_\star \in \{\S, \S_m, \S_s\}$, $\S_\star$ and $\S'_\star$ are analytic; they are Borel if $\Gamma$ is Borel.
\end{theorem}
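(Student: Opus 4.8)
The plan is to realize each of these sets as a subset of $\P(\Omega)$ carved out by a few families of constraints and then read off its descriptive complexity from properties (a)--(d). Write $M_S:\P(\Omega)\to\P(S)$ for the marginal map onto a finite coordinate block $S$ (e.g.\ $H_n$, $H'_n$, or $(x_n,a_n)$); pushforward under the continuous coordinate projection makes $M_S$ continuous. For each $n$ let $T_n:\P(H'_n)\to\P(H_{n+1})$ be the operation $\nu\mapsto\nu\otimes q$ that appends one transition through the Borel kernel $q(dx_{n+1}\nmid x_n,a_n)$; since $q$ is Borel, $T_n$ is Borel measurable. I claim that $p\in\S$ iff (i) $M_{H_{n+1}}(p)=T_n\big(M_{H'_n}(p)\big)$ for every $n\ge0$ (transition consistency), and (ii) $p(D_n)=1$ for every $n$, where $D_n:=\{\omega\mid(x_n,a_n)\in\Gamma\}$ (the control constraint). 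Indeed, the joint law of $h'_n$ and $x_{n+1}$ under any strategic measure is precisely $M_{H'_n}(p)$ glued to $q$, which is (i), and the control constraint on the policy kernels is exactly (ii).

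Granting this characterization, the complexity is immediate. Condition (i) defines a Borel set: each equation is the preimage of the diagonal of $\P(H_{n+1})^2$ — a Borel set, since $\P(H_{n+1})$ is a Borel space — under the Borel map $p\mapsto\big(M_{H_{n+1}}(p),T_n(M_{H'_n}(p))\big)$, and there are only countably many of them. For condition (ii), each $D_n$ is analytic, being the preimage of the analytic set $\Gamma$ under a continuous projection, so by property (a) the set $\{p\mid p(D_n)\ge1\}=\{p\mid p(D_n)=1\}$ is analytic; when $\Gamma$ is Borel, $D_n$ is Borel and $p\mapsto p(D_n)$ is Borel, so this set is Borel. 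Intersecting the Borel set from (i) with the countably many sets from (ii), and using that analytic sets contain the Borel sets and are closed under countable intersection, shows $\S$ is analytic in general and Borel when $\Gamma$ is Borel.

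It remains to justify the characterization. The forward inclusion is immediate from the construction of $\Pr^\pi_{p_0}$. For the converse, given $p$ obeying (i)--(ii), take $p_0:=M_{x_0}(p)$ and let $\mu_n(da_n\nmid h_n)$ be a regular conditional distribution of $a_n$ given $h_n$; these exist as universally measurable stochastic kernels because $H_n$ and $\A$ are Borel spaces. By (ii) the control constraint holds for $p$-a.e.\ $h_n$, and redefining $\mu_n$ on the exceptional null set by a fixed nonrandomized stationary policy — available from the Jankov--von Neumann selection of $\Gamma$ in property (c) — makes $\pi:=(\mu_n)_{n\ge0}$ a bona fide policy. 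Condition (i) forces the transition conditionals to equal $q$, so $\rho_{p_0}[\pi]$ and $p$ share all finite-dimensional conditionals and hence coincide.

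The classes $\S_m,\S_s$ and the nonrandomized sets $\S'_\star$ are handled by adjoining further structural constraints to (i)--(ii). Nonrandomizedness is the requirement that the conditional of $a_n$ given $h_n$ be a.s.\ Dirac, equivalently that the fiber product over $h_n$ of two conditionally independent copies of $a_n$ be concentrated on the diagonal $\{a_n=a_n'\}$; this is Borel, since forming such fiber products is a Borel operation on measures (measurable disintegration) and the diagonal is Borel. The Markov property is the conditional independence of $a_n$ and $h'_{n-1}$ given $x_n$, i.e.\ equality of $M_{H'_n}(p)$ with the fiber product over $x_n$ of its $(h'_{n-1},x_n)$- and $(x_n,a_n)$-marginals — again Borel. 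Stationarity adds stage-homogeneity of the conditionals $a_n\mid x_n$, expressible through the same disintegration machinery together with lattice operations on the $x_n$-marginals. Each added constraint is Borel, so the verdicts are unchanged, and in each case a policy of the required type is reconstructed as above by patching the stage kernels over a countable cover and extending by the fixed selection off the visited states. I expect the main obstacle to be exactly this last point for $\S_s$: the stage conditionals are determined only almost everywhere, so proving that homogeneity is a genuinely Borel condition, and that the a.e.-agreeing kernels glue into a single stationary kernel that is defined — and satisfies the control constraint — \emph{everywhere}, will require the careful interplay of measurable disintegration with the selection theorem.
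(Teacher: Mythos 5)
Your treatment of $\S$, $\S'$, $\S_m$, and $\S'_m$ is sound and is in substance the paper's own argument: characterize membership by countably many constraints on $p \in \P(\Omega)$; note that the constraint $p\{(x_n,a_n)\in\Gamma\}=1$ defines an analytic (resp.\ Borel) subset of $\P(\Omega)$ when $\Gamma$ is analytic (resp.\ Borel); show the remaining constraints are Borel; and reconstruct a policy of the required type from any admissible $p$ by taking conditional distributions and patching them, on a universally measurable $p$-null exceptional set, with a fixed selection of $\Gamma$. Your encoding of transition consistency---equality of the $H_{n+1}$-marginal with the image of the $H'_n$-marginal under the Borel map $\nu \mapsto \nu \otimes q$---is a mild variant of the paper's device (a.s.\ equality of a jointly measurable conditional kernel $Q_n(dx_{n+1}\nmid h'_n;p)$ with $q$), and your fiber-product formulations of the Dirac and Markov conditions are equivalent to the paper's conditions $\nu_n(\cdot\nmid h_n;p)\in\D(\A)$ and $\nu_n(\cdot\nmid h_n;p)=\kappa(\cdot\nmid x_n;\gamma_n(p))$. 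Note, however, that making the fiber product a Borel operation in $p$ already requires disintegrations that are jointly measurable in the conditioning variable \emph{and} in $p$, i.e., exactly the device of \cite[Cor.~7.27.1]{bs} that the paper makes explicit, so this reformulation saves no machinery.

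The genuine gap is the one you flag yourself: the stationary case $\S_s$ (and $\S'_s$). ``Stage-homogeneity of the conditionals of $a_n$ given $x_n$'' is not yet a well-posed condition, because the stage-$n$ conditional is determined only up to a null set of the stage-$n$ state marginal, and these marginals differ across $n$; vague ``lattice operations on the $x_n$-marginals'' does not resolve this, and without resolving it neither the Borel-ness of the constraint nor the reconstruction of a single everywhere-defined stationary kernel goes through. The paper's resolution (Lemma~\ref{lem-Ss}, following Feinberg) is to form the mixture $\tilde\gamma(p) := \sum_{n\geq 0} 2^{-n-1}\gamma_n(p)$, a Borel function of $p$, and to express stationarity as the $p$-a.s.\ equalities $\nu_n(da_n \nmid h_n; p) = \kappa\big(da_n \nmid x_n; \tilde\gamma(p)\big)$ for all $n \geq 0$, where $\kappa$ is a jointly measurable disintegration over $\P(\X\times\A)$. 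Since each $\gamma_n(p)$ is dominated by $2^{n+1}\tilde\gamma(p)$, the single kernel $\kappa\big(\cdot \nmid \cdot\,; \tilde\gamma(p)\big)$ is simultaneously a version of the conditional of the action given the state at \emph{every} stage; this makes the homogeneity condition well-posed and Borel (by the same argument you use for the other kernel-equality constraints), and it hands you the stationary policy for the converse direction: patch $\kappa\big(\cdot\nmid x;\tilde\gamma(p)\big)$ with a fixed selection on the universally measurable set of states where the control constraint fails (adding, for $\S'_s$, the Borel set where it is not Dirac). Without this or an equivalent device, your proof establishes the theorem only for $\S$, $\S_m$, and their nonrandomized counterparts.
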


We prove this theorem in Section~\ref{sec-proof-1}. There we also consider the strategic measures induced by policies with certain structural constraints and give a more general result (cf.\ Prop.~\ref{prp-Sc}) that entails the conclusions of the above theorem for $\S$ and $\S_m$.

\begin{rem} \rm \label{rmk-strm}
We make a few comments on the preceding theorem and related earlier results.\\
(a) For the MDP model with a Borel $\Gamma$ and Borel measurable policies, the Borel measurability of $\S$ was shown in Strauch \cite[Lem.\ 7.2]{Str-negative} and in Dynkin and Yushkevich \cite[Secs.~3.5, 3.6, 5.5]{DyY79}; the Borel measurability of $\S'$ was implied by the results of Blackwell \cite{Blk76}; and the Borel measurability of the other sets involved in Theorem~\ref{thm-strat-m} were established by Feinberg \cite{Fei96}. All these results were summarized in Feinberg \cite[Thm.\ 3.2]{Fei96}. A small difference between \cite[Thm.\ 3.2]{Fei96} and the part of Theorem~\ref{thm-strat-m} for the case of a Borel $\Gamma$ is worth noting: in our case $\Gamma$ need not admit a Borel measurable selection and hence a Borel measurable policy need not exist. (By the Blackwell--Ryll-Nardzewski selection theorem \cite[Thm.~2]{BlR63}, if $\Gamma$ is Borel, then $\Gamma$ admits a Borel measurable selection if and only if there exists a Borel measurable stationary policy.)

\smallskip
\noindent (b) Suppose $\Gamma$ admits a Borel measurable selection, so that there is at least one Borel measurable policy $\mu^o \in \Pi_s'$. Then the sets $\S_\star, \S'_\star$ in Theorem~\ref{thm-strat-m} all coincide with their respective subsets of strategic measures induced by Borel measurable policies. This follows from the fact that w.r.t.\ any given finite measure on its domain space, a universally measurable function coincides with some Borel measurable function almost everywhere \cite[Lems.\ 7.27, 7.28(c)]{bs}. More specifically, using this fact and the policy $\mu^o$, for any initial distribution $p_0$ and policy $\pi : = \{\mu_n\}_{n \geq 0} \in \Pi$, one can modify the stochastic kernels $\mu_n$ one by one to construct a Borel measurable policy $\tilde \pi$ such that $\rho_{p_0}[\tilde \pi] = \rho_{p_0}[\pi]$ and $\tilde \pi$ and $\pi$ belong to the same class of policies (any class $\Pi_\star$ or $\Pi_\star'$ defined in Section~\ref{sec-2.2}).
However, for optimal control, universally measurable policies are still essential in this case; cf.\ Remark~\ref{rmk-opt}(b). 

\smallskip
\noindent (c) Let $\Delta \subset \P(\X) \times \big(\P(\X \times \A)\big)^\infty$ consist of all those sequences $(p_0, \gamma_0, \gamma_1, \ldots)$ such that $p_0 \in \P(\X)$ and for some $\pi \in \Pi_m$, $\gamma_n \in \P(\X \times \A)$ is the marginal distribution of $(x_n, a_n)$ w.r.t.\ $\Pr^\pi_{p_0}$ for all $n \geq 0$. It has been shown by Shreve and Bertsekas \cite[Lem.\ 1]{ShrB79} (see also \cite[Lem.\ 9.1]{bs}) that the set $\Delta$ is analytic; their proof also shows that $\Delta$ is Borel if $\Gamma$ is Borel. As to the relation between $\Delta$ and strategic measures, we can show that $\S_m$ is Borel-isomorphic to $\Delta$, whereas $\S_m'$, $\S_s$, and $\S_s'$ are Borel-isomorphic to certain subsets of $\Delta$. 
\qed
\end{rem}

Many optimal control problems can be equivalently formulated as optimization problems on strategic measures. 
We now give some results that will be useful for constructing (nearly) optimal policies from (nearly) optimal solutions of those optimization problems.

Let $\S_\star \in \{\S, \S_m, \S_s\}$. Define sets ${\tilde \S}_\star, {\tilde \S}^{'}_\star$ by
\begin{align*}
 {\tilde \S}_\star & : = \big\{ (x, p) \in \X \times \P(\Omega) \,\big|\,  p \in \S_\star, \, p_0(p) = \delta_x \big\},  & 
{\tilde \S}^{'}_\star & : = \big\{ (x, p) \in \X \times \P(\Omega) \,\big|\,  p \in \S'_\star, \, p_0(p) = \delta_x \big\},
\end{align*} 
where $p_0(p)$ denotes the marginal distribution of $x_0$ w.r.t.\ $p$.
For $x \in \X$, denote by $\tilde \S_\star(x)$ the $x$-section of $\tilde \S_\star$; i.e., 
$\tilde \S_\star(x) : = \{ p \in \P(\Omega) \mid (x, p) \in \tilde \S_\star \} = \{ p \in \S_\star \mid p_0(p) = \delta_x \}.$
Similarly, the $x$-section of $\tilde \S'_\star$ is denoted by $\tilde \S'_\star(x)$.

\begin{lemma} \label{lem-ext-set}
The sets $\tilde \S_\star$ and $\tilde \S'_\star$ are analytic; they are Borel if $\Gamma$ is Borel.
\end{lemma}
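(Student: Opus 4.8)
===PLAN===

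The plan is to deduce Lemma~\ref{lem-ext-set} directly from Theorem~\ref{thm-strat-m}, which already establishes that each $\S_\star$ (and $\S'_\star$) is analytic, and Borel whenever $\Gamma$ is Borel. The key observation is that $\tilde\S_\star$ is obtained from $\S_\star$ by intersecting with a fixed ``slice'' set that links the first coordinate $x \in \X$ to the $x_0$-marginal of the strategic measure $p \in \P(\Omega)$.

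First I would introduce the map $\Phi\colon \P(\Omega) \to \P(\X)$ sending $p$ to its $x_0$-marginal $p_0(p)$. Since projection onto the $x_0$-coordinate is continuous (hence Borel measurable) on $\Omega$, the induced marginalization map $\Phi$ is Borel measurable from $\P(\Omega)$ into $\P(\X)$; this is the standard fact that pushforward under a Borel map is a Borel measurable operation on spaces of probability measures (cf.\ \cite[Sec.~7.4.2]{bs}). Next I would consider the embedding $x \mapsto \delta_x$ from $\X$ into $\P(\X)$, which is a Borel isomorphism onto the Borel set $\D(\X) \subset \P(\X)$ of Dirac measures. Using these, I can write
\begin{equation*}
 \tilde\S_\star = \big\{ (x,p) \in \X \times \P(\Omega) \,\big|\, p \in \S_\star, \ \Phi(p) = \delta_x \big\} = \big(\X \times \S_\star\big) \cap G,
\end{equation*}
where $G := \{ (x,p) \in \X \times \P(\Omega) \mid \Phi(p) = \delta_x\}$ is the set on which the Borel map $(x,p)\mapsto \big(\delta_x, \Phi(p)\big)$ lands in the diagonal of $\P(\X)\times\P(\X)$. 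Since the diagonal is Borel (as $\P(\X)$ is a Borel space, hence separable metrizable) and the map just described is Borel measurable, $G$ is a Borel subset of $\X \times \P(\Omega)$.

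Given this decomposition, the conclusion follows from elementary closure properties: $\X \times \S_\star$ is analytic because $\S_\star$ is analytic and products of analytic sets are analytic, and intersecting an analytic set with a Borel set $G$ yields an analytic set (analytic sets are closed under intersection with Borel, indeed under countable intersection). Thus $\tilde\S_\star$ is analytic. When $\Gamma$ is Borel, Theorem~\ref{thm-strat-m} gives that $\S_\star$ is Borel, so $\X\times\S_\star$ is Borel, and its intersection with the Borel set $G$ is Borel. The identical argument applies verbatim with $\S'_\star$ in place of $\S_\star$ to handle $\tilde\S'_\star$.

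The main obstacle, though it is more bookkeeping than conceptual, is verifying cleanly that the marginalization map $\Phi$ is Borel measurable and that $G$ is genuinely Borel rather than merely analytic; this rests on the Borel-space structure of $\P(\X)$ and $\P(\Omega)$ under the weak topology and on the fact that evaluation of a measure against a fixed Borel set (or integration against a fixed bounded continuous function) is Borel measurable in the measure argument. Once these standard measure-theoretic facts are in hand, the rest is a routine application of the stability of the analytic and Borel classes under products and under intersection, so no delicate descriptive-set-theoretic work beyond Theorem~\ref{thm-strat-m} is required.
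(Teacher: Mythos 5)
Your proof is correct, and it takes a genuinely different (and slightly more elementary) route than the paper. The paper expresses $\tilde \S_\star$ as a preimage of an image: with $\psi_1 : p \mapsto (p_0(p), p)$ and $\psi_2 : (x,p) \mapsto (\delta_x, p)$, it writes $\tilde \S_\star = \psi_2^{-1}\big(\psi_1(\S_\star)\big)$ and then invokes two facts — that Borel-measurable images and preimages of analytic sets are analytic \cite[Prop.~7.40]{bs}, and, for the Borel case, the nontrivial Lusin--Souslin-type theorem that images of Borel sets under \emph{one-to-one} Borel maps are Borel \cite[Sec.~I.3, Cor.~3.3]{Par67}. Your decomposition $\tilde \S_\star = (\X \times \S_\star) \cap G$, with $G = \{(x,p) \mid \Phi(p) = \delta_x\}$ the preimage of the diagonal of $\P(\X) \times \P(\X)$ under the Borel (indeed continuous) map $(x,p) \mapsto (\delta_x, \Phi(p))$, sidesteps the injective-image theorem entirely: since $\P(\X)$ is separable metrizable, the diagonal is closed, so $G$ is Borel, and you then need only the closure of the analytic class under products and under intersection with Borel sets, and the closure of the Borel class under the same operations. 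What each approach buys: yours rests on strictly more elementary descriptive-set-theoretic inputs, which is a genuine simplification for the Borel half of the claim; the paper's formulation via the homeomorphisms $\psi_1, \psi_2$ matches the style it reuses in later proofs (e.g., for $\tmmS$ and $\hmmS$ in Section~\ref{sec-minmax-proof}), so the two arguments are interchangeable here, with yours arguably preferable if one wants to minimize the machinery cited.
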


\begin{proof}
The mapping $\psi_1 : p \mapsto (p_0(p), p)$ from $\P(\Omega)$ into $\P(\X) \times \P(\Omega)$ and the mapping $\psi_2 : (x, p) \mapsto (\delta_x, p)$ from $\X \times \P(\Omega)$ into $\P(\X) \times \P(\Omega)$ are homeomorphisms.  
We can express the sets $\tilde \S_\star$ and $\tilde \S'_\star$ as $\tilde \S_\star = \psi_2^{-1}\big( \psi_1(\S_\star)\big)$ and $\tilde \S'_\star = \psi_2^{-1}\big( \psi_1(\S'_\star)\big)$. 
By \cite[Prop.~7.40]{bs}, images and preimages of analytic sets are analytic under Borel measurable mappings;  
by \cite[Sec.\ I.3, Cor.\ 3.3]{Par67}, under one-to-one Borel measurable mappings, images of Borel sets are Borel. 
Therefore, $\tilde \S_\star$ and $\tilde \S'_\star$ are analytic (Borel) if  $\S_\star$ and $\S'_\star$ are analytic (Borel). We then obtain the desired conclusion from Theorem~\ref{thm-strat-m}.
\end{proof} 

\begin{prop} \label{prp-ummap-pol}
Let $\S_\star \in \{\S, \S_m, \S_s\}$. Let $\zeta : \X \to \P(\Omega)$ be a universally measurable mapping such that  $\zeta(x) \in \tilde \S_\star(x)$ for all $x \in \X$. 
Then there exists a policy $\pi \in \Pi$ such that: 
\begin{enumerate}[leftmargin=0.7cm,labelwidth=!]
\item[\rm (i)] $\rho_x[\pi] =  \zeta(x)$ for all $x \in \X$; 
\item[\rm (ii)] for $\S_\star = \S_m$ or $\S_s$, $\pi$ is semi-Markov or semi-stationary, respectively.
\end{enumerate}
If, in addition, $\zeta$ is such that $\zeta(x) \in \tilde \S'_\star(x)$ for all $x \in \X$, then $\pi$ can be taken to be nonrandomized.
\end{prop}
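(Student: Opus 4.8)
My plan is to build $\pi$ by disintegrating each strategic measure $\zeta(x)$ into its successive conditional laws and then gluing these disintegrations across the initial state, which is available inside every history as the coordinate $x_0$. The key structural observation is that for any $p \in \S_\star$ the one-step transition from $(x_n,a_n)$ to $x_{n+1}$ is forced to be the fixed kernel $q$, so the only ``free'' ingredients of a strategic measure are the conditional laws of $a_n$ given $h_n$, and these are exactly the kernels of a policy inducing $p$ (cf.\ the construction of $\Pr^\pi_{p_0}$ via \cite[Prop.~7.45]{bs}). Accordingly, for $p \in \P(\Omega)$ I would let $R_n(da_n \nmid p, h_n)$ denote the conditional distribution of $a_n$ given $h_n$ under $p$, obtained from a measurable disintegration of the $H'_n$-marginal of $p$; such an $R_n$ can be taken to be a \emph{Borel} measurable stochastic kernel on $\A$ given $\P(\Omega) \times H_n$ (a parametrized conditional distribution, cf.\ \cite[Chap.~7]{bs}).

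With $R_n$ in hand, I would define the candidate policy by
\[
  \mu_n(da_n \nmid h_n) := R_n\big(da_n \nmid \zeta(x_0), h_n \big), \qquad h_n = (x_0, a_0, \ldots, x_n) \in H_n .
\]
Since $\zeta$ is universally measurable and $h_n \mapsto x_0$ is continuous, the map $h_n \mapsto (\zeta(x_0), h_n)$ is universally measurable; composing with the Borel kernel $R_n$ shows each $\mu_n$ is a universally measurable stochastic kernel on $\A$ given $H_n$. By construction, for every fixed $x$ the kernel $\mu_n(\cdot \nmid h_n)$ agrees with the $\zeta(x)$-conditional law of $a_n$ given $h_n$ for $\zeta(x)$-almost every $h_n$ (all of which have $x_0 = x$). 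Two a.e.-to-everywhere gaps then need repair. First, \eqref{eq-control-constraint} holds only $\zeta(x)$-a.e., since $a_n \in A(x_n)$ merely holds $\zeta(x)$-a.s.; I would fix a universally measurable nonrandomized stationary policy $\mu^o \in \Pi'_s$ (available from the Jankov--von Neumann theorem applied to $\Gamma$, property (c)) and redefine $\mu_n(\cdot \nmid h_n) := \mu^o(da_n \nmid x_n)$ on the universally measurable set $\{ h_n \mid \mu_n(A(x_n) \nmid h_n) < 1 \}$. This restores \eqref{eq-control-constraint} everywhere, preserves universal measurability, and changes nothing on a set of full $\zeta(x)$-measure for each $x$. (That $h_n \mapsto \mu_n(A(x_n)\nmid h_n)$ is universally measurable follows because $A(x_n)$ is analytic and such integrals are universally measurable, a universally measurable counterpart of properties (a)--(b).)

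To obtain (i), I would compare finite-dimensional distributions: $\rho_x[\pi]$ and $\zeta(x)$ share the initial law $\delta_x$, the forced transition kernel $q$, and --- off a null set at each stage --- the same action-kernels, so an induction on $n$ together with the uniqueness in \cite[Prop.~7.45]{bs} gives $\rho_x[\pi] = \zeta(x)$ for all $x$. For (ii), when $\S_\star = \S_m$ each $\zeta(x)$ comes from a Markov policy, so its conditional law of $a_n$ given $h_n$ coincides $\zeta(x)$-a.e.\ with a kernel in $x_n$ alone; performing the disintegration on the $(x_n,a_n)$-marginal then yields $R_n(da_n \nmid p, x_n)$, and the glued kernel $\mu_n(da_n \nmid h_n)$ depends only on $(x_0, x_n)$, i.e.\ $\pi$ is semi-Markov. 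When $\S_\star = \S_s$ the same kernel serves for every $n$, so $\mu_n(da_n \nmid h_n) = \tilde \mu(da_n \nmid x_0, x_n)$ for a single $\tilde\mu$ and $\pi$ is semi-stationary; the residual dependence on $x_0$ simply records that different initial states may use different Markov/stationary policies. For the nonrandomized claim, each $\zeta(x)$ now arises from a nonrandomized policy, so its conditional law of $a_n$ given $h_n$ is Dirac $\zeta(x)$-a.e.; hence $\mu_n(\cdot \nmid h_n)$ is already Dirac off a null set, and patching the complementary (universally measurable) set with the nonrandomized $\mu^o$ makes $\pi$ nonrandomized while keeping (i)--(ii).

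I expect the main obstacle to be the measurable bookkeeping rather than any single deep idea: securing the conditional action-distributions as a stochastic kernel that is jointly (universally) measurable in the parameter $\zeta(x)$ and the history $h_n$, and then upgrading the almost-everywhere statements --- the control constraint, and the Dirac property in the nonrandomized case --- to hold \emph{everywhere} without spoiling either the measurability of $\{\mu_n\}$ or the identity $\rho_x[\pi] = \zeta(x)$. The fixed Jankov--von Neumann selection $\mu^o$ is the device that reconciles these everywhere-requirements with the a.e.-validity coming from the disintegration.
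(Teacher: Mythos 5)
Your construction for $\S_\star = \S$ and $\S_\star = \S_m$ (including the nonrandomized refinements) is essentially the paper's own proof: take conditional action-kernels that are jointly Borel measurable in the history and in the measure $p$ (the paper's $\nu_n(da_n \mid h_n; p)$, $\hat\nu_n(da_n \mid \hat h_n; p)$, obtained from \cite[Cor.\ 7.27.1]{bs}), compose them with $\zeta$ evaluated at the coordinate $x_0$ of the history, patch the universally measurable sets where the control constraint (or the Dirac property) fails using a fixed Jankov--von Neumann selection $\mu^o \in \Pi'_s$, and verify $\rho_x[\pi] = \zeta(x)$ by the finite-dimensional-distribution argument (the paper's Lemma~\ref{lem-gen2}). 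That part is sound.

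The gap is in the semi-stationary case $\S_\star = \S_s$ (and $\S'_s$). You assert that ``the same kernel serves for every $n$,'' but the stage-wise disintegrations $R_n(da_n \mid p, x_n)$ of the $(x_n,a_n)$-marginals $\gamma_n(p)$ are each determined only up to a null set of the corresponding $x_n$-marginal, and these marginals---hence the admissible versions---vary with $n$. Nothing in your construction forces the chosen versions to coincide across stages, so what you actually produce is a family of kernels $\mu_n(da_n \mid x_0, x_n)$ that may still depend on $n$: a semi-Markov policy, not a semi-stationary one. The paper closes exactly this hole with a separate device: it forms the mixture $\tilde\gamma(p) := \sum_{n \geq 0} 2^{-n-1}\gamma_n(p)$ of all the stage marginals and uses the single kernel $\kappa\big(da \mid x; \tilde\gamma(p)\big)$; since each $\gamma_n(p)$ is absolutely continuous w.r.t.\ $\tilde\gamma(p)$, this one kernel is simultaneously a version of the conditional action law at every stage whenever $p \in \S_s$ (this is the content of Lemma~\ref{lem-Ss}, via the set $\widetilde M$), and gluing it with $\zeta(x_0)$ yields a genuinely stage-independent $\tilde\mu(da \mid x_0, x_n)$. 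Your proof needs this mixture-of-marginals argument (or an equivalent simultaneous-version construction) to deliver conclusion (ii) for $\S_s$ and its nonrandomized variant.
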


The proof of this result is given in Section~\ref{sec-proof-2}.
A similar result for partially observable problems will be given later in Section~\ref{sec-pomdp} (see Prop.~\ref{prp-Si-pol}). 

The next theorem is similar to Feinberg \cite[Thm.\ 1]{Fei82a} for MDPs with Borel measurable policies. It represents the strategic measures of randomized policies as ``mixtures'' of the strategic measures of nonrandomized policies, and will be important in studying the optimality of nonrandomized polices. We prove this theorem and a more general version of it in Section~\ref{sec-proof-3}.
The proof is similar to that of \cite[Thm.\ 1]{Fei82a} and makes use of a result in Gikhman and Skorokhod~\cite[Lem.\ 1.2]{GiS79}, in addition to the properties of universally measurable sets and functions.

Let $\bar \lambda$ be the countable product of Lebesgue measures on $[0,1]$. 

\begin{theorem} \label{thm-rep-nonrand}
For each $\pi \in \Pi$, there exists a family of nonrandomized policies $\bar f(\bar \theta) \in \Pi'$, parametrized by $\bar \theta \in [0,1]^\infty$, such that
\begin{enumerate}[leftmargin=0.7cm,labelwidth=!]
\item[\rm (i)] the mapping $(p_0, \bar \theta) \mapsto \rho_{p_0}\big[\bar f(\bar \theta) \big]$ from $\P(\X) \times [0,1]^\infty$ into $\P(\Omega)$ is universally measurable; 
\item[\rm (ii)] for all $p_0 \in \P(\X)$,
\begin{equation} \label{eq-rep-nonrand}
   \Pr_{p_0}^\pi( E) = \int_{[0,1]^\infty} \! \Pr_{p_0}^{\bar f(\bar \theta)}( E) \, \bar \lambda (d \bar \theta), \qquad \forall \, E \in \U(\Omega).
\end{equation}
\end{enumerate}
Moreover, if $\pi$ is Markov (semi-Markov), the policies $\bar f(\bar \theta)$ are Markov (semi-Markov) as well.
\end{theorem}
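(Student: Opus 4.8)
The plan is to externalize the randomization in $\pi = (\mu_0, \mu_1, \ldots)$ through one uniform variable per stage and to read off a nonrandomized policy once these variables are frozen. Since $\A$ is a Borel space, fix a Borel isomorphism $\iota$ of $\A$ onto a Borel subset of $[0,1]$ (cf.\ \cite[Cor.~7.16.1]{bs}), and for each $n$ let $F_{h_n}(t) := \mu_n\big(\{a : \iota(a) \le t\} \nmid h_n\big)$ be the cumulative distribution function of the image of $\mu_n(\cdot \nmid h_n)$; for each fixed $t$ this is universally measurable in $h_n$. I would then set $g_n(h_n, \theta_n) := \iota^{-1}\big(\inf\{ t \in \Q \cap [0,1] : F_{h_n}(t) \ge \theta_n\}\big)$, the quantile transform underlying Gikhman and Skorokhod \cite[Lem.~1.2]{GiS79}. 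This explicit form delivers two measurability facts at once, both of which are needed: the map $(h_n, \theta_n) \mapsto g_n(h_n, \theta_n)$ is jointly universally measurable, and, because each $\{h_n : F_{h_n}(t) \ge \theta_n\}$ is universally measurable and the infimum is a countable operation over rationals, for every fixed $\theta_n$ the section $h_n \mapsto g_n(h_n, \theta_n)$ is universally measurable. By construction $\lambda\{\theta_n : g_n(h_n, \theta_n) \in B\} = \mu_n(B \nmid h_n)$ for all $B \in \B(\A)$, where $\lambda$ is Lebesgue measure on $[0,1]$. Finally, since $\mu_n(A(x_n) \nmid h_n) = 1$, the set $\{\theta_n : g_n(h_n, \theta_n) \notin A(x_n)\}$ is $\lambda$-null; redefining $g_n(h_n, \theta_n)$ there to equal $\psi^o(x_n)$ for a fixed analytically measurable selection $\psi^o$ of $\Gamma$ (available by property (c)) enforces $g_n(h_n, \theta_n) \in A(x_n)$ for every $\theta_n$ without altering the distributional identity.

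For $\bar\theta = (\theta_0, \theta_1, \ldots) \in [0,1]^\infty$ I would define $\bar f(\bar\theta) := (f_0^{\theta_0}, f_1^{\theta_1}, \ldots)$, where $f_n^{\theta_n}$ is the deterministic kernel $\delta_{g_n(h_n, \theta_n)}(da_n)$. The sectional measurability and the constraint fix-up above guarantee that each $\bar f(\bar\theta)$ is a bona fide nonrandomized policy in $\Pi'$. If $\pi$ is Markov (semi-Markov), then $\mu_n(\cdot \nmid h_n)$, and hence $F_{h_n}$ and $g_n$, depend on $h_n$ only through $x_n$ (through $(x_0, x_n)$), so $\bar f(\bar\theta)$ inherits the same structure; this yields the final assertion of the theorem.

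To prove the mixture identity (ii) I would first establish it for cylinder events $E$ depending on $(x_0, a_0, \ldots, x_N, a_N)$. On such an event both $\Pr_{p_0}^\pi(E)$ and $\Pr_{p_0}^{\bar f(\bar\theta)}(E)$ are finite iterated integrals assembled from $p_0$, the Borel kernel $q$, and the respective stage kernels. Integrating $\Pr_{p_0}^{\bar f(\bar\theta)}(E)$ against $\bar\lambda(d\bar\theta)$ and applying Fubini stage by stage, the integral over $\theta_n \in [0,1]$ of the deterministic kernel $\delta_{g_n(h_n, \theta_n)}$ collapses---by the distributional identity of the first paragraph---to $\mu_n(da_n \nmid h_n)$, thereby reproducing $\Pr_{p_0}^\pi(E)$ exactly. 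A monotone class argument extends the identity from cylinders to all $E \in \B(\Omega)$. For $E \in \U(\Omega)$ one sandwiches $E$ between Borel sets $B_1 \subset E \subset B_2$ with $\rho_{p_0}[\pi](B_2 \setminus B_1) = 0$; the Borel identity forces $\rho_{p_0}[\bar f(\bar\theta)](B_2 \setminus B_1) = 0$ for $\bar\lambda$-a.e.\ $\bar\theta$, and integrating the common value on $B_1$ recovers \eqref{eq-rep-nonrand} for the completed measures.

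The remaining task, which I expect to be the main obstacle, is the universal measurability of $(p_0, \bar\theta) \mapsto \rho_{p_0}[\bar f(\bar\theta)]$ in (i). Since $\P(\Omega)$ is a Borel space, it suffices to show that $(p_0, \bar\theta) \mapsto \rho_{p_0}[\bar f(\bar\theta)](E)$ is universally measurable for every $E$ in the generating algebra of cylinder sets. For such $E$ this probability is once more a finite iterated integral in which $q$ is Borel while each stage kernel is only universally measurable in $(h_n, \theta_n)$. Propagating measurability through these compositions requires the universally measurable analogue of the lower-semianalyticity statement in property (b): integrating a universally measurable integrand against the Borel kernel $q$, and against the deterministic kernels $\delta_{g_n}$, preserves universal measurability in the parameters $(p_0, \bar\theta)$. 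This is precisely where the explicit quantile construction earns its keep, since it supplies joint measurability and measurable sections simultaneously; a final monotone class argument lifts the conclusion from the generating cylinders to all Borel $E$, establishing (i).
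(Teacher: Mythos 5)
Your proposal is correct and follows essentially the same route as the paper's own proof (its Lemmas on $f_n$, on joint measurability, and on the mixture identity): a stage-wise realization of each $\mu_n(\cdot \nmid h_n)$ as the image of Lebesgue measure under functions $f_n(\theta_n,\cdot)$ --- where the paper invokes Gikhman--Skorokhod's Lemma 1.2 as a black box, you unpack the same lemma as an explicit quantile transform --- followed by the same fix-up on the $\lambda$-null exceptional set via an analytically measurable selection of $\Gamma$, the same cylinder-set/Fubini computation with a Dynkin-class (monotone class) extension for both the joint universal measurability in (i) and the identity in (ii), and the same passage from $\B(\Omega)$ to $\U(\Omega)$ by completion. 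The only cosmetic caveat is that your quantile value may fall outside $\iota(\A)$, so $\iota^{-1}$ needs an arbitrary measurable extension there before your constraint fix-up, but the null-set modification you already perform absorbs this.
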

 
\begin{rem} \rm
The representation (\ref{eq-rep-nonrand}) for $\Pr_{p_0}^\pi$ involves the probability measure $\bar \lambda$ on $[0,1]^\infty$. Alternatively, similarly to Feinberg \cite[Thm.\ 5.2]{Fei96}, we can introduce a probability measure $\eta$ on $\U\big(\P(\Omega)\big)$ to express $\Pr_{p_0}^\pi$ as
\begin{equation}
     \Pr_{p_0}^\pi(E) = \int_{\P(\Omega)} \! p(E) \, \eta(d p), \qquad \forall \, E \in \U(\Omega),
\end{equation}
where $\eta$ depends on both $p_0$ and $\pi$. In particular, let $\gamma_{p_0} : [0,1]^\infty \to \P(\Omega)$ with $\gamma_{p_0}(\bar \theta) : = \rho_{p_0}[ \bar f(\bar \theta)]$. By Theorem~\ref{thm-rep-nonrand}, $\gamma_{p_0}$ is universally measurable and we can let $\eta$ be the image measure of $\bar \lambda$ under $\gamma_{p_0}$: $\eta = \bar \lambda \circ \gamma^{-1}_{p_0}$. 
Moreover, by Theorem~\ref{thm-rep-nonrand}, $\eta(\S')=1$, and if $\pi$ is Markov, then $\eta(\S'_m) = 1$.\qed
\end{rem}

By combining the preceding results with \cite[Props.\ 7.47, 7.50]{bs}, we obtain the following theorem about optimization problems on strategic measures.

\begin{theorem} \label{thm-opt}
Let $(\S_\star, \Pi_\star) = (\S, \Pi), (\S_m, \Pi_{sm})$, or  $(\S_s, \Pi_{ss})$. 
Let $\phi : \tilde \S_\star \to \bar \R$ be lower semianalytic. 
Then the following hold:
\begin{enumerate}[leftmargin=0.7cm,labelwidth=!]
\item[\rm (i)] The function $\phi^*(x) : =  \inf_{p \in \tilde\S_\star(x)} \phi(x, p)$, $x \in \X$, is lower semianalytic.
\item[\rm (ii)] For each $\epsilon > 0$, there exists a policy $\pi^* \in \Pi_\star$ such that 
\begin{align}
     \phi(x, \rho_x [ \pi^* ])  & = \phi^*(x), \qquad \forall \,  x \in E^* : = \Big\{ x \in \X \,\big|\, \argmin_{p \in \tilde\S_\star(x)} \phi(x, p) \not= \varnothing \Big\};  \label{eq-um-minimizer-1} \\
 \phi(x, \rho_x [ \pi^* ]) & \leq  \begin{cases}
        \phi^*(x) + \epsilon & \text{if} \ \phi^*(x) > - \infty, \\
        -1/\epsilon & \text{if} \ \phi^*(x) = - \infty,
        \end{cases} \qquad \ \ \  \forall \, x \in \X \setminus E^*.   \label{eq-um-minimizer-2}
\end{align}
\item[\rm (iii)]  The policy $\pi^*$ in (ii) can be taken to be nonrandomized if the function $\phi$ satisfies the condition $\phi^*(x) = \inf_{p \in \tilde\S'_\star(x)} \phi(x, p)$ for all $x \in \X$. In particular, for $\S_\star = \S$ or $\S_m$, this condition holds if $\phi$ is such that for each $\pi \in \Pi$ or $\Pi_m$, respectively, 
\begin{equation} \label{suff-cond-nonrand}
  \phi(x, \rho_x[\pi] ) \geq \int_{[0,1]^\infty} \! \phi \big(x, \rho_x[\bar f(\bar \theta)] \big) \, \bar \lambda (d \bar \theta), \qquad \forall \, x \in \X,
\end{equation}  
where the function $\bar f(\bar \theta)$ associated with $\pi$ is as given in Theorem~\ref{thm-rep-nonrand}.
\end{enumerate}
\end{theorem}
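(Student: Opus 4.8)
The plan is to treat partial minimization of a lower semianalytic function, i.e.\ property~(d) above (\cite[Props.~7.47, 7.50]{bs}), as the single engine driving all three parts, combined with the structural facts already in hand. For part~(i) I would apply property~(d) with ground space $\X$, parameter space $\P(\Omega)$, domain $D = \tilde\S_\star$, and $f = \phi$. By Lemma~\ref{lem-ext-set} the domain $\tilde\S_\star$ is analytic, and $\phi$ is lower semianalytic by hypothesis; moreover $\text{proj}_\X(\tilde\S_\star) = \X$, because every state $x$ admits at least one strategic measure (take any policy started from $\delta_x$), so each section $\tilde\S_\star(x)$ is nonempty. Property~(d) then yields at once that $\phi^*(x) = \inf_{p \in \tilde\S_\star(x)} \phi(x,p)$ is lower semianalytic, which is exactly~(i).

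For part~(ii) I would invoke the selection half of property~(d) for the same $D$ and $f$. This produces, for each $\epsilon > 0$, a universally measurable map $\psi : \X \to \P(\Omega)$ with $\psi(x) \in \tilde\S_\star(x)$ for all $x$, attaining $\phi^*(x)$ exactly on $E^*$ and within $\epsilon$ (or below $-1/\epsilon$) off $E^*$. The key step is then to feed $\zeta := \psi$ into Proposition~\ref{prp-ummap-pol}: since $\zeta$ is universally measurable with $\zeta(x) \in \tilde\S_\star(x)$, there is a policy $\pi^* \in \Pi$ with $\rho_x[\pi^*] = \zeta(x)$ for every $x$, and Proposition~\ref{prp-ummap-pol}(ii) places $\pi^*$ in $\Pi_{sm}$ or $\Pi_{ss}$ in the Markov and stationary cases. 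Substituting $\phi(x,\rho_x[\pi^*]) = \phi(x,\zeta(x))$ into the two estimates from property~(d) gives \eqref{eq-um-minimizer-1} and \eqref{eq-um-minimizer-2}.

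For part~(iii), under the hypothesis $\phi^*(x) = \inf_{p\in\tilde\S'_\star(x)}\phi(x,p)$ I would rerun this argument with $D = \tilde\S'_\star$ (again analytic by Lemma~\ref{lem-ext-set}) in place of $\tilde\S_\star$. Property~(d) returns a universally measurable selection $\psi'$ with $\psi'(x)\in\tilde\S'_\star(x)$, and the nonrandomized clause of Proposition~\ref{prp-ummap-pol} converts it into a nonrandomized $\pi^* \in \Pi'_\star$; since the attained value $\inf_{p\in\tilde\S'_\star(x)}\phi(x,p)$ equals $\phi^*$ by hypothesis, the approximate bound \eqref{eq-um-minimizer-2} holds verbatim. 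The delicate point — and what I expect to be the main obstacle — is the \emph{exact}-attainment statement \eqref{eq-um-minimizer-1}: property~(d) over $\tilde\S'_\star$ delivers equality only on the nonrandomized argmin set $E^{*\prime} := \{x : \argmin_{p\in\tilde\S'_\star(x)}\phi(x,p)\neq\varnothing\}$, and a priori $E^{*\prime}$ could be a proper subset of $E^*$, since mixing may strictly lower $\phi$ at the optimum even when it does not lower the infimum.

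To close this gap I would use the mixture representation of Theorem~\ref{thm-rep-nonrand}. The inclusion $E^{*\prime}\subseteq E^*$ is immediate because $\tilde\S'_\star(x)\subseteq\tilde\S_\star(x)$ and the two infima agree. For the reverse, fix $x\in E^*$ with an optimal $p^* = \rho_x[\pi]$; condition~\eqref{suff-cond-nonrand} gives $\phi^*(x) = \phi(x,p^*) \geq \int_{[0,1]^\infty}\phi\big(x,\rho_x[\bar f(\bar\theta)]\big)\,\bar\lambda(d\bar\theta)$, while every $\rho_x[\bar f(\bar\theta)]\in\tilde\S'_\star(x)$ forces $\phi\big(x,\rho_x[\bar f(\bar\theta)]\big)\geq\phi^*(x)$, so the integrand must equal $\phi^*(x)$ for $\bar\lambda$-almost every $\bar\theta$. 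This exhibits a nonrandomized minimizer at $x$, proving $E^*\subseteq E^{*\prime}$ and hence $E^* = E^{*\prime}$; the same computation, taking the infimum over $\pi$, shows that \eqref{suff-cond-nonrand} implies the hypothesis of~(iii). With $E^* = E^{*\prime}$, the nonrandomized selection restores exact attainment on all of $E^*$, completing the proof.
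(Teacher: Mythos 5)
Your treatment of parts (i) and (ii) coincides with the paper's proof: partial minimization via \cite[Props.~7.47, 7.50]{bs} over the analytic set $\tilde\S_\star$, followed by Proposition~\ref{prp-ummap-pol} to convert the universally measurable selection into a policy of the appropriate class. Where you diverge is part (iii), and there you are in fact \emph{more} careful than the paper. The paper's proof simply reruns the selection argument with $\tilde\S'_\star$ in place of $\tilde\S_\star$ and asserts \eqref{eq-um-minimizer-1}--\eqref{eq-um-minimizer-2}; it silently identifies the exact-attainment set produced by that rerun, namely $E^{*\prime} := \{x \in \X \mid \argmin_{p\in\tilde\S'_\star(x)}\phi(x,p)\neq\varnothing\}$, with the set $E^*$ appearing in the statement. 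You correctly single this out as the delicate point, and your mixture argument via Theorem~\ref{thm-rep-nonrand} --- integrand $\geq \phi^*(x)$, integral $\leq \phi^*(x)$, hence a.e.\ equality, hence a nonrandomized exact minimizer --- is a genuine addition not present in the paper, and it does close the gap under \eqref{suff-cond-nonrand} at every state where $\phi^*(x)$ is finite.

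Two caveats. First, your a.e.-equality step fails when $\phi^*(x) = -\infty$: the lower bound on the integrand is then vacuous, and an integral can equal $-\infty$ with an everywhere-finite integrand (e.g., integrand $-1/\theta_0$). At such states the claim itself can fail: take a single state with $\A = [0,1]$, deterministic transitions, and $\phi(x,p) := \int g\, dp$ where $g(\omega) := -1/a_0$ for $a_0>0$ and $g:=0$ otherwise; then \eqref{suff-cond-nonrand} holds with equality by Theorem~\ref{thm-rep-nonrand}(ii), uniform randomization of $a_0$ attains $-\infty$, yet every nonrandomized policy induces a Dirac measure with finite value, so $E^*\not\subseteq E^{*\prime}$. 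Second, your repair invokes \eqref{suff-cond-nonrand}, which is assumed only in the second sentence of (iii); under the bare hypothesis $\phi^*(x) = \inf_{p\in\tilde\S'_\star(x)}\phi(x,p)$ the inclusion $E^*\subseteq E^{*\prime}$ can genuinely fail (let $\phi$ vanish on non-Dirac measures and be strictly positive, with infimum zero, on Dirac ones), so exact attainment on all of $E^*$ is then not provable and your proof of the first sentence remains incomplete. Both defects are inherited from the paper's own statement and proof rather than introduced by you; the merit of your write-up is that it makes visible exactly where the extra hypothesis is needed, whereas the paper's one-line ``replace $\tilde\S_\star$ by $\tilde\S'_\star$'' conceals it.
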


\begin{proof}
Since the set $\tilde \S_\star$ is analytic by Lemma~\ref{lem-ext-set} and the function $\phi$ is lower semianalytic by assumption, part (i) holds by \cite[Prop.\ 7.47]{bs}. Moreover, by \cite[Prop.\ 7.50(b)]{bs}, there exists a universally measurable function $\zeta^* : \X \to \P(\Omega)$ such that its graph lies in $\tilde \S_\star$ and the relations (\ref{eq-um-minimizer-1})-(\ref{eq-um-minimizer-2}) hold with $\zeta^*(x)$ in place of $\rho_x[\pi^*]$.
Letting $\pi^* \in \Pi_\star$ be the policy given by Prop.~\ref{prp-ummap-pol} for the function $\zeta^*$, we obtain part (ii). 

If $\phi^*(x) = \inf_{p \in \tilde\S'_\star(x)} \phi(x, p)$ for all $x \in \X$, then, since the set $\tilde\S'_\star$ is analytic by Lemma~\ref{lem-ext-set}, we can replace $\tilde \S_\star$ with $\tilde \S'_\star$ in the preceding proof so that the graph of $\zeta^*$ lies in $\tilde \S'_\star$. From the last statement of Prop.~\ref{prp-ummap-pol}, we then obtain a nonrandomized policy $\pi^* \in \Pi_\star$ that satisfies (\ref{eq-um-minimizer-1})-(\ref{eq-um-minimizer-2}). 

Finally, for the last statement in part (iii), if $\S_\star = \S$ or $\S_m$, then by Theorem~\ref{thm-rep-nonrand}, $\bar f(\bar \theta) \in \Pi'$ or $\Pi'_m$, respectively, for all $\bar \theta \in [0,1]^\infty$. Hence by assumption (\ref{suff-cond-nonrand}), $\inf_{p \in \tilde\S_\star(x)} \phi(x, p) \geq \inf_{p \in \tilde\S'_\star(x)} \phi(x, p)$ for all $x \in \X$, which implies $\phi^*(x) = \inf_{p \in \tilde\S'_\star(x)} \phi(x, p)$ for all $x \in \X$.
\end{proof} 

\begin{rem} \rm \label{rmk-opt}
We make two comments regarding the preceding theorem and related results for MDPs with Borel measurable policies.\\
(a) For MDPs with Borel $\Gamma$ and Borel measurable policies, optimality results similar to Theorem~\ref{thm-opt}(i)-(ii) were given in
Strauch \cite[Thms.\ 7.1, 8.1]{Str-negative} and Dynkin and Yushkevich \cite[Sec.\ 5.5]{DyY79} for the discounted and total cost criteria, and in Feinberg \cite[Thm.\ 3.1]{Fei82} and \cite[Lem.~4.1, Thm.\ 4.2]{Fei96} for arbitrary Borel measurable numerical criteria;
results related to Theorem~\ref{thm-opt}(iii) were given in Feinberg \cite[Thm.\ 2]{Fei82a} and \cite[Thm.\ 3.2, Cors.\ 3.2, 3.3, 3.5]{Fei82}.
As noted in our introduction, these prior results showed that w.r.t.\ any given finite measure on $\X$, there exist (Borel measurable) randomized or nonrandomized, almost-everywhere $\epsilon$-optimal policies. This is different from Theorem~\ref{thm-opt}(ii) or (iii), in which the policy $\pi^*$ is $\epsilon$-optimal everywhere.

\smallskip
\noindent (b) In the prior works just mentioned, $\Gamma$ is Borel and admits a Borel measurable selection. As discussed earlier in Remark~\ref{rmk-strm}(b), in this case every $\S_\star$ in the preceding theorem coincides with its subset of strategic measures induced by Borel measurable policies. Moreover, the set $\tilde \S_\star$ admits a Borel measurable selection (since the mapping $x \mapsto \rho_x[\mu]$, where $\mu$ is any Borel measurable stationary policy, is a Borel measurable selection of $\tilde \S_\star$). 
However, for the partial minimization problem in the preceding theorem, this ensures only the \emph{feasibility} of at least one Borel measurable policy $\pi$, in the sense that $\rho_x[\pi]$ lies in the feasible set $\tilde \S_\star(x)$ for every state $x$. The level sets of the objective function, $\{ (x, p) \in \tilde \S_\star \mid \phi(x, p) < r \}$, $r \in \R$, need not admit Borel measurable selections even if $\phi$ is Borel measurable. This is the reason that if we only allow Borel measurable policies, in general we can only obtain an almost-everywhere $\epsilon$-optimal policy.
\qed
\end{rem}

As mentioned earlier, the results of this section can be generalized to partially observable problems; see Sections~\ref{sec-pomdp} and \ref{sec-pomdp-proof}. 
In addition, our proof arguments given in Section~\ref{sec-5}, which deal with structured policies, can be adapted and applied to decentralized control problems like those studied in \cite{Yuk20,YuS17} to derive similar results in the universal measurability framework.

\section{Applications and Extensions}  \label{sec-4}

In this section we first specialize the results of Section~\ref{sec-3} to risk-neutral and risk-sensitive MDPs, focusing on performance criteria of the average type. We then extend our analysis to partially observable problems (Section~\ref{sec-pomdp}) and minimax control problems (Section~\ref{sec-minimax}), with the latter being the major extension.

\subsection{MDPs with Risk-Neutral and Risk-Sensitive Average Criteria} \label{sec-4.1}

We will consider various average cost and risk criteria in this subsection and apply Theorem~\ref{thm-opt} to derive optimality results for MDPs under these criteria.
If $J^\star : \Pi \times \X \to \bar \R$ is a cost or risk criterion, w.r.t.\ $J^\star$ and a subset $\Pi_\star$ of policies in $\Pi$,
we define the optimal average cost or risk function by
\begin{equation} \label{def-g-opt}
  g^\star(x) : = \inf_{\pi \in \Pi_\star} J^\star(\pi, x), \qquad x \in \X,
\end{equation}  
and we say a policy $\pi \in \Pi_\star$ is \emph{optimal for state $x$} if $J^\star(\pi, x) = g^\star(x)$,
and \emph{$\epsilon$-optimal for state $x$} (where $\epsilon > 0$) if  
\begin{equation} \label{def-pol-opt}
     J^\star(\pi, x) \leq \begin{cases} 
                g^\star(x) + \epsilon & \text{if} \ g^\star(x) > - \infty, \\
                - \epsilon^{-1} & \text{if} \ g^\star(x) = - \infty.
                \end{cases}   
\end{equation}                
If these relations hold for \emph{all} states $x \in \X$, $\pi$ is called \emph{($\epsilon$-)optimal} w.r.t.\ $J^\star$ and $\Pi_\star$.

\subsubsection{The Risk-Neutral Case} \label{sec-4.1.1}
Let $c^+$ ($c^-$) be the positive (negative) part of the one-stage cost function $c$. 

\begin{definition} \label{def-ac-models}
We say an MDP is in the model class \mdp (\mdn), if for $c^\diamond = c^-$ ($c^\diamond = c^+$),
\begin{equation} \label{eq-ac-mdpn}
   \textstyle{ \E^\pi_x \big[ \sum_{k=0}^n c^\diamond(x_k, a_k) \big] < + \infty, \qquad \forall \, x \in \X, \, \pi \in \Pi, \, n \geq 0.}
\end{equation}
We say an MDP is in the model class \ptmdp (\ptmdn), if $c(\cdot)$ is real-valued and bounded below (above) on $\Gamma$.
\end{definition}

For MDPs in the \mdp or \mdn class, the long-run expected average cost criteria $J^{(i)}, 1 \leq i \leq 4$, given below are all well-defined (i.e., they do not involve $\infty - \infty$). For $\pi \in \Pi, x \in \X$, 
\begin{equation} \label{eq-crt1}
 J^{(1)}(\pi, x) : =   \limsup_{n \to \infty} n^{-1} J_n(\pi, x), \qquad J^{(2)}(\pi, x) : =   \liminf_{n \to \infty} n^{-1}  J_n(\pi, x),
\end{equation}
\begin{equation} \label{eq-crt3}
 J^{(3)}(\pi, x) : =   \limsup_{n \to \infty} \sup_{j \geq 0} n^{-1} J_{n,j}(\pi, x), \quad J^{(4)}(\pi, x) : =   \liminf_{n \to \infty} \inf_{j \geq 0} n^{-1} J_{n,j}(\pi, x),
\end{equation} 
where for $n \geq 1$ and $j \geq 0$, 
$$ J_n(\pi, x) : = \E^\pi_x \big[ \, \textstyle{\sum_{k=0}^{n-1} c(x_k, a_k)} \, \big], \qquad J_{n,j}(\pi, x) : = \E^\pi_x \big[ \, \textstyle{\sum_{k=0}^{n-1} c(x_{k+j}, a_{k+j})} \, \big].$$
($J_n$ is the standard $n$-stage cost function; $J_{n,j}$ instead measures the expected costs incurred from time $j$ to time $j+n-1$.) For the sub-models \ptmdp and \ptmdn, we consider four more well-defined average cost criteria $\tJ^{(i)}, 1 \leq i \leq 4$, which involve average costs along sample paths: for $\pi \in \Pi, x \in \X$,
\begin{align}
   \tJ^{(1)}(\pi, x) & : = \E^\pi_x \Big[ \limsup_{n \to \infty} \tilde c_n
   \Big],  & \tJ^{(2)}(\pi, x) & : = \E^\pi_x \Big[ \liminf_{n \to \infty} \tilde c_n  \Big],    \label{eq-pt-ac1} \\
     \tJ^{(3)}(\pi, x)  & : = \E^\pi_x \Big[ \limsup_{n \to \infty} \sup_{j \geq 0} \tilde c_{n,j} \Big],  &
       \tJ^{(4)}(\pi, x)  & : = \E^\pi_x \Big[ \liminf_{n \to \infty} \inf_{j \geq 0} \tilde c_{n,j} \Big], 
      \label{eq-pt-ac2a}  
\end{align}
where $\tilde c_n : =  n^{-1} \sum_{k=0}^{n-1} c(x_k, a_k)$ and $\tilde c_{n,j} : = n^{-1} \sum_{k=0}^{n-1} c(x_{k+j}, a_{k+j})$.

\begin{rem} \rm
The standard average cost criterion is $J^{(1)}$. Our definitions of the criteria $J^{(i)}, \tJ^{(i)}$, $i = 3, 4$, were motivated by a theorem about Banach limits~\cite[Thm.\ 3.4.1]{Kre85}; see \cite[Remark 2.1]{Yu22} for more details. Regarding the criteria $J^{(i)}, \tJ^{(i)}$, $i = 1, 2$, it has been shown in \cite{Bie87,Fei80} that interestingly, if $\X$ is finite and $c(\cdot)$ is bounded below (the action space can be arbitrary), then the optimal average cost functions w.r.t.\ these criteria and $\Pi$ are identical. Moreover, in this case, there exists a nonrandomized Markov policy that is $\epsilon$-optimal simultaneously w.r.t.\ all these criteria, and under additional compactness/continuity conditions, this policy can be taken to be nonrandomized and stationary. The finiteness of the state space is critical for these results, however. For an infinite state space $\X$, the optimal average cost functions w.r.t.\ these criteria are in general not the same.
\qed
\end{rem}

Consider the case $\Pi_\star = \Pi, \Pi_{sm}$, or $\Pi_{ss}$, and correspondingly let $\S_\star = \S, \S_m$ or $\S_s$, respectively. 
Note that if $\pi$ is a semi-Markov or semi-stationary policy, then for each initial state $x \in \X$, $\rho_x[\pi] \in \S_m$ or $\S_s$, respectively. Let $J^\star$ be any criterion given above. 
Since $J^\star(\pi,x)$ can be expressed as a function of $\Pr^\pi_x$, we can write the optimal average cost function $g^\star$ w.r.t.\ $J^\star$ and $\Pi_\star$ as
\begin{equation} \label{eq-g-strmeas}
  g^\star(x) : = \inf_{\pi \in \Pi_\star} J^\star(\pi, x) =  \inf_{p \in \tilde \S_\star(x)} \phi(p), \qquad x \in \X.
\end{equation}  
Here we define the function $\phi: \P(\Omega) \to \bar \R$ by using the same defining equation for the criterion $J^\star$, except that we replace the expectation w.r.t.\ $\Pr^\pi_x$ by the expectation w.r.t\ $p$. 
For technical convenience, in this definition, we extend $c(\cdot)$ to the entire state-action space by letting $c(\cdot) \equiv +\infty$ outside $\Gamma$, which makes $c(\cdot)$ a lower semianalytic function on $\X \times \A$. (Recall also that by convention $\infty - \infty = - \infty + \infty = \infty$.)

\begin{lemma} \label{lem-ac-lsa}
The function $\phi: \P(\Omega) \to \bar \R$ corresponding to any average cost criterion defined in (\ref{eq-crt1})-(\ref{eq-pt-ac2a}) is lower semianalytic.
\end{lemma}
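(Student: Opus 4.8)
The plan is to show that each $\phi$ arising from the criteria $J^{(i)}, \tilde J^{(i)}$ ($1 \le i \le 4$) is lower semianalytic by building it up from simpler lower semianalytic pieces using the stability properties of the class under the relevant operations. The key observation is that the recounted property (b) gives, for a lower semianalytic $f$ on $X \times Y$ and a Borel kernel $q(dy \nmid x)$, that $x \mapsto \int f(x,y)\,q(dy \nmid x)$ is lower semianalytic; and that lower semianalyticity is preserved under countable suprema (unions of analytic level sets), under $\liminf$ of a sequence (which is a countable $\sup$ of $\inf$'s, hence needs care about the direction), under nonnegative scalar multiplication, and under finite sums. The main work is to decide, for each criterion, \emph{which} combination of these operations is used and to confirm each is legitimate in the extended-real arithmetic with the convention $\infty - \infty = \infty$.

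\textbf{Finite-horizon building blocks.} First I would establish the base case: the map $p \mapsto \int c(x_k,a_k)\,dp$ (the $k$th-stage expected cost under the strategic measure $p \in \P(\Omega)$, viewing $c$ as extended to all of $\X \times \A$ and lower semianalytic there) is lower semianalytic in $p$. Here $c(x_k, a_k)$ is a lower semianalytic function of $\omega \in \Omega$ (a coordinate projection composed with $c$), so this should follow from property (b) applied with the canonical coordinate evaluation, or more directly from the fact that integration of a bounded-below lower semianalytic function against the identity kernel $p \mapsto \int g\,dp$ on $\P(\Omega)$ is lower semianalytic in $p$ (this is essentially \cite[Prop.~7.48]{bs} specialized to the Borel kernel $(p,\omega) \mapsto p(d\omega)$, i.e. the evaluation kernel). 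From this, the finite-horizon cost $\phi_n(p) := \int \sum_{k=0}^{n-1} c(x_k,a_k)\,dp$ and its shifted version $\phi_{n,j}$ are lower semianalytic, being finite sums of such terms (finite sums of lower semianalytic functions are lower semianalytic, the $\infty - \infty = \infty$ convention keeping everything well-defined).

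\textbf{Passing to the limits.} For $J^{(1)}, J^{(2)}$ the functions are $\limsup_n n^{-1}\phi_n$ and $\liminf_n n^{-1}\phi_n$. The $\liminf$ case $J^{(2)}$ is the clean one: $\liminf_n g_n = \sup_m \inf_{n \ge m} g_n$, and a countable supremum of lower semianalytic functions is lower semianalytic (the level set is a countable intersection of analytic sets, hence analytic), while $\inf_{n\ge m}$ of the finitely-many-at-a-time is handled by noting each $\inf_{n \ge m}$ is itself lower semianalytic as a countable infimum (level sets are countable unions). For $J^{(3)}, J^{(4)}$ the extra $\sup_{j}$ / $\inf_{j}$ over the countable index $j \ge 0$ composes with the same $\limsup$/$\liminf$ machinery. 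For the pathwise criteria $\tilde J^{(i)}$ in the \ptmdp/\ptmdn sub-models, $c$ is real-valued and bounded below (above), so $\tilde c_n$ is a genuine lower semianalytic function on $\Omega$; here I would take the $\limsup/\liminf$ \emph{inside} the expectation first, forming a lower semianalytic function $h$ on $\Omega$, and then apply the integration-against-$p$ step once at the end.

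\textbf{Main obstacle.} The delicate point — and the one I would flag as the crux — is the $\limsup$ direction, since $\limsup_n g_n = \inf_m \sup_{n \ge m} g_n$ is an \emph{infimum} over $m$ of lower semianalytic functions, and a countable infimum of lower semianalytic functions need \emph{not} be lower semianalytic in general (that is the upper-semianalytic-friendly direction). Resolving this requires exploiting the specific structure: one must argue that for these Cesàro averages the $\limsup$ can be recast through monotone or boundedness structure (e.g. the boundedness-below assumption in the model classes \mdp/\ptmdp, or rewriting via a countable supremum using the averaging) so that the operation that actually appears is a countable supremum rather than a bare countable infimum. I expect the proof to handle $J^{(1)}, J^{(3)}, \tilde J^{(1)}, \tilde J^{(3)}$ (the $\limsup$-type) separately from their $\liminf$ counterparts, invoking the sign conditions baked into Definition~\ref{def-ac-models} precisely to make the $\limsup$ terms fall into the lower semianalytic class; verifying that these sign conditions are exactly what is needed is where the care lies.
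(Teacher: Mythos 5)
Your proposal goes wrong precisely at the point you flag as the crux. The claim that ``a countable infimum of lower semianalytic functions need not be lower semianalytic'' is false: since analytic sets are closed under both countable unions and countable intersections, the class of lower semianalytic functions is closed under countable infima \emph{and} countable suprema---note $\{ \inf_m h_m < r\} = \cup_m \{ h_m < r\}$ and $\{\sup_m h_m \leq r\} = \cap_m \{h_m \leq r\}$, both analytic. (The genuine asymmetry between lower and upper semianalytic functions arises only for suprema/infima over \emph{uncountable} parameter sets, i.e., partial maximization vs.\ partial minimization.) Consequently $\limsup_n = \inf_m \sup_{n \geq m}$ poses no difficulty at all, and there is no need for the case split you anticipate, nor for invoking the sign conditions of Definition~\ref{def-ac-models}---those conditions serve only to make the criteria well-defined (avoiding $\infty - \infty$ in the control problem), not to rescue measurability. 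Since you leave the $\limsup$-type criteria $J^{(1)}, J^{(3)}, \tJ^{(1)}, \tJ^{(3)}$ unproved pending a resolution of this phantom obstacle, your argument is incomplete for half of the cases; it is also internally inconsistent, because your treatment of the pathwise criteria $\tJ^{(i)}$ already takes a $\limsup$ of the lower semianalytic functions $\tilde c_n$ on $\Omega$, which uses exactly the closure property you doubt.

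The paper's proof is a direct application of this closure: $\sum_{k=0}^{n-1} c(x_{k+j}, a_{k+j})$ is lower semianalytic on $\Omega$ by \cite[Lem.~7.30(4)]{bs}; integrating against $p$ gives a lower semianalytic function of $p$ by \cite[Cor.~7.48.1]{bs}; and nonnegative scaling, $\sup_j$, $\inf_j$, $\limsup_n$, $\liminf_n$ all preserve lower semianalyticity by \cite[Lem.~7.30]{bs}. The same argument covers all eight criteria uniformly. One further small correction: keep the finite sum \emph{inside} the integral (as in the definitions of $J_n$ and $J_{n,j}$) rather than writing $\phi_n$ as a sum of the integrals $\int c(x_k,a_k)\,dp$; with the convention $\infty - \infty = \infty$ the interchange of sum and integral is not automatic, and the lemma concerns the function with the sum inside.
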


\begin{proof}
Consider the case $J^{(3)}$, for example. The corresponding function $\phi$ is given by
$\phi(p) : = \limsup_{n \to \infty} \sup_{j \geq 0} n^{-1} \int_\Omega \textstyle{\sum_{k=0}^{n-1} c(x_{k+j}, a_{k+j})} \, p(d \omega)$. Since $c(\cdot)$ is lower semianalytic by our model assumption, $\phi$ is lower semianalytic by \cite[Lem.\ 7.30, Cor.\ 7.48.1]{bs}.
The same argument applies to the other criteria under consideration.
\end{proof} 

\begin{theorem} \label{thm-ac-basic}
Consider these two cases of an average-cost MDP:
\begin{enumerate}[leftmargin=0.7cm,labelwidth=!]
\item[\rm 1.] The MDP is in the class \mdp or \mdn and the criterion $J^\star \in \{J^{(i)}\}_{1 \leq i \leq 4}$.
\item[\rm 2.] The MDP is in the class \ptmdp or \ptmdn and the criterion $J^\star \in \{\tJ^{(i)}\}_{1 \leq i \leq 4}$.
\end{enumerate}
In each case, w.r.t.\ $J^\star$ and $\Pi_\star \in  \{\Pi, \Pi_{sm}, \Pi_{ss} \}$, the following hold:
\begin{enumerate}[leftmargin=0.7cm,labelwidth=!]
\item[\rm (a)] The optimal average cost function $g^\star$ is lower semianalytic.
\item[\rm (b)] For each $\epsilon > 0$, there exists an $\epsilon$-optimal policy $\pi^* \in \Pi_\star$ that is, moreover, optimal for every state that admits an optimal policy.
\item[\rm (c)] In Case 1, the optimal average cost function w.r.t.\ $\Pi$ coincides with that w.r.t.\ $\Pi_{sm}$, and thus the $\epsilon$-optimal policy $\pi^*$ w.r.t.\ $\Pi$ given in (b) can be taken to be semi-Markov.
\item[\rm (d)] If $\Pi_\star \in \{ \Pi, \Pi_{sm}\}$, then for Case 1 with $J^\star \in \{J^{(2)}, J^{(4)}\}$ and $c(\cdot)$ being bounded from below, as well as for Case 2,  the $\epsilon$-optimal policy $\pi^*$ in (b) can be taken to be nonrandomized.
\end{enumerate}
\end{theorem}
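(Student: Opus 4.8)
The plan is to reduce everything to the partial-minimization machinery of Theorem~\ref{thm-opt}, applied to the representation (\ref{eq-g-strmeas}) of $g^\star$ as an infimum of the lower semianalytic objective $\phi$ over the fibers $\tilde\S_\star(x)$. For parts (a) and (b), I would first record that, viewed as a function on $\tilde\S_\star$ via $(x,p)\mapsto\phi(p)$, the objective is lower semianalytic: $\phi$ is lower semianalytic by Lemma~\ref{lem-ac-lsa}, and precomposing with the Borel projection $(x,p)\mapsto p$ preserves lower semianalyticity. Since $g^\star(x)=\inf_{p\in\tilde\S_\star(x)}\phi(p)$ by (\ref{eq-g-strmeas}), Theorem~\ref{thm-opt}(i) gives that $g^\star$ is lower semianalytic, which is (a), and Theorem~\ref{thm-opt}(ii) yields a policy $\pi^*\in\Pi_\star$ that is exactly optimal on the set $E^*$ of states admitting an optimal strategic measure and $\epsilon$-optimal elsewhere, matching the definition (\ref{def-pol-opt}); this is (b). (Here one uses that for $\pi$ semi-Markov/semi-stationary and initial state $x$ one has $\rho_x[\pi]\in\S_m$ or $\S_s$, which is what lets the $\Pi_{sm},\Pi_{ss}$ problems be written as infima over $\tilde\S_m,\tilde\S_s$.)

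For part (c), the key observation is that in Case~1 the objective $\phi$ depends on $p$ only through the sequence of state--action marginals $\gamma_n$, since each $J_n$ and $J_{n,j}$ is a finite sum of integrals $\int c\,d\gamma_k$; hence $\phi(p)=\phi(p')$ whenever $p,p'$ share the same marginals. I would then invoke the classical marginal-matching result (cf.\ Remark~\ref{rmk-strm}(c) and \cite[Lem.~1]{ShrB79}, \cite[Lem.~9.1]{bs}): for every $\pi\in\Pi$ and every $x$ there is a Markov policy whose state--action marginals from $x$ coincide with those of $\pi$. Consequently $\inf_{p\in\tilde\S(x)}\phi(p)=\inf_{p\in\tilde\S_m(x)}\phi(p)$ for all $x$, i.e.\ the optimal cost w.r.t.\ $\Pi$ equals that w.r.t.\ $\Pi_m$. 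Applying Theorem~\ref{thm-opt}(ii) in the $(\S_m,\Pi_{sm})$ setting then produces a semi-Markov $\epsilon$-optimal policy whose cost equals $g^\star$ w.r.t.\ $\Pi$.

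For part (d), I would verify the sufficient condition (\ref{suff-cond-nonrand}) of Theorem~\ref{thm-opt}(iii), which here (as $\phi$ does not depend on $x$) reads $\phi(\rho_x[\pi])\ge\int_{[0,1]^\infty}\phi(\rho_x[\bar f(\bar\theta)])\,\bar\lambda(d\bar\theta)$, using the mixture representation $\Pr_x^\pi=\int\Pr_x^{\bar f(\bar\theta)}\,\bar\lambda(d\bar\theta)$ from Theorem~\ref{thm-rep-nonrand}. In Case~2 each criterion has the form $\phi(p)=\int_\Omega\Psi\,dp$ for a fixed path functional $\Psi$ (e.g.\ $\Psi=\limsup_n\tilde c_n$), which is bounded below (resp.\ above) under \ptmdp{} (resp.\ \ptmdn); Tonelli's theorem then gives equality in (\ref{suff-cond-nonrand}). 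In Case~1 with $J^\star\in\{J^{(2)},J^{(4)}\}$ and $c$ bounded below, writing $a_n(\bar\theta):=n^{-1}\int_\Omega S_n\,d\Pr_x^{\bar f(\bar\theta)}$ with $S_n=\sum_{k=0}^{n-1}c(x_k,a_k)$ (and inserting the inner $\inf_{j\ge 0}$ for $J^{(4)}$), Tonelli gives $n^{-1}\int S_n\,d\Pr_x^\pi=\int a_n\,d\bar\lambda$, and Fatou's lemma—applicable precisely because boundedness below makes the $a_n$ uniformly bounded below—yields $\liminf_n\int a_n\,d\bar\lambda\ge\int\liminf_n a_n\,d\bar\lambda$, which is (\ref{suff-cond-nonrand}). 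Theorem~\ref{thm-opt}(iii) then delivers a nonrandomized $\epsilon$-optimal policy.

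The main obstacle is the verification in part (d), Case~1: the direction of Fatou's inequality is exactly what forces the restriction to the $\liminf$ criteria $J^{(2)},J^{(4)}$ and the bounded-below hypothesis, since for the $\limsup$ criteria $J^{(1)},J^{(3)}$ one would need the reverse inequality, and reverse Fatou fails without a uniform upper bound. I would take care with the commuting of $\inf_j$ and integration (only the easy direction $\inf_j\int\ge\int\inf_j$ is needed) and with the extended-real-number conventions, so that all integrals remain well defined under the \mdp/\mdn{} assumptions.
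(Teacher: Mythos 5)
Your proposal is correct and follows essentially the same route as the paper: parts (a)--(b) from (\ref{eq-g-strmeas}), Lemma~\ref{lem-ac-lsa}, and Theorem~\ref{thm-opt}(i)--(ii); part (c) from the marginal-matching result of \cite{ShrB79} together with the fact that the $J^{(i)}$ depend only on the state--action marginals; and part (d) by verifying (\ref{suff-cond-nonrand}) via Theorem~\ref{thm-rep-nonrand}(ii), with Fatou's lemma for the $\liminf$ criteria under the bounded-below hypothesis and equality in Case~2. Your added details (the direction of Fatou, the easy direction $\inf_j \int \geq \int \inf_j$, and why the $\limsup$ criteria are excluded) are consistent with, and usefully elaborate, the paper's terser argument.
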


\begin{proof}
Parts (a)-(b) follow from (\ref{eq-g-strmeas}), Lemma~\ref{lem-ac-lsa}, and Theorem~\ref{thm-opt}(i)-(ii). 
Part (c) follows from the following two facts. First, for each initial distribution $p_0$ and policy $\pi \in \Pi$, there is a Markov policy $\pi_m \in \Pi_m$ under which the marginal distributions of $(x_n, a_n)$, $n \geq 0$, coincide with those under $\pi$ (cf.\ the proof of \cite[Prop.~1]{ShrB79}). Second, the criteria $J^{(i)}$, $1 \leq i \leq 4,$ can be expressed as functions of those marginal distributions of $(x_n, a_n)$, $n \geq 0$.
Under the assumptions of part (d), condition (\ref{suff-cond-nonrand}) in Theorem~\ref{thm-opt}(iii) is satisfied by the function $\phi$. In particular, under the stated assumptions for Case 1, inequality (\ref{suff-cond-nonrand}) holds by Theorem~\ref{thm-rep-nonrand}(ii) and Fatou's lemma; for Case 2, clearly (\ref{suff-cond-nonrand}) holds with equality by Theorem~\ref{thm-rep-nonrand}(ii). Thus part (d) follows from Theorem~\ref{thm-opt}(iii).
\end{proof} 

\begin{rem} \rm \label{rmk-basicthm} 
We make a few comments on the preceding theorem and related results for average-cost MDPs.\\
(a) For the standard average cost criterion $J^{(1)}$, Theorem~\ref{thm-ac-basic}(c) asserts the existence of a \emph{randomized} semi-Markov $\epsilon$-optimal policy that is, moreover, optimal for every state that admits an optimal policy.
This conclusion is the strongest possible, without extra conditions on the MDP. Indeed it is known that even in an MDP with a countable state space, a finite action space, and bounded one-stage costs, there need not exist a nonrandomized semi-Markov $\epsilon$-optimal policy \cite[Chap.~7, Example 3]{DyY79} nor a randomized Markov $\epsilon$-optimal policy \cite[Sec.~5]{Fei80}. In both of these counterexamples, there exists an optimal policy for each state.

\smallskip
\noindent (b)  For the criteria $J^{(1)}, J^{(3)}, \tJ^{(i)}$, $i \leq 4$, using Theorem~\ref{thm-ac-basic}, we can further show that under certain reachability and boundedness conditions, the optimal average cost functions w.r.t.\ $\Pi$ are constant almost everywhere w.r.t.\ certain $\sigma$-finite measures (see \cite[Lem.\ 2.4, Cor.\ 3.2]{Yu22} for more details).

\smallskip
\noindent (c) For MDPs with the average cost criterion $J^{(1)}$ and with Borel measurable policies, there are extensive studies on the existence of nonrandomized stationary optimal policies and the validity of optimality equations or optimality inequalities, under various continuity/compactness or ergodicity conditions, and with a variety of methods. Some of the results have also been extended to MDPs with universally measurable policies. The literature is too vast to list here; we refer the interested reader to the surveys and book chapters \cite{ArB93,Bor02,HL02}, \cite[Chaps.\ 5-6]{HL96}, \cite[Chaps.\ 10-12]{HL99}, the recent articles \cite{FK21,FKL20,VAm18,Yu20}, and the references therein.
\qed
\end{rem}

\subsubsection{The Risk-Sensitive Case} \label{sec-risk-mdp}

The results of Section~\ref{sec-3} on strategic measures are applicable to a large class of risk-sensitive MDPs. 
As concrete examples, we discuss below several criteria related to average risk.
For simplicity, in this discussion, we assume that the one-stage cost function $c$ is real-valued on $\Gamma$.

Following a general framework in \cite{BaR14}, we define the first two average risk criteria through a strictly increasing, nonnegative continuous function $\psi: \bar \R \to [0, + \infty]$ as follows.
For all $\pi \in \Pi$, let
\begin{equation} \label{eq-risk-1}
    J_\psi(\pi, x) : = \limsup_{n \to \infty} \frac{1}{n} \, \psi^{-1} \! \left( \E^\pi_x \left[ \psi\big( \textstyle{\sum_{k=0}^{n-1}} c(x_k, a_k) \big) \right] \right),  \qquad x \in \X,
\end{equation}  
\begin{equation} \label{eq-risk-2}
   \hat J_\psi(\pi, x) : = \limsup_{n \to \infty}  \sup_{j \geq 0}  \frac{1}{n} \,\psi^{-1} \! \left( \E^\pi_x \left[ \psi\big( \textstyle{\sum_{k=0}^{n-1}} c(x_{j+k}, a_{j+k}) \big) \right] \right), \qquad x \in \X.
\end{equation}  
In the special case where $\psi(z) : = e^{\beta z}$ for some $\beta > 0$, (\ref{eq-risk-1}) becomes
\begin{equation}
  J_\psi(\pi, x) = \limsup_{n \to \infty} \frac{1}{\beta n} \log \E^\pi_x \left[ e^{\beta \sum_{k=0}^{n-1} c(x_k, a_k)} \right], \quad x \in \X, \notag
\end{equation}
which is related to the standard exponential utility and has been considered in many works on risk-sensitive MDPs under certain continuity/compactness model assumptions (see, e.g., \cite{CaS10,Jas07}). 

We mention that the non-negativity of $\psi$ is assumed for simplicity. As in \cite{BaR14}, we can also treat a larger class of criteria without this restriction, using the same arguments given below, if we introduce additional conditions to ensure that the expectations in (\ref{eq-risk-1})-(\ref{eq-risk-2}) are well-defined.

The next two criteria are based on Conditional Value-at-Risk (CVaR) and Value-at-Risk (VaR), two popular criteria for risk-averse control. In particular, for $\omega \in \Omega$, define the average cost along a sample path by
$\bar c(\omega) : = \limsup_{n \to \infty} \tfrac{1}{n} \sum_{k=0}^{n-1} c(x_k, a_k)$ (one can also define $\bar c(\omega)$ differently; the idea is the same).
For $r \in \bar \R$, let $(r)_+ : = \max \{ r, 0\}$.
Define the risk criteria $R_1, R_2$ to be the $\text{CVaR}_\alpha$ and $\text{VaR}_\alpha$ of the random variable $\bar c(\omega)$ under a policy $\pi \in \Pi$, respectively, for some parameter $\alpha \in (0,1]$ that represents the level of risk-aversion:
\begin{equation} \label{eq-cvar}
    R_1(\pi, x) : = \inf_{z \in \R} \left\{ z + \tfrac{1}{\alpha} \E^\pi_x \left[ ( \bar c(\omega) - z)_+ \right] \right\}, \qquad x \in \X,
\end{equation} 
\begin{equation} \label{eq-var}
    R_2(\pi, x) : = \inf \left\{ z \in \R \,\big|\,  \Pr^\pi_x \{ \bar c(\omega) \leq z \} \geq 1 - \alpha \right\}, \qquad x \in \X.
\end{equation} 
(As a side note, if for some constant $\ell$, $\bar c(\omega) = \ell$, $\Pr^\pi_x$-almost surely, then $R_1(\pi, x) = R_2(\pi, x) = \ell$. This ``degenerate'' case can happen if $\pi$ is stationary and satisfies certain ergodicity conditions.) 
For the probabilistic interpretations of $\text{CVaR}_\alpha$ and $\text{VaR}_\alpha$ and their use in stochastic programming/control, we refer the interested reader to \cite[Chap.\ 6]{SDR21}, \cite{WFC22}, and the references therein.

Every risk criterion given above can be expressed as a function of $\Pr^\pi_x$. So, as in the risk-neutral case, we can express the optimal risk function $g^\star$ w.r.t.\ the criterion and a certain subset of policies as a partial minimization problem of the form (\ref{eq-g-strmeas}) on strategic measures. The function $\phi(p)$ in (\ref{eq-g-strmeas}) is defined in the same way as before, by extending $c(\cdot)$ to $\X \times \A$ and by substituting $p$ for $\Pr^\pi_x$ in the defining equation for the criterion.
To apply Theorem~\ref{thm-opt}, let us show that $\phi$ is lower semianalytic.

\begin{lemma} \label{lem-anal-fn}
Let $X$ be a Borel space, $f: X \to \bar \R$, and $s:  \bar \R \to \bar \R$. If $f$ is lower semianalytic and $s$ is non-decreasing, then $s \circ f$ is lower semianalytic.
\end{lemma}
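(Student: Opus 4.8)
The plan is to verify the definition of lower semianalyticity directly. Since the domain $X$ is Borel and hence analytic, it suffices to show that for every $r \in \R$ the level set $\{x \in X \mid s(f(x)) \le r\}$ is analytic. The guiding observation is that, because $s$ is non-decreasing, the set $S_r := \{t \in \bar \R \mid s(t) \le r\}$ is a down-set in $\bar \R$: if $t \in S_r$ and $t' \le t$, then $s(t') \le s(t) \le r$. Consequently $S_r$ is an interval of the form $[-\infty, b_r)$ or $[-\infty, b_r]$ with $b_r := \sup S_r \in \bar \R$ (using $\sup \varnothing = -\infty$), and the level set of interest is $\{x \mid f(x) \in S_r\}$, which equals $\{x \mid f(x) \le b_r\}$ or $\{x \mid f(x) < b_r\}$ according to whether the supremum $b_r$ is attained in $S_r$.

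First I would record the two auxiliary facts that both $\{x \mid f(x) \le b\}$ and $\{x \mid f(x) < b\}$ are analytic for every $b \in \bar \R$. For $\{f \le b\}$: when $b \in \R$ this is immediate from the lower semianalyticity of $f$; when $b = +\infty$ it is all of $X$; and when $b = -\infty$ it is $\bigcap_n \{f \le -n\}$, a countable intersection of analytic sets. For $\{f < b\}$: one writes it as $\bigcup_{q \in \Q, \, q < b} \{f \le q\}$, a countable union of analytic sets (empty when $b = -\infty$, and equal to $\bigcup_n \{f \le n\}$ when $b = +\infty$). Here I rely on the standard closure of the class of analytic subsets of a Borel space under countable unions and countable intersections.

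Combining these, for each fixed $r$ I would split into the cases $b_r \in S_r$ and $b_r \notin S_r$, identifying $\{x \mid s(f(x)) \le r\}$ with $\{f \le b_r\}$ or $\{f < b_r\}$ respectively; both are analytic by the auxiliary facts. Since this holds for every $r \in \R$ and $X$ is analytic, $s \circ f$ is lower semianalytic.

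The point requiring care, rather than a genuine obstacle, is the bookkeeping at the endpoints $\pm\infty$: one must confirm that $S_r$ really is a down-set with the claimed interval form (including the degenerate possibilities $S_r = \varnothing$ and $S_r = \bar \R$), and that the characterization of $\{f \in S_r\}$ as a single inequality $\{f \le b_r\}$ or $\{f < b_r\}$ remains valid when $b_r$ is infinite. Everything else reduces to the closure properties of analytic sets and the defining property of $f$.
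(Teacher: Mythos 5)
Your proof is correct and takes essentially the same route as the paper's: both use the monotonicity of $s$ to identify the preimage of the ray (your $S_r = \{t \in \bar \R \mid s(t) \le r\}$, the paper's $E = \{a \in \bar\R \mid s(a) < r\}$) as a down-interval of the form $[-\infty, b)$ or $[-\infty, b]$, so that the level set of $s \circ f$ becomes a single level set of $f$ at the endpoint, which is analytic by lower semianalyticity of $f$. The only cosmetic difference is that the paper cites \cite[Lem.~7.30(1)]{bs} for the analyticity of $\{f < b\}$ and $\{f \le b\}$ with extended-real endpoint $b$, whereas you rederive these facts by hand via countable unions and intersections of rational-level sets.
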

\begin{proof}
By the definition of lower seminalytic functions, we need to show that for any $r \in \R$, the set $B: = \{ x \in X \mid s ( f(x)) < r \}$ is analytic. Since $s$ is non-decreasing, $E: = \{ a \in \bar \R \mid s(a) < r \}$ is either the empty set or a nonempty interval of the form $[-\infty, r')$ or $[-\infty, r']$ for some $r' \in \bar \R$. In the first case, $B = \varnothing$ and is thus analytic. In the second case, we have $B = \{ x \in X \mid f(x) < r' \}$ or $\{ x \in X \mid f(x) \leq r' \}$, which is analytic by \cite[Lem.\ 7.30(1)]{bs} since $f$ is lower semianalytic.
\end{proof} 

\begin{lemma} \label{lem-risk-lsa}
Let $c(\cdot)$ be real-valued on $\Gamma$. The function $\phi: \P(\Omega) \to \bar \R$ corresponding to any risk criterion defined in (\ref{eq-risk-1})-(\ref{eq-var}) is lower semianalytic.
\end{lemma}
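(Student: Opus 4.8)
The plan is to verify lower semianalyticity criterion by criterion, in each case decomposing $\phi$ into a finite composition of operations known to preserve lower semianalyticity: finite sums of lower semianalytic functions, outer composition with a non-decreasing function (Lemma~\ref{lem-anal-fn}), integration of a lower semianalytic integrand against $p$ (so that $p \mapsto \int f\,dp$ is lower semianalytic, by \cite[Cor.~7.48.1]{bs}, a consequence of \cite[Prop.~7.48]{bs} with the Borel kernel $q(d\omega \nmid p) := p(d\omega)$), countable suprema and $\limsup$ (\cite[Lem.~7.30]{bs}), and partial minimization over an analytic parameter set (\cite[Prop.~7.47]{bs}, i.e.\ property (d)). Throughout, $\sum_{k=0}^{n-1} c(x_k, a_k)$ and the shifted sums $\sum_{k=0}^{n-1} c(x_{j+k}, a_{j+k})$ are lower semianalytic on $\Omega$, being finite sums of lower semianalytic functions (the coordinate shifts are continuous, hence Borel), and $\bar c(\omega) = \limsup_n n^{-1}\sum_{k=0}^{n-1} c(x_k,a_k)$ is lower semianalytic on $\Omega$ as a $\limsup$ of such functions.

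For $J_\psi$ in (\ref{eq-risk-1}), the corresponding function is $\phi(p) = \limsup_n n^{-1}\,\psi^{-1}\!\big(\int_\Omega \psi(\sum_{k=0}^{n-1} c(x_k,a_k))\,p(d\omega)\big)$. Since $\psi$ is continuous and non-decreasing and $\sum_{k=0}^{n-1}c(x_k,a_k)$ is lower semianalytic, Lemma~\ref{lem-anal-fn} shows $\psi(\sum_{k=0}^{n-1}c(x_k,a_k))$ is lower semianalytic (and nonnegative) on $\Omega$; integrating against $p$ yields a lower semianalytic function of $p$; extending $\psi^{-1}$ to a non-decreasing function on $\bar\R$ and applying Lemma~\ref{lem-anal-fn} once more keeps it lower semianalytic; finally, the positive scaling by $n^{-1}$ and the $\limsup$ over $n$ preserve lower semianalyticity. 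The criterion $\hat J_\psi$ in (\ref{eq-risk-2}) is handled identically, with a countable supremum over $j \geq 0$ inserted before the $\limsup$, which again preserves lower semianalyticity.

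For the CVaR criterion $R_1$ in (\ref{eq-cvar}), I would first show that $F(z,p) := z + \tfrac1\alpha \int_\Omega (\bar c(\omega)-z)_+\,p(d\omega)$ is lower semianalytic on $\R \times \P(\Omega)$, and then obtain $\phi(p) = \inf_{z \in \R} F(z,p)$ by partial minimization. The map $(z,\omega)\mapsto \bar c(\omega)-z$ is lower semianalytic on $\R\times\Omega$ (sum of the lower semianalytic $\bar c$, pulled back by projection, and the continuous real-valued function $-z$); composing with the continuous non-decreasing map $(\cdot)_+$ keeps it lower semianalytic by Lemma~\ref{lem-anal-fn}; integrating against $p$ via the Borel kernel $q(d\omega\nmid(z,p)):=p(d\omega)$ yields a lower semianalytic function of $(z,p)$ by \cite[Prop.~7.48]{bs}; adding $z$ and scaling by $\tfrac1\alpha>0$ preserves this. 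Then \cite[Prop.~7.47]{bs}, applied to minimization over the $z$-coordinate on the analytic set $\R\times\P(\Omega)$, shows $\phi$ is lower semianalytic.

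The VaR criterion $R_2$ in (\ref{eq-var}) is where the main difficulty lies, because it is a quantile rather than an expectation and hence cannot be written directly as an integral of a lower semianalytic function. My plan is to recast it as a minimization over an analytic constraint set. Consider $A := \{(z,\omega)\in\R\times\Omega \mid \bar c(\omega)\le z\}$, which is the sublevel set $\{(z,\omega)\mid \bar c(\omega)-z\le 0\}$ of the lower semianalytic function used above and is therefore analytic. The key point is that the indicator $\ind_A$ is only \emph{upper} semianalytic (its superlevel sets equal $A$), so the map $(z,p)\mapsto p\{\bar c\le z\} = \int_\Omega \ind_A(z,\omega)\,p(d\omega)$ is upper semianalytic on $\R\times\P(\Omega)$ (apply \cite[Prop.~7.48]{bs} to the lower semianalytic $-\ind_A$). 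Consequently the constraint set $C := \{(z,p)\mid p\{\bar c\le z\}\ge 1-\alpha\}$, a superlevel set of an upper semianalytic function, is analytic. Since $\phi(p)=\inf\{z\in\R\mid (z,p)\in C\}$ is the partial minimization of the continuous (hence lower semianalytic) function $(z,p)\mapsto z$ over the $p$-sections of the analytic set $C$, \cite[Prop.~7.47]{bs} yields that $\phi$ is lower semianalytic. I expect this last case to be the crux: one must notice the switch from lower to upper semianalyticity forced by the indicator, and then exploit that superlevel sets of upper semianalytic functions are analytic in order to return to a lower-semianalytic partial-minimization problem.
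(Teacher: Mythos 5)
Your proposal is correct and follows essentially the same route as the paper's proof: the same decomposition for $J_\psi$ and $\hat J_\psi$ (Lemma~\ref{lem-anal-fn} for $\psi$ and the extended $\psi^{-1}$, integration via \cite[Prop.~7.48]{bs}, then countable sup/limsup), the same partial-minimization argument via \cite[Prop.~7.47]{bs} for CVaR, and the same key observation for VaR that $p\{\bar c(\omega)\le z\}$ is upper semianalytic so that its superlevel set is analytic and $\phi$ becomes a partial minimization of $z$ over its sections. The only cosmetic difference is that for $(\cdot)_+$ you invoke Lemma~\ref{lem-anal-fn} where the paper cites \cite[Lem.~7.30(2),(4)]{bs}; both are valid.
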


\begin{proof}
Consider first the criteria $J_\psi, \hat J_\psi$ in (\ref{eq-risk-1})-(\ref{eq-risk-2}). 
Since $\psi$ is strictly increasing, we can extend $\psi^{-1}$ to $\bar \R$ in such a way that the extension is a nondecreasing function on $\bar \R$. To simplify notation, we use the same symbol $\psi^{-1}$ to refer to this extension in the proof. 
Since $c(\cdot)$ is lower semianalytic, for all $n \geq 1, j \geq 0$,
the function $v_{n,j} (\omega) : = \sum_{k=0}^{n-1} c(x_{j+k}, a_{j+k})$ is lower semianalytic \cite[Lem.\ 7.30(4)]{bs}, and hence the function $\psi(v_{n,j}(\omega))$ is lower semianalytic by the monotonicity of $\psi$ and Lemma~\ref{lem-anal-fn}.
Then by \cite[Prop.\ 7.48]{bs}, the function $f_{n,j}(p) : =  \int \psi(v_{n,j}(\omega)) \, p(dw)$ is lower semianalytic on $\P(\Omega)$, so by Lemma~\ref{lem-anal-fn} $\psi^{-1}(f_{n,j}(p))$ is lower semianalytic on $\P(\Omega)$ as well. Finally, it follows from \cite[Lem.\ 7.30(2) and (4)]{bs} that the function $\phi(p) = \limsup_{n \to \infty} \tfrac{1}{n} \psi^{-1} (f_{n,0}(p))$ for the criterion $J_\psi$ and  the function $\phi(p) = \limsup_{n \to \infty} \sup_{j  \geq 0} \tfrac{1}{n} \psi^{-1} (f_{n,j}(p))$ for the criterion $\hat J_\psi$ are both lower semianalytic. 

Consider now the criterion $R_1$ given in (\ref{eq-cvar}). Since $c(\cdot)$ is lower semianalytic, by \cite[Lem.\ 7.30(2) and (4)]{bs}, the function $\bar c(\omega)$ is lower seminalytic on $\Omega$, and then the function 
$f(\omega, z) : = \tfrac{1}{\alpha} ( \bar c(\omega) - z)_+$ is lower semianalytic on $\Omega \times \R$ by \cite[Lem.\ 7.30(2) and (4)]{bs}.
Consequently, the function $(p, z) \mapsto \int f(\omega, z) \, p(d \omega)$ is lower semianalytic on $\P(\Omega) \times \R$ by \cite[Prop.\ 7.48]{bs}. It then follows from \cite[Lem.\ 7.30(4), Prop.\ 7.47]{bs} that the result of the partial minimization, $\phi(p) = \inf_{z \in \R} \{ z +  \int f(\omega, z) \, p(d \omega)\}$, is a lower semianalytic function on $\P(\Omega)$. 

Finally, consider the criterion $R_2$ given in (\ref{eq-var}). Since $\bar c(\cdot)$ is lower semianalytic, $(\omega, z) \mapsto \bar c(\omega) - z$ is a lower semianalytic function on $\Omega \times \R$ by \cite[Lem.\ 7.30(4)]{bs}, so the set $E : = \{ (\omega, z) \in \Omega \times \R \mid \bar c (\omega) - z \leq 0 \}$ is analytic. Then by \cite[Prop.\ 7.48]{bs},  the function $(p, z) \mapsto \int \ind_E(\omega, z) \, p(d\omega) = p \{ \bar c(\omega) \leq z \}$ is upper semianalytic on $\P(\Omega) \times \R$ and hence
the set $D : = \big\{ (p, z) \in \P(\Omega) \times \R \mid  p \{ \bar c(\omega) \leq z  \} \geq 1 - \alpha \big\}$ is analytic.
By the definition of $\phi$ in this case, we have $\phi(p) = \inf_{z \in D_p} z$, $p \in \P(\Omega)$, where $D_p$ is the $p$-section of $D$. Therefore $\phi$ is lower semianalytic by \cite[Prop.\ 7.47]{bs}.
\end{proof} 

\begin{rem} \rm
If $\Gamma$ and $c(\cdot)$ are Borel measurable, the function $\phi$ for any criterion defined in (\ref{eq-risk-1})-(\ref{eq-var}) is Borel measurable. This is obvious for (\ref{eq-risk-1})-(\ref{eq-risk-2}). For (\ref{eq-cvar})-(\ref{eq-var}), this can be seen from an alternative expression of $\phi$ given in (\ref{eq-cvar-exp}) and (\ref{eq-var-exp}), respectively, in Section~\ref{sec-minimax-model}. 
\qed
\end{rem}

From Lemma~\ref{lem-risk-lsa} and the proof of Theorem~\ref{thm-opt}, we obtain the following:

\begin{theorem} \label{thm-risk-mdp-opt} 
Let $c(\cdot)$ be real-valued on $\Gamma$. Let $J^\star$ be any risk criterion defined in (\ref{eq-risk-1})-(\ref{eq-var}), and let $\Pi_\star \in \{\Pi, \Pi', \Pi_{sm}, \Pi'_{sm}, \Pi_{ss}, \Pi'_{ss}\}$.
Then w.r.t.\ $J^\star$ and $\Pi_\star$, the optimal risk function $g^\star$ is lower semianalytc, and for each $\epsilon > 0$, there exists an $\epsilon$-optimal policy $\pi^* \in \Pi_\star$ that is, moreover, optimal for every state that admits an optimal policy.
\end{theorem}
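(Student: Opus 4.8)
The plan is to convert the claim into a single partial-minimization statement over strategic measures and then let the results of Section~\ref{sec-3} do the work. First I would note that every criterion in (\ref{eq-risk-1})--(\ref{eq-var}) depends on the policy only through the strategic measure it induces, so that $J^\star(\pi,x) = \phi\big(\rho_x[\pi]\big)$, where $\phi:\P(\Omega)\to\bar\R$ is the function appearing in Lemma~\ref{lem-risk-lsa}, obtained by substituting $p$ for $\Pr^\pi_x$ in the defining formula for $J^\star$. Using that a semi-Markov (resp.\ semi-stationary) policy started from $\delta_x$ induces a measure in $\tilde\S_m(x)$ (resp.\ $\tilde\S_s(x)$), and conversely that every measure in such a section is realized by a policy of the matching class via Prop.~\ref{prp-ummap-pol}, I would rewrite the optimal risk function uniformly as
\[
   g^\star(x) \;=\; \inf_{p \in \widehat\S_\star(x)} \phi(p), \qquad x \in \X,
\]
where $\widehat\S_\star = \tilde\S_\star$ for the unprimed classes $\Pi,\Pi_{sm},\Pi_{ss}$ (with $\S_\star = \S,\S_m,\S_s$) and $\widehat\S_\star = \tilde\S'_\star$ for the primed classes $\Pi',\Pi'_{sm},\Pi'_{ss}$.

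Next I would assemble the two inputs of the partial-minimization machinery. By Lemma~\ref{lem-risk-lsa} the function $\phi$ is lower semianalytic on $\P(\Omega)$, and since it ignores the $x$-coordinate its restriction to the analytic set $\widehat\S_\star$ (analytic by Lemma~\ref{lem-ext-set}) is lower semianalytic there. For the three unprimed classes the conclusion is then immediate from Theorem~\ref{thm-opt}(i)--(ii): $g^\star$ is lower semianalytic, and there is a $\pi^*\in\Pi_\star$ that is $\epsilon$-optimal everywhere and exactly optimal at each state admitting an optimal policy, the latter being precisely the states where the infimum above is attained.

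For the primed classes I would replay, rather than cite, the argument behind Theorem~\ref{thm-opt}, since that theorem is phrased only for the unprimed pairs whereas here the infimum defining $g^\star$ runs intrinsically over nonrandomized policies. Concretely, the analyticity of $\tilde\S'_\star$ lets me apply \cite[Prop.~7.47]{bs} for the lower semianalyticity of $g^\star$ and \cite[Prop.~7.50]{bs} to extract a universally measurable selection $\zeta^*:\X\to\P(\Omega)$ whose graph lies in $\tilde\S'_\star$, which attains $g^\star(x)$ wherever a minimizer exists and meets the $\epsilon$-optimality bound (with the $-1/\epsilon$ convention when $g^\star(x)=-\infty$) elsewhere. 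The last statement of Prop.~\ref{prp-ummap-pol}, applied to $\zeta^*$, then yields a policy $\pi^*$ with $\rho_x[\pi^*]=\zeta^*(x)$ that is nonrandomized and, in the semi-Markov or semi-stationary cases, of the required structural type; hence $\pi^*\in\Pi'_\star$, and translating the selection's properties through $J^\star(\pi^*,x)=\phi(\zeta^*(x))$ gives the asserted ($\epsilon$-)optimality.

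I do not expect a genuinely hard step, since all the heavy lifting has been done in Lemmas~\ref{lem-ext-set} and \ref{lem-risk-lsa}, Prop.~\ref{prp-ummap-pol}, and Theorem~\ref{thm-opt}. The points needing care are bookkeeping rather than conceptual: verifying that each section $\widehat\S_\star(x)$ is nonempty, so the infima are well-posed (which holds because the Jankov--von Neumann selection of $\Gamma$ supplies a nonrandomized stationary policy whose strategic measure from $\delta_x$ lands in $\tilde\S'_s(x)\subset\tilde\S'_m(x)\subset\tilde\S'(x)$, and likewise for the unprimed sets), and matching, across the six cases, the structural constraint that Prop.~\ref{prp-ummap-pol} enforces with the randomization constraint coming from the primed/unprimed distinction.
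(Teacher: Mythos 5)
Your proposal is correct and follows essentially the same route as the paper, whose entire proof is the remark that the result follows ``from Lemma~\ref{lem-risk-lsa} and the \emph{proof} of Theorem~\ref{thm-opt}'' --- i.e., precisely your strategy of reducing each criterion to a partial minimization of the lower semianalytic $\phi$ over $\tilde\S_\star(x)$ or $\tilde\S'_\star(x)$ and invoking Lemma~\ref{lem-ext-set}, \cite[Props.\ 7.47, 7.50]{bs}, and Prop.~\ref{prp-ummap-pol}. Your explicit replaying of that argument for the primed classes (where the infimum runs intrinsically over $\tilde\S'_\star$, so Theorem~\ref{thm-opt}(iii)'s extra condition is not needed) is exactly the intended reading of the paper's citation of the proof rather than the statement of Theorem~\ref{thm-opt}.
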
 

\subsubsection{Partially Observable Problems} \label{sec-pomdp}

We consider a general non-stationary model for stochastic control with imperfect state information. It is similar to the one defined in \cite[Def.\ 10.3]{bs} and can be described as follows:
\begin{itemize}[leftmargin=0.65cm,labelwidth=!]
\item The state space $\X$, the action space $\A$, and the state transition stochastic kernel $q(dy \,|\, x, a)$ are the same as defined in Section~\ref{sec-2.2}. 
\item The \emph{observation space} $\Z$ is a Borel space. There is a Borel measurable mapping $f: \X \to \Z$, which we call the \emph{observation function}. 
For $n \geq 0$, observable to the controller at the $n$th stage is not $x_n$ but the value of $z_n = f(x_n)$. 
We call $i_n : = (z_0, a_0, z_1, a_1, \ldots, z_n)$ an \emph{$n$th information vector} and denote its space $(\Z \times \A)^n \times \Z$ by $I_n$.
\item The control constraint at the $n$th stage depends on the $n$th information vector realized in the system. Specifically, there is a set-valued mapping $U_n: i_n \mapsto U_n(i_n) $ on $I_n$, where $U_n(i_n) \subset \A$ is a nonempty set of admissible actions when $i_n$ is the realized $n$th information vector. We assume that the graph of $U_n$, $\Gamma_n : = \{ (i_n, a_n) \mid i_n \in I_n, \, a_n \in U_n(i_n) \},$ is analytic.
\item The distribution $p_0$ of the initial state $x_0$ is known to the controller.
\end{itemize}

In this model we let $\Omega : = (\X \times \Z \times \A)^\infty$ and define a policy to be a sequence of stochastic kernels $\mu_n, n \geq 0$, where $\mu_n$ is a universally measurable stochastic kernel on $\A$ given $I_n \times \P(\X)$ such that
$$ \mu_n(U_n(i_n) \mid i_n; p_0) = 1, \qquad \forall \, i_n \in I_n, \, p_0 \in \P(\X).$$
The space of all policies is denoted by $\imPi$, and the subset of all nonrandomized policies is denoted by $\imPin$.
Since $\Gamma_n, n \geq 0$ are analytic, by the Jankov-von Neumann selection theorem \cite[Prop.~7.49]{bs}, there exists a sequence of analytically measurable functions $f_n: I_n \to \A$ with the graph of $f_n$ contained in $\Gamma_n$. Such a sequence corresponds to a nonrandomized policy in $\imPin$. The sets $\imPin$ and $\imPi$ are therefore nonempty. 

Let $J^\star(\pi, p_0)$ be an extended real-valued function on $\imPi \times \P(\X)$ that measures the performance of $\pi$ for an initial distribution $p_0$. For the criterion $J^\star$ and $\Pi_\star \in \{\imPi, \imPin\}$, the optimal value function $g^\star$ and the ($\epsilon$-)optimal policies are defined as in the case of MDPs with initial distributions $p_0 \in \P(\Omega)$ in place of initial states $x \in \X$ (cf.\ (\ref{def-g-opt})-(\ref{def-pol-opt})).

Let $\imS$ denote the set of strategic measures $\rho_{p_0}[\pi]$ on $\B(\Omega)$ induced by all policies $\pi \in \imPi$ and initial distributions $p_0 \in \P(\X)$. Let $\imSn$ denote the subset of strategic measures corresponding to those nonrandomzied policies in $\imPin$. Similarly to the case of MDPs, let 
$$ \timS : = \{ (p_0, p) \in \P(\X) \times \P(\Omega) \mid p \in \imS, \, p_0(p) = p_0 \}, \ \   \timSn : = \{ (p_0, p) \in \P(\X) \times \P(\Omega) \mid p \in \imSn, \, p_0(p) = p_0 \},$$
where $p_0(p)$ is the initial distribution of $x_0$ w.r.t.\ $p$.
For each $p_0 \in \P(\X)$, $\timS(p_0)$ and $\timSn(p_0)$ denote the $p_0$-section of $\imS$ and of $\imSn$, respectively.

\begin{prop} \label{prp-Si-pol}
For the partially observable control problem defined above, the following hold:
\begin{enumerate}[leftmargin=0.7cm,labelwidth=!]
\item[\rm (i)] The sets $\imS, \imSn, \timS$, and $\timSn$ are analytic; they are Borel if $\Gamma_n, n \geq 0,$ are Borel.
\item[\rm (ii)] Let $\zeta : \P(\X) \to \P(\Omega)$ be a universally measurable mapping such that  $\zeta(p_0) \in \tilde \imS(p_0)$ for all $p_0 \in \P(\X)$. 
Then there exists a universally measurable policy $\pi \in \imPi$ such that  $\rho_{p_0}[\pi] =  \zeta(p_0)$ for all $p_0 \in \P(\X)$. If, in addition, $\zeta(p_0) \in \timSn(p_0)$ for all $p_0 \in \P(\X)$, then $\pi$ can be taken to be nonrandomized.
\end{enumerate}
\end{prop}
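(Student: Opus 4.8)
The plan is to prove part~(i) by characterizing $\imS$ (and $\imSn$) through the two ingredients already used for ordinary MDPs in Theorem~\ref{thm-strat-m} — a state-transition constraint and a control constraint — together with one genuinely new ingredient specific to imperfect information: an \emph{observation-adaptedness} constraint forcing each action to depend on the past only through the information vector. Concretely, I would show that $p \in \P(\Omega)$ lies in $\imS$ if and only if (a) $z_n = f(x_n)$ holds $p$-a.s.\ for every $n$; (b) the conditional law of $x_{n+1}$ given $(x_0,z_0,a_0,\dots,x_n,z_n,a_n)$ equals $q(\cdot \nmid x_n,a_n)$; (c) under $p$ the action $a_n$ is conditionally independent of $(x_0,\dots,x_n)$ given $i_n = (z_0,a_0,\dots,z_n)$; and (d) $a_n \in U_n(i_n)$ holds $p$-a.s. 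Constraints~(a) and (b) are equalities between integrals of $p$ against countable determining families of bounded Borel functions, so each cuts out a Borel subset of $\P(\Omega)$; constraint~(d) asks $p\big(\{a_n \in U_n(i_n)\}\big) = 1$ for the analytic graph $\Gamma_n$, which defines an analytic set by property~(a) of Section~\ref{sec-2.1}. Given $p$ satisfying (a)--(d), a policy is recovered by setting $p_0 = p_0(p)$ and letting $\mu_n(\cdot \nmid i_n; p_0)$ be the conditional distribution of $a_n$ given $i_n$ furnished by~(c); uniqueness of the induced measure \cite[Prop.~7.45]{bs} then yields $\rho_{p_0}[\pi] = p$.

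The delicate point, and the step I expect to be the main obstacle, is the adaptedness constraint~(c): unlike (a), (b), (d), conditional independence is not manifestly the trace of an analytic condition on $p$, because the conditional law given $i_n$ itself depends on $p$. My intended remedy is to treat the POMDP as an instance of the \emph{structured-policy} framework of Section~\ref{sec-5}, in which the stage-$n$ policy kernel is required to factor through a fixed Borel reduction of the history (here $h_n \mapsto i_n$). The analyticity of $\imS$ and $\imSn$, and their Borel-ness when every $\Gamma_n$ is Borel, then follow from the general structured-measure result (cf.\ Prop.~\ref{prp-Sc}) applied to the observation reduction $f$, exactly as Theorem~\ref{thm-strat-m} does in the unconstrained case. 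The deterministic observation law $z_n = f(x_n)$ with $f$ Borel and the analytic constraint graphs $\Gamma_n$ are precisely the data this machinery consumes.

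With $\imS$ and $\imSn$ shown analytic (resp.\ Borel), the sets $\timS$ and $\timSn$ are handled as in Lemma~\ref{lem-ext-set}: each is the image of $\imS$, respectively $\imSn$, under the injective continuous map $p \mapsto \big(p_0(p), p\big)$ from $\P(\Omega)$ into $\P(\X) \times \P(\Omega)$, and images of analytic (resp.\ Borel) sets under such maps are analytic \cite[Prop.~7.40]{bs} (resp.\ Borel \cite[Sec.~I.3, Cor.~3.3]{Par67}). This completes part~(i).

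For part~(ii) I would mirror the proof of Prop.~\ref{prp-ummap-pol}. Given a universally measurable $\zeta$ with $\zeta(p_0) \in \timS(p_0)$, disintegrate each $\zeta(p_0)$ in the order dictated by the dynamics and read off the conditional distribution of $a_n$ given $i_n$; carrying $p_0$ along as a parameter and using measurable disintegration produces a universally measurable stochastic kernel $\mu_n(da_n \nmid i_n; p_0)$ on $\A$ given $I_n \times \P(\X)$. Membership $\zeta(p_0) \in \imS$ guarantees $\mu_n(U_n(i_n) \nmid i_n; p_0) = 1$, so $\pi := (\mu_0,\mu_1,\dots) \in \imPi$, and another appeal to \cite[Prop.~7.45]{bs} gives $\rho_{p_0}[\pi] = \zeta(p_0)$ for every $p_0$. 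When moreover $\zeta(p_0) \in \timSn(p_0)$, these disintegrated kernels are Dirac measures for $\zeta(p_0)$-almost every $i_n$; since $x \mapsto \delta_x$ is a homeomorphism of $\A$ onto the Borel set $\D(\A) \subset \P(\A)$, composing with its inverse extracts universally measurable functions $f_n(i_n; p_0)$ with $\mu_n = \delta_{f_n}$ a.s., i.e.\ a nonrandomized policy (off the relevant null set one reverts to a fixed selection of $U_n$). The technical care here lies in assembling the per-$p_0$ disintegrations into a single kernel that is jointly universally measurable in $(i_n, p_0)$, which is exactly where the universal measurability of $\zeta$ and the selection theorems are essential.
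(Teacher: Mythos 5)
Your proposal is correct and follows essentially the same route as the paper: the paper characterizes $\imS$ and $\imSn$ by almost-sure constraints (control, transition, observation, adaptedness, Dirac-ness) expressed through jointly Borel measurable conditional-distribution kernels --- precisely the structured-policy machinery of Section~\ref{sec-5} (Lemma~\ref{lem-gen1}, Prop.~\ref{prp-Sc}) that you invoke to handle the adaptedness condition --- then reuses the homeomorphism argument of Lemma~\ref{lem-ext-set} for $\timS,\timSn$ and the construction of Prop.~\ref{prp-Sc-ummap-pol} for part (ii). The one point to tighten is that the disintegrated kernels satisfy the control constraint only $\zeta(p_0)$-almost surely, so the reversion to a fixed measurable selection of $\Gamma_n$ on the (universally measurable) violation set is needed in the randomized case as well, not only in the nonrandomized one.
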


\begin{theorem} \label{thm-pomdp-opt}
Consider the partially observable control problem defined above. Suppose that the one-stage cost function $c(\cdot)$ is real-valued and bounded above or below on $\X \times \A$. Define $J^\star$ according to any average cost or risk criterion given in (\ref{eq-crt1})-(\ref{eq-pt-ac2a}) or (\ref{eq-risk-1})-(\ref{eq-var}) by extending those definitions to the space of initial distributions. Then w.r.t.\ $J^\star$ and $\Pi_\star \in \{\imPi, \imPin \}$, the following hold:
\begin{enumerate}[leftmargin=0.7cm,labelwidth=!]
\item[\rm (i)] the optimal average cost/risk function $g^\star$ is lower semianalytic; and 
\item[\rm (ii)] for each $\epsilon > 0$, there exists an $\epsilon$-optimal policy $\pi^* \in \Pi_\star$ that is, moreover, optimal for every initial distribution that admits an optimal policy. 
\end{enumerate}
\end{theorem}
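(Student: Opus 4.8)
The plan is to reduce the statement to the partial-minimization and selection machinery already assembled for Theorem~\ref{thm-opt}, with the initial distribution $p_0 \in \P(\X)$ now playing the role that the initial state $x \in \X$ played for MDPs. First I would note that each criterion in the statement, once extended by replacing $\E^\pi_x$ with the expectation under $\Pr^\pi_{p_0}$, depends on the policy only through its strategic measure: there is a single function $\phi : \P(\Omega) \to \bar\R$, given by the same defining formula used for MDPs, such that $J^\star(\pi, p_0) = \phi(\rho_{p_0}[\pi])$ for every $\pi \in \imPi$ and $p_0 \in \P(\X)$. Consequently, for $\Pi_\star = \imPi$,
\begin{equation} \label{eq-pomdp-reform}
  g^\star(p_0) = \inf_{\pi \in \Pi_\star} J^\star(\pi, p_0) = \inf_{p \in \timS(p_0)} \phi(p), \qquad p_0 \in \P(\X),
\end{equation}
a partial minimization of $\phi$ over the $p_0$-section of $\timS$; for $\Pi_\star = \imPin$ the same identity holds with $\timSn$ in place of $\timS$ (the equalities of the relevant sections with $\{\rho_{p_0}[\pi] : \pi \in \imPi\}$ and $\{\rho_{p_0}[\pi] : \pi \in \imPin\}$ are immediate from the definitions of $\imS$ and $\imSn$).

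Next I would check that $\phi$ is lower semianalytic on $\P(\Omega)$, now taking $\Omega = (\X \times \Z \times \A)^\infty$. Since the one-stage cost involves only the $(x_k, a_k)$-coordinates, which are recovered from $\omega$ by Borel measurable projections, the functions $c(x_k, a_k)$ are lower semianalytic on $\Omega$, and from this point the arguments in the proofs of Lemma~\ref{lem-ac-lsa} and Lemma~\ref{lem-risk-lsa} carry over verbatim. The hypothesis that $c(\cdot)$ is real-valued and bounded above or below guarantees that every criterion in (\ref{eq-crt1})--(\ref{eq-pt-ac2a}) and (\ref{eq-risk-1})--(\ref{eq-var}) is well-defined, so no $\infty - \infty$ ambiguity enters. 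Hence $\phi$, viewed on $\timS$ (or $\timSn$) through the Borel projection $(p_0, p) \mapsto p$, is lower semianalytic.

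Part (i) then follows at once: by Prop.~\ref{prp-Si-pol}(i) the set $\timS$ (resp.\ $\timSn$) is analytic, so the partial minimization in (\ref{eq-pomdp-reform}) is lower semianalytic by \cite[Prop.~7.47]{bs}, i.e.\ $g^\star$ is lower semianalytic. For part (ii) I would apply \cite[Prop.~7.50(b)]{bs} to obtain a universally measurable selector $\zeta^* : \P(\X) \to \P(\Omega)$ whose graph lies in $\timS$ (resp.\ $\timSn$), which attains the infimum on the set $E^*$ of those $p_0$ for which $\argmin_{p \in \timS(p_0)} \phi(p) \neq \varnothing$ and is $\epsilon$-optimal off $E^*$. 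Feeding $\zeta^*$ into Prop.~\ref{prp-Si-pol}(ii) produces a policy $\pi^* \in \imPi$ with $\rho_{p_0}[\pi^*] = \zeta^*(p_0)$ for all $p_0$, nonrandomized in the case $\Pi_\star = \imPin$ by the last clause of that proposition applied to the $\timSn$-valued selector. Because $J^\star(\pi^*, p_0) = \phi(\zeta^*(p_0))$, this $\pi^*$ is $\epsilon$-optimal everywhere, while $E^*$ is exactly the set of initial distributions admitting an optimal policy (a minimizer $p^\ast \in \timS(p_0)$ equals $\rho_{p_0}[\pi]$ for some $\pi \in \imPi$, and conversely), so $\pi^*$ is optimal there.

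I do not expect a serious obstacle: the proof is essentially a transcription of the proof of Theorem~\ref{thm-opt}, and the two points needing care---well-definedness and lower semianalyticity of $\phi$ on the enlarged path space---are routine given the boundedness hypothesis and the Borel measurability of the coordinate maps. One structural feature worth stressing is that the nonrandomized case costs nothing extra here: unlike Theorem~\ref{thm-opt}(iii), which needed the mixture inequality (\ref{suff-cond-nonrand}) to equate the randomized and nonrandomized optima, the present theorem fixes $\Pi_\star$ throughout and measures optimality relative to that same class, so for $\Pi_\star = \imPin$ one simply runs the entire argument over the analytic set $\timSn$ and never invokes Theorem~\ref{thm-rep-nonrand}. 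The only prerequisite to verify is that $\timS(p_0)$ and $\timSn(p_0)$ are nonempty for every $p_0$, which holds because $\imPi$ and $\imPin$ are nonempty; this ensures $\zeta^*$ is defined on all of $\P(\X)$ so that Prop.~\ref{prp-Si-pol}(ii) applies.
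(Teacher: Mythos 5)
Your proposal is correct and takes essentially the same route as the paper: the paper's own proof obtains Theorem~\ref{thm-pomdp-opt} precisely as a consequence of Prop.~\ref{prp-Si-pol}, Lemmas~\ref{lem-ac-lsa} and~\ref{lem-risk-lsa} (carried over to the enlarged path space), and \cite[Props.\ 7.47, 7.50]{bs}, by transcribing the proof of Theorem~\ref{thm-opt} with initial distributions $p_0 \in \P(\X)$ in place of initial states. Your side observation that the case $\Pi_\star = \imPin$ runs directly over the analytic set $\timSn$ without any appeal to Theorem~\ref{thm-rep-nonrand} is likewise consistent with the paper's treatment.
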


The proofs of Prop.~\ref{prp-Si-pol} and Theorem~\ref{thm-pomdp-opt}, being similar to their counterparts in the case of MDPs, will be outlined in Section~\ref{sec-pomdp-proof}.

\subsection{Minimax Control} \label{sec-minimax}

We now extend our results to a class of minimax control problems.

\subsubsection{Model and Assumptions} \label{sec-minimax-model}

We first specify the minimax control model: 
\begin{itemize}[leftmargin=0.7cm,labelwidth=!] 
\item[(a)] The state space $\X$, the action space $\A$, and the state transition stochastic kernel $q(dy \,|\, x, a)$ are the same as defined in Section~\ref{sec-2.2}.
\item[(b)] The action space $\A = \A_1 \times \A_2$, where $\A_1$ and $\A_2$ are \emph{Borel spaces} and correspond to the action spaces for player 1 (the controller) and player 2 (the opponent or nature), respectively. 
In terms of its components, we write an action $a \in \A$ as $a = (a^1, a^2)$.
\item[(c)]  For $i = 1,2$, the control constraint for player $i$ is given by a set-valued mapping $A_i$ on $\X$, $A_i : x \mapsto A_i(x) \subset \A_i$, where $A_i(x)$ is a nonempty subset of admissible actions for player $i$ at state $x$. The graphs of $A_1$ and $A_2$, denoted by $\Gamma_1$ and $\Gamma_2$, respectively, are \emph{analytic}. 
\item[(d)] The two players take actions simultaneously at each stage with player 1 never observing player~2's actions. Specifically, for $n \geq 0$, let $a_n : = (a_n^1, a_n^2)$ where $a_n^i$ is the action of player $i$ at the $n$th stage. Prior to making these actions, player 2 has the knowledge of the history $h_n : = (x_0, a_0, \ldots, x_n)$ up to stage $n$, whereas player~1 has access only to $i_n : =  (x_0, a_0^1, x_1, a_1^1, \ldots, x_n)$ and never observes the actions taken by player~2. The space of $i_n$ is denoted by $I_n$. 
\end{itemize}
Let us remark on the role of assumption (d). With this restriction on the information available to the controller, the problem becomes a game against ``nature.''  As will be seen later, while we can readily characterize strategic measures induced by cooperative or noncooperative players in partial-information settings that are much more general than the setting in (d), we need this assumption in order to reformulate minimax control problems as minimax problems on strategic measures. 

We define a policy for player $i$ to be a sequence $\pi_i : = \{ \mu_n^i \}_{n \geq 0}$, where $\mu_n^i$ is a universally measurable stochastic kernel on $\A_i$ given the space of the variables observable to that player at the $n$th stage---namely, $I_n$ for player 1 and $H_n$ for player 2,
such that
$$ \mu_n^1 ( A_1(x_n) \mid i_n ) = 1 \ \ \ \forall \, i_n \in I_n, \qquad  \mu_n^2 ( A_2(x_n) \mid h_n ) = 1 \ \ \  \forall \, h_n \in H_n. $$
We denote the set of all policies for player $i$ by $\Pi_i$. Since $\Gamma_1$ and $\Gamma_2$ are analytic, $\Pi_1$ and $\Pi_2$ are nonempty by \cite[Prop.\ 7.49]{bs}.

Denote the probability measure on $\U(\Omega)$ induced by a policy pair $(\pi_1, \pi_2) \in \Pi_1 \times \Pi_2$ and initial distribution $p_0 \in \P(\X)$ by $\Pr^{\pi_1,\pi_2}_{p_0}$. The associated expectation operator is denoted by $\E^{\pi_1,\pi_2}_{p_0}$, and the restriction of  $\Pr^{\pi_1,\pi_2}_{p_0}$ to $\B(\Omega)$ is denoted by $\rho_{p_0}[\pi_1, \pi_2]$. As before, if $p_0 = \delta_x$, we write $x$ instead of $\delta_x$ in the subscripts. 

Let $J^\star:  \Pi_1 \times \Pi_2 \times \X \to \bar \R$ be some given function. We consider the minimax control problem for player 1 w.r.t.\ the criterion $J^\star$, where the optimal value function is given by
$$ g^\star(x) : = \inf_{\pi_1 \in \Pi_1} \sup_{\pi_2 \in \Pi_2} J^\star(\pi_1, \pi_2, x), \quad x \in \X.$$
A policy $\pi_1 \in \Pi_1$ is called \emph{optimal for state $x$} if $\sup_{\pi_2 \in \Pi_2} J^\star(\pi_1, \pi_2, x) = g^\star(x)$, and \emph{$\epsilon$-optimal for state $x$} (where $\epsilon > 0$) if 
\begin{equation} \label{def-minmax-pol-opt}
     \sup_{\pi_2 \in \Pi_2} J^\star(\pi_1, \pi_2, x) \leq \begin{cases} 
                g^\star(x) + \epsilon & \text{if} \ g^\star(x) > - \infty, \\
                - 1/\epsilon & \text{if} \ g^\star(x) = - \infty.
                \end{cases}   
\end{equation}                
If these relations hold for \emph{all} states $x \in \X$, $\pi_1$ is called an \emph{($\epsilon$-)optimal} policy of player 1 w.r.t.\ $J^\star$.

We now introduce two assumptions. 
The first one is an absolute continuity condition on the probability measures induced on $I_n$ when player 1 applies the same policy.
It will allow us to rewrite the minimax control problem as a minimax optimization problem on strategic measures (cf.\ Prop.~\ref{prp-minmax-strm2}(ii) and the proof of Theorem~\ref{thm-minmax-opt}).

\begin{assumption} \label{cond-minimax-abscont}
For any $\pi_1 \in \Pi_1$, $\pi_2, \hat \pi_2 \in \Pi_2$, $x \in \X$, and $n \geq 0$, the marginal distributions of $\Pr^{\pi_1,\pi_2}_{x}$  and $\Pr^{\pi_1,\hat \pi_2}_{x}$ on $I_n$ are absolutely continuous w.r.t.\ each other.
\end{assumption}

Note that if in the above we replace ``for any $x \in \X$'' by ``for any initial distribution $p_0 \in \P(\X)$,'' we obtain an equivalent assumption. We now describe a class of problems that satisfy Assumption~\ref{cond-minimax-abscont}.

\begin{example} \rm
Suppose the state transition stochastic kernel $q(dy \,|\, x,a) = f(y, x, a) \, \eta(dy \,|\, x, a^1)$, where:
\begin{itemize}[leftmargin=0.7cm,labelwidth=!]
\item[(a)] $f$ is a strictly positive Borel measurable function on $\X^2 \times \A$; and
\item[(b)] $\eta$ is a nonnegative kernel on $\X$ given $\X \times \A_1$ such that (i) for each $B \in \B(\X)$, $\eta(B \,|\, x, a^1)$ is Borel measurable in $(x, a^1)$, and (ii) for each $(x, a^1) \in \X \times \A_1$, $\eta(\cdot \,|\, x, a^1)$ is a nonnegative measure on $\B(\X)$ and furthermore, $\eta(\cdot \,|\, x, a^1)$ is $\sigma$-finite uniformly in $(x, a^1)$ in the sense that for some sequence of Borel sets $B_k$ with $\X = \cup_k B_k$, we have $\sup_{x \in \X, a^1 \in \A_1} \eta( B_k \,|\, x, a^1) < \infty$ for all $k$. 
\end{itemize}
Then Assumption~\ref{cond-minimax-abscont} holds. Indeed, let $n \geq 0, p_0 \in \P(\X)$, and let $\pi_1: = \{\mu_n^1\} \in \Pi_1$, and $\pi_2 := \{\mu_n^2\}, \hat \pi_2 : = \{{\hat \mu}_n^2\} \in \Pi_2$. Consider a set $B \in \U(I_n)$ with $\Pr^{\pi_1,\pi_2}_{p_0}\{ i_n \in B\} = 0$, and let us show that $\Pr^{\pi_1, \hat \pi_2}_{p_0}\{ i_n \in B\}  = 0$, which will imply the desired conclusion since $\pi_2, \hat \pi_2$ are arbitrary. 
To this end, let $\gamma_n$ be the unique nonnegative $\sigma$-finite measure on $\U(H_n)$ determined by the product of $p_0(dx_0)$ with the other stochastic kernels and nonnegative kernels given below:
\begin{align}
 & p_0(dx_0), \ \mu_0^1(da^1_0 \mid x_0), \ \mu_0^2(da^2_0 \mid x_0), \  \eta(dx_1 \mid x_0, a^1_0), \ \ldots, \notag \\
 & \ldots, \ \mu_{n-1}^1(da_{n-1}^1 \mid i_{n-1}), \ \mu_{n-1}^2(da_{n-1}^2 \mid h_{n-1}), \ \eta(d x_n \mid x_{n-1}, a^1_{n-1}). \label{eq-ex-kernels}
\end{align} 
Define the measure $\hat \gamma_n$ on $\U(H_n)$ similarly, with $\hat \mu^2_k$ replacing $\mu^2_k$ in the above, for all $k < n$. The existence of $\gamma_n$ and $\hat \gamma_n$ follows from a repeated application of the product measure theorem given in \cite[Thm.\ 2.6.2]{Ash72} together with the proof arguments of \cite[Prop.\ 7.45]{bs} (the latter extend the result of the product measure theorem to $\U(H_n)$, which is not a product $\sigma$-algebra, by using the properties of universally measurable stochastic kernels).
By these arguments, one can also show that
$\Pr^{\pi_1,\pi_2}_{p_0}\{ i_n \in B\}  = \int_{H_n} \ind_B(i_n) \prod_{k=1}^n f(x_k, x_{k-1}, a_{k-1}) \, \gamma_n(d h_n)$ and $\Pr^{\pi_1,\hat \pi_2}_{p_0}\{ i_n \in B\}  = \int_{H_n} \ind_B(i_n) \prod_{k=1}^n f(x_k, x_{k-1}, a_{k-1}) \, \hat \gamma_n(d h_n)$.
Since $\Pr^{\pi_1,\pi_2}_{p_0}\{ i_n \in B\}  = 0$ and the function $f$ is strictly positive, we have 
$\int_{H_n} \ind_B(i_n) \, \gamma_n(d h_n) = 0$. As can be seen from (\ref{eq-ex-kernels}), the marginal measures of $\gamma_n$ and $\hat \gamma_n$ on $I_n$ must coincide, since $\eta(dx_{k+1} \,|\, x_{k}, a_{k}^1)$ and $\mu_k^1(da_k^1 \,|\, i_k)$, $k < n$, do not depend on the actions of player~2. Therefore, $\int_{H_n} \ind_B(i_n) \, \hat \gamma_n(d h_n) = 0$, implying $\Pr^{\pi_1,\hat \pi_2}_{p_0}\{ i_n \in B\} = 0$.
\qed
\end{example}

The second assumption places a measurability requirement on the criterion $J^\star$. It is commonly satisfied in applications, as we will discuss shortly, using examples.  

\begin{assumption} \label{cond-minmax-obj}
Associated with the criterion $J^\star$, there is an upper semianalytic function $\phi:  \P(\Omega) \to \bar \R$ such that
$J^\star(\pi_1, \pi_2, x) = \phi(\rho_x[\pi_1, \pi_2])$ for all $x \in \X, \pi_1 \in \Pi_1, \pi_2 \in \Pi_2$.
\end{assumption}

\begin{example} \rm
Let $\Gamma : = \{(x, a) \in \X \times \A \mid (x, a^1) \in \A_1, \, (x, a^2) \in \A_2\}$. Note that $\Gamma$ is analytic (Borel) if $\Gamma_1$ and $\Gamma_2$ are analytic (Borel); see Section~\ref{sec-minmax-proof} for a proof. Let $c : \Gamma \mapsto \bar \R$ be universally measurable. 
Consider any average cost or risk criterion $J^\star$ discussed in Section~\ref{sec-4.1}, with $(\pi_1, \pi_2)$ in place of $\pi$ and with $\Pr^{\pi_1, \pi_2}_x, \E^{\pi_1, \pi_2}_x$ in place of $\Pr^\pi_x, \E^\pi_x$, respectively. As before, since $J^\star$ can be expressed as a function of $\Pr^{\pi_1, \pi_2}_x$, by replacing $\Pr^{\pi_1, \pi_2}_x$ with $p \in \P(\Omega)$, we can define an associated function $\phi(p)$ on $\P(\Omega)$ that satisfies the equality condition in Assumption~\ref{cond-minmax-obj}. Let us show that under suitable conditions on $c(\cdot)$ and the sets $\Gamma_1, \Gamma_2$, the function $\phi$ is upper semianalytic and hence those average cost or risk criteria discussed earlier all satisfy Assumption~\ref{cond-minmax-obj}. 

Consider first any average cost criterion $J^\star$ defined in (\ref{eq-crt1})-(\ref{eq-pt-ac2a}). 
For that criterion, assume a model condition analogous to one of the model classes given in Def.~\ref{def-ac-models} for MDPs, so that for all $x \in \X, \pi_1 \in \Pi_1$, and $\pi_2 \in \Pi_2$, $J^\star(\pi_1, \pi_2, x)$ is well-defined and does not involve the summation $\infty - \infty$ or $-\infty + \infty$.
Now suppose that \emph{$c(\cdot)$ is upper semianalytic on $\Gamma$}. Since this means the function $-c(\cdot)$ is lower semianalytic on $\Gamma$, after taking care of some technical subtlety, we can apply arguments symmetric to those given in the proof of Lemma~\ref{lem-ac-lsa} to obtain that the function $\phi(p)$ corresponding to $J^\star$ is upper semianalytic.   

The technical subtlety just mentioned is as follows. In defining $\phi(p)$ on $\P(\Omega)$, we extend $c$ from $\Gamma$ to $\X \times \A$ and also allow summations involving both $\infty$ and $-\infty$. For an upper seminalaytic $c(\cdot)$, we extend $c$ to $\X \times \A$ by letting $c(\cdot) \equiv - \infty$ outside $\Gamma$, and we take $\infty - \infty = - \infty + \infty = - \infty$ in defining $\phi(p)$. These are opposite to the conventions we use when dealing with lower semianalytic functions. (They are needed solely for technical convenience; in particular, for making the extension of $c$ and $\phi$ upper semianalytic on the entire spaces. They do not affect the values of $\phi(p)$ for strategic measures $p$ due to the model condition mentioned earlier.)

Consider now any risk criterion $J^\star$ defined in (\ref{eq-risk-1})-(\ref{eq-cvar}). Assume that \emph{$c(\cdot)$ is real-valued and upper semianalytic on $\Gamma$}. Then the function $\phi(p)$ associated with $J^\star$ is upper semianalytic. The proof of this is entirely symmetric to the proofs of Lemmas~\ref{lem-anal-fn} and~\ref{lem-risk-lsa} in the case of (\ref{eq-risk-1}) or (\ref{eq-risk-2}). For the CVaR-based criterion (\ref{eq-cvar}), the proof is slightly different. First, by an argument symmetric to that given in the proof of Lemma~\ref{lem-risk-lsa}, the function $(p, z) \mapsto \tfrac{1}{\alpha} \int (\bar c(\omega) - z)_+ \, p(d\omega)$ is upper semianalytic on $\P(\Omega) \times \R$, and hence it is upper semianalytic in $p$ for each $z$. Using this and the fact that, with $\Q$ being the countable set of rational numbers,
\begin{align} \label{eq-cvar-exp}
   \phi(p): = & \inf_{z \in \R} \Big\{ z +  \alpha^{-1} \int_\Omega (\bar c(\omega) - z)_+ \, p(d\omega) \Big\}  \notag \\
   = & \inf_{z \in \Q} \Big\{ z +  \alpha^{-1}  \int_\Omega (\bar c(\omega) - z)_+ \, p(d\omega) \Big\}, \qquad p \in \P(\Omega)
\end{align}   
(which can be verified directly), it follows from \cite[Lem.\ 7.30(2)]{bs} that $\phi$ is upper seminalytic.

Finally, for the VaR-based criterion (\ref{eq-var}), let us assume that \emph{the sets $\Gamma_1$ and $\Gamma_2$ are Borel and the function $c(\cdot)$ is real-valued and Borel measurable on $\Gamma$}. Then by \cite[Prop.\ 7.29]{bs}, the set $D : = \big\{ (p, z) \in \P(\Omega) \times \R \mid  p \{ \bar c(\omega) \leq z  \} \geq 1 - \alpha \big\}$ is Borel. 
It can be verified directly that the function $\phi(p)$ in this case satisfies
\begin{align} \label{eq-var-exp}
\phi(p) : =  & \inf \left\{ z \in \R  \,\big|\,  p \{ \bar c(\omega) \leq z \} \geq 1 - \alpha \right\} \notag \\
= & \inf \left\{ z \in \Q  \,\big|\,  p \{ \bar c(\omega) \leq z \} \geq 1 - \alpha \right\}, \qquad p \in \P(\Omega).
\end{align}  
Thus, if $\{z_1, z_2, \ldots\}$ is an enumeration of $\Q$, 
we can express $\phi$ as $\phi(p) = \inf_{i \geq 1} f_i(p)$, for the Borel measurable functions $f_i$ defined by
$f_i(p) : = z_i$ if $(p, z_i) \in D$, and $f_i(p) = + \infty$ if $(p, z_i) \not\in D$. This shows that $\phi$ is Borel measurable and hence upper semianalytic.
\qed
\end{example}

\subsubsection{Properties of Strategic Measures and Optimality Results} \label{sec-minimax-opt}

Let $\mmS$ be the set of all strategic measures: 
$$ \mmS : = \big\{ p \in \P(\Omega) \mid p = \rho_{p_0}[\pi_1, \pi_2],  \,   \pi_1 \in \Pi_1,\,  \pi_2 \in \Pi_2, \, p_0 \in \P(\X) \big\}.$$
Define
$\tmmS: = \{ (x, p) \in \X \times \P(\Omega) \mid p \in \mmS, \, p_0(p) = \delta_x \}$
and denote the $x$-section of $\tmmS$ by $\tmmS(x)$. The proofs of the next two propositions are given in Section~\ref{sec-minmax-proof}.

\begin{prop} \label{prp-minmax-strm1}
For the minimax control problem defined above:
\begin{enumerate}[leftmargin=0.7cm,labelwidth=!]
\item[\rm (i)] The sets $\mmS$ and $\tmmS$ are analytic; they are Borel if $\Gamma_1$ and $\Gamma_2$ are Borel.
\item[\rm (ii)] Let $\zeta : \X \to \P(\Omega)$ be a universally measurable mapping such that $\zeta(x) \in \tmmS(x)$ for all $x \in \X$. 
Then there exist universally measurable policies $\pi_1 \in \Pi_1, \pi_2 \in \Pi_2$ such that  $\rho_{x}[\pi_1, \pi_2] =  \zeta(x)$ for all $x \in \X$. 
\end{enumerate}
\end{prop}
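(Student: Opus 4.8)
The plan for part (i) is to present $\mmS$ as an intersection $\S \cap \mathcal{I}$, where $\S$ is the strategic-measure set of an auxiliary single-controller MDP and $\mathcal{I}$ encodes the game's information structure. Form the Borel-space MDP with action space $\A = \A_1 \times \A_2$, transition kernel $q$, and control constraint $A(x) = A_1(x) \times A_2(x)$, whose graph is $\Gamma := \{(x,(a^1,a^2)) \mid (x,a^1) \in \Gamma_1,\ (x,a^2) \in \Gamma_2\}$; this $\Gamma$ is analytic, and Borel when $\Gamma_1, \Gamma_2$ are (as shown in Section~\ref{sec-minmax-proof}). Let $\S$ denote its strategic measures, so by Theorem~\ref{thm-strat-m} $\S$ is analytic, and Borel when $\Gamma_1, \Gamma_2$ are Borel. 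Next define $\mathcal{I} \subset \P(\Omega)$ to be the set of $p$ such that, for every $n$, under $p$: (C1) $a_n^1$ and $a_n^2$ are conditionally independent given $h_n$, and (C2) $a_n^1$ is conditionally independent of $h_n$ given $i_n$. Together (C1)--(C2) say exactly that the $p$-conditional law of $a_n$ given $h_n$ factors as $\mu^1(\cdot \mid i_n) \otimes \mu^2(\cdot \mid h_n)$, which is precisely the product structure produced by a pair in which player~1 uses only $i_n$ and the two players randomize independently. One then checks $\mmS = \S \cap \mathcal{I}$: the forward inclusion is immediate since a pair $(\pi_1,\pi_2)$ induces the combined universally measurable kernel $\mu_n^1(\cdot \mid i_n) \otimes \mu_n^2(\cdot \mid h_n)$ on $\A$ given $H_n$; the reverse inclusion follows by disintegrating any $p \in \S \cap \mathcal{I}$ into player kernels, exactly as in part (ii). Since an analytic set intersected with a Borel set is analytic, it suffices to prove that $\mathcal{I}$ is Borel, after which $\tmmS$ is obtained by the homeomorphism argument of Lemma~\ref{lem-ext-set}.

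For part (ii) the plan follows Prop.~\ref{prp-ummap-pol}. Given the universally measurable selection $\zeta$ with $(x,\zeta(x)) \in \tmmS$, I would disintegrate the measures $\zeta(x)$ coordinate by coordinate. Because $\zeta$ is universally measurable and each $\zeta(x)$ concentrates on $\{x_0 = x\}$, a universally measurable disintegration yields kernels that are jointly universally measurable in their arguments: take $\mu_n^2(da_n^2 \mid h_n)$ to be the $\zeta(x)$-conditional law of $a_n^2$ given $h_n$, and $\mu_n^1(da_n^1 \mid i_n)$ to be the conditional law of $a_n^1$ given $i_n$, the latter being a genuine function of $i_n$ alone precisely because $(x,\zeta(x)) \in \tmmS$ forces (C2). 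On the universally measurable null sets where $a_n^1 \in A_1(x_n)$ or $a_n^2 \in A_2(x_n)$ might fail, I redefine the kernels to equal fixed admissible kernels obtained from Jankov--von Neumann selections of $\Gamma_1$ and $\Gamma_2$; this produces $\pi_1 \in \Pi_1$ and $\pi_2 \in \Pi_2$ without altering any $\zeta(x)$. Finally, since each $\zeta(x) \in \mmS$ satisfies transition consistency (its conditional law of $x_{n+1}$ given $h'_n$ is $q$) and, by (C1)--(C2), has conditional action law $\mu_n^1(\cdot \mid i_n) \otimes \mu_n^2(\cdot \mid h_n)$, the measure built from $\delta_x$ and these kernels via \cite[Prop.~7.45]{bs} coincides with $\zeta(x)$, giving $\rho_x[\pi_1,\pi_2] = \zeta(x)$ for every $x$.

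I expect the main obstacle to be the Borelness of $\mathcal{I}$ in part (i) and, equivalently, the measurable disintegration underlying part (ii); both reduce to the fact that conditional independence relative to a fixed sub-$\sigma$-algebra is a Borel property of the underlying measure. Using a jointly measurable regular-conditional-probability map $p \mapsto \big(h_n \mapsto p(\,\cdot \mid h_n)\big)$ together with the fact that product measures form a Borel subset of $\P(\A)$, one can write (C1)--(C2) as the vanishing of Borel functionals of $p$, so that $\mathcal{I} = \bigcap_n \{\text{(C1) and (C2)}\}$ is Borel. The remaining steps --- the product-of-kernels structure, the everywhere-versus-almost-everywhere patching of the control constraints, and the reconstruction of $\zeta(x)$ from its disintegration --- are routine adaptations of the single-controller arguments behind Theorem~\ref{thm-strat-m} and Prop.~\ref{prp-ummap-pol}.
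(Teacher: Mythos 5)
Your proposal is correct and is essentially the paper's own argument: the paper likewise reduces to the auxiliary MDP with action space $\A_1 \times \A_2$ and analytic graph $\Gamma$, characterizes $\mmS$ by exactly your conditions (C1)--(C2) (its conditions (\ref{cond-minmax-strm1})--(\ref{cond-minmax-strm2}) in Lemma~\ref{lem-minmax-strm}), proves their Borelness via jointly measurable conditional kernels and the Borelness of the set of product measures (Lemma~\ref{lem-gen1}(i),(iii)), and passes to $\tmmS$ by the homeomorphism argument of Lemma~\ref{lem-ext-set}. Part (ii) is obtained there in the same way you describe: disintegrate $\zeta(x)$ into the kernels $\hat \nu^1_n(da^1_n \mid i_n; \zeta(x))$ and $\nu^2_n(da^2_n \mid h_n; \zeta(x))$, patch them on the universally measurable exceptional sets using Jankov--von Neumann selections of $\Gamma_1$ and $\Gamma_2$, and recover $\rho_x[\pi_1,\pi_2] = \zeta(x)$ via Lemma~\ref{lem-gen2}.
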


For $n \geq 0$, let $\hat \nu^1_n(d a^1_n \,|\, i_n; p)$ be a Borel measurable stochastic kernel on $\A_1$ given $I_n \times \P(\Omega)$ such that for each $p \in \P(\Omega)$, it is a conditional distribution of $a^1_n$ given $i_n$ w.r.t.\ $p$. The existence of these stochastic kernels is ensured by \cite[Cor.\ 7.27.1]{bs} (cf.\ the discussion at the beginning of Section~\ref{sec-proof-0}). Let $\hat p_{I_n}(p)$ denote the marginal of $p$ on $I_n$.
Define
$$ \hmmS : = \Big\{ (x, p, p') \in \X \times \P(\Omega)^2 \,\big|\, (x, p) \in \tmmS, \, (x, p') \in \tmmS, \, p' \ \text{satisfies (\ref{eq-minmax-rel1})-(\ref{eq-minmax-rel2})} \Big\},$$
where the relations (\ref{eq-minmax-rel1}) and (\ref{eq-minmax-rel2}) are as follows:
\begin{align}
 & p' \left\{ {\hat \nu}^1_n ( \cdot \mid i_n; p') =  {\hat \nu}^1_n( \cdot \mid i_n; p) \right\} = 1, \quad \forall \, n \geq 0,  \label{eq-minmax-rel1} \\
 & \hat p_{I_n}(p') \ \text{is absolutely continuous w.r.t.} \  \hat p_{I_n}(p), \quad \forall \, n \geq 0. \label{eq-minmax-rel2}
\end{align}
We denote the $(x,p)$-section of $\hmmS$ by $\hmmS(x, p)$. 

\begin{prop} \label{prp-minmax-strm2}
For the minimax control problem defined above:
\begin{enumerate}[leftmargin=0.7cm,labelwidth=!]
\item[\rm (i)] The set $\hmmS$ is analytic; it is Borel if $\Gamma_1$ and $\Gamma_2$ are Borel.
\item[\rm (ii)] Under Assumption~\ref{cond-minimax-abscont}, if $p = \rho_x[\pi_1, \pi_2]$ for some $x \in \X$ and $(\pi_1, \pi_2) \in \Pi_1 \times \Pi_2$, then the set $\hmmS(x, p) =  \{  \rho_x[\pi_1, \tilde \pi_2] \mid \tilde \pi_2 \in \Pi_2 \}$.
\end{enumerate}
\end{prop}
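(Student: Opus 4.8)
The plan is to prove (i) by writing $\hmmS$ as the intersection of an analytic set arising from $\tmmS$-membership with Borel sets arising from the relations (\ref{eq-minmax-rel1})--(\ref{eq-minmax-rel2}), and to prove (ii) by a two-sided inclusion whose essential content is to recover, from any strategic measure $p'$ in $\hmmS(x,p)$, a player-2 policy that reproduces $p'$ when paired with the player-1 policy carried by $p$.

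For part (i), I would first note that $\tmmS$ is analytic (Borel when $\Gamma_1,\Gamma_2$ are Borel) by Prop.~\ref{prp-minmax-strm1}(i), so each of the conditions $(x,p)\in\tmmS$ and $(x,p')\in\tmmS$ carves out an analytic (Borel) subset of $\X\times\P(\Omega)^2$ as a preimage under a coordinate projection. For (\ref{eq-minmax-rel1}), since each $\hat\nu^1_n$ is a Borel kernel and the diagonal of $\P(\A_1)^2$ is Borel, the set $\{(i_n,p,p'):\hat\nu^1_n(\cdot\mid i_n;p')=\hat\nu^1_n(\cdot\mid i_n;p)\}$ is Borel; integrating its indicator against $p'$ (a Borel operation in the triple, by the measurability of parametrized integrals of Borel functions) gives a Borel function of $(p,p')$ whose $\{=1\}$ level set is Borel. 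For (\ref{eq-minmax-rel2}), the marginal map $p\mapsto\hat p_{I_n}(p)$ is continuous, so it suffices to invoke the Borel measurability of the absolute-continuity relation $\{(\mu,\nu)\in\P(I_n)^2:\mu\ll\nu\}$. Intersecting over $n\ge0$ with the two $\tmmS$-conditions then yields $\hmmS$ as analytic in general and Borel when the graphs are Borel.

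For part (ii), fix $p=\rho_x[\pi_1,\pi_2]$ with $\pi_1=\{\mu^1_n\}$. The crucial observation, used in both inclusions, is that whenever $q=\rho_x[\pi_1',\pi_2']$ is a strategic measure, the fixed kernel $\hat\nu^1_n(\cdot\mid i_n;q)$ returns the player-1 kernel of $\pi_1'$ for $\hat p_{I_n}(q)$-almost every $i_n$, both being versions of the conditional law of $a^1_n$ given $i_n$ under $q$; in particular $\hat\nu^1_n(\cdot\mid i_n;p)=\mu^1_n(\cdot\mid i_n)$ for $\hat p_{I_n}(p)$-a.e.\ $i_n$. For the inclusion $\supseteq$, take $p'=\rho_x[\pi_1,\tilde\pi_2]$: Assumption~\ref{cond-minimax-abscont} forces $\hat p_{I_n}(p')\ll\hat p_{I_n}(p)$, giving (\ref{eq-minmax-rel2}), and this absolute continuity transports the a.e.\ identity from $\hat p_{I_n}(p)$ to $\hat p_{I_n}(p')$, so $\hat\nu^1_n(\cdot\mid i_n;p)=\mu^1_n(\cdot\mid i_n)=\hat\nu^1_n(\cdot\mid i_n;p')$ holds $p'$-a.s., which is (\ref{eq-minmax-rel1}). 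For the reverse inclusion $\subseteq$, I would start from $p'=\rho_x[\hat\pi_1,\hat\pi_2]\in\tmmS(x)$ with $\hat\pi_1=\{\hat\mu^1_n\}$ and combine the two relations: (\ref{eq-minmax-rel2}) transports $\hat\nu^1_n(\cdot\mid i_n;p)=\mu^1_n$ to $\hat p_{I_n}(p')$-a.e., whence (\ref{eq-minmax-rel1}) together with $\hat\nu^1_n(\cdot\mid i_n;p')=\hat\mu^1_n$ ($\hat p_{I_n}(p')$-a.e.) yields $\hat\mu^1_n(\cdot\mid i_n)=\mu^1_n(\cdot\mid i_n)$ for $\hat p_{I_n}(p')$-almost every $i_n$. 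Taking $\tilde\pi_2:=\hat\pi_2$, I would then establish $\rho_x[\pi_1,\tilde\pi_2]=p'$ by induction on the finite-dimensional distributions: assuming the $h_n$-marginals agree, the conditional law of $a^1_n$ given $h_n$ is $\mu^1_n(\cdot\mid i_n)$ on one side and $\hat\mu^1_n(\cdot\mid i_n)$ on the other and these coincide on the common $i_n$-support, player 2's conditional law is $\hat\mu^2_n(\cdot\mid h_n)$ on both sides, and the conditional independence of $a^1_n,a^2_n$ given $h_n$ together with the common transition kernel $q(\cdot\mid x_n,a_n)$ propagates the agreement to the $h_{n+1}$-marginals.

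The main obstacle I anticipate is the reverse inclusion in part (ii): the decomposition $p'=\rho_x[\hat\pi_1,\hat\pi_2]$ supplies a player-1 policy $\hat\pi_1$ that need only agree with $\pi_1$ almost everywhere on the reachable $i_n$-support, so the delicate point is upgrading this almost-everywhere agreement into an exact identity of strategic measures. This is precisely where both relations (\ref{eq-minmax-rel1})--(\ref{eq-minmax-rel2}) and Assumption~\ref{cond-minimax-abscont} must be used in concert, and the inductive propagation must be carried out carefully so that the null sets on which $\hat\mu^1_n$ and $\mu^1_n$ disagree never acquire positive mass at any stage. Relative to this, the only nonroutine ingredient in part (i) is the Borel measurability of the absolute-continuity relation on $\P(I_n)^2$, which I would treat as a standard descriptive-set-theoretic fact, derivable from the existence of a jointly measurable Radon--Nikodym derivative.
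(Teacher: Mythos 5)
Your proposal is correct and takes essentially the same approach as the paper's proof: part (i) via the same decomposition of $\hmmS$ into $\tmmS$-membership conditions and Borel sets encoding (\ref{eq-minmax-rel1})--(\ref{eq-minmax-rel2}), and part (ii) via the same two-sided inclusion resting on the $p'$-almost-sure identification of player~1's kernels with the fixed conditional kernels $\hat \nu^1_n$. The only differences are presentational: your inline diagonal argument and your induction on finite-dimensional marginals re-derive what the paper cites as Lemma~\ref{lem-gen1}(iv) and Lemma~\ref{lem-gen2}, and the Borel measurability of the absolute-continuity relation, which you defer to as a standard fact, is exactly what the paper grounds in the Dubins--Freedman theorem on the Borel measurability of the singular-part map.
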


We are now almost ready to present our optimality result. In order to fully settle the measurability questions involved, we still need an extension of those results listed in (d) near the end of Section~\ref{sec-2.1}. This requires an additional axiomatic assumption and is given by the following proposition under the axioms of ZFC and analytic determinacy. Its proof relies on a uniformaization theorem of Kond{\^o} \cite[Cor.\ 38.7]{Kec95} (proved under ZFC) and will be given in Section~\ref{sec-sel-proof} for completeness. While this proposition is sufficient for our purpose, its conclusions actually hold for a larger class of sets $D$ and functions $f$, as will be explained in Remark~\ref{rmk-sigma21} after the proof.

\begin{prop} \label{prp-ext-sel}
Assume the axiom of analytic determinacy. Let $X$ and $Y$ be Borel spaces, let $D \subset X \times Y$ be analytic, and let $f: D \to \bar \R$ be upper semianalytic. Define $f^*: \text{\rm proj}_X(D) \to \bar \R$ by $f^*(x) = \inf_{y \in D_x} f(x, y)$, where $D_x$ is the $x$-section of $D$.
Then the following hold:
\begin{enumerate}[leftmargin=0.7cm,labelwidth=!]
\item[\rm (i)] The function $f^*$ is universally measurable.
\item[\rm (ii)] The set $E^* : = \{ x \in \text{\rm proj}_X(D) \mid  \argmin_{y \in D_x} f(x,y) \not= \varnothing \}$ is universally measurable. 
For each $\epsilon > 0$, there exists a universally measurable function $\psi: \text{\rm proj}_X(D) \to Y$
such that $\psi(x) \in D_x$ for all $x \in \text{\rm proj}_X(D)$ and 
\begin{align} 
     f(x, \psi(x)) & = f^*(x), \qquad \forall \,  x \in E^*;   \label{eq-ext-sel1} \\
 f(x, \psi(x)) &  \leq  \begin{cases}
        f^*(x) + \epsilon & \text{if} \ f^*(x) > - \infty, \\
        -1/\epsilon & \text{if} \ f^*(x) = - \infty,
        \end{cases}  \qquad  \forall \, x \in \text{\rm proj}_X(D) \setminus E^*. \label{eq-ext-sel2}
\end{align}
\end{enumerate}
\end{prop}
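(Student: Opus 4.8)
The plan is to treat this as the \emph{upper semianalytic} counterpart of the partial-minimization result (d) of Section~\ref{sec-2.1} (i.e.\ \cite[Props.~7.47, 7.50]{bs}). There $f$ is lower semianalytic, $\inf_{D_x}$ preserves lower semianalyticity, and Jankov--von Neumann supplies the selection, all within ZFC. Here $f$ is upper semianalytic, so minimizing over the \emph{analytic} sections $D_x$ is the genuinely hard direction: the relevant sets are no longer analytic but sit one level up, in $\mathbf{\Sigma}^1_2$. The two tools that replace the ZFC machinery are Kond{\^o}'s uniformization theorem (together with its corollary that $\mathbf{\Sigma}^1_2$ sets admit $\mathbf{\Sigma}^1_2$-measurable uniformizations), which produces \emph{definable} selections already in ZFC, and the axiom of analytic determinacy, whose sole role is to upgrade ``$\mathbf{\Sigma}^1_2$-definable'' to ``universally measurable.'' This is precisely the measurability that cannot be secured in ZFC alone (cf.\ Remark~\ref{rmk-zfc-ad}).

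For part (i), I would observe that for each $r \in \R$, $\{x : f^*(x) < r\} = \text{proj}_X\big(D \cap \{(x,y) : f(x,y) < r\}\big)$. Since $f$ is upper semianalytic, $\{f < r\}$ is coanalytic; intersecting with the analytic set $D$ gives a set in $\mathbf{\Sigma}^1_1 \cap \mathbf{\Pi}^1_1 \subseteq \mathbf{\Sigma}^1_2$, whose projection on $X$ is again $\mathbf{\Sigma}^1_2$. Under analytic determinacy every $\mathbf{\Sigma}^1_2$ set is universally measurable, so all the sublevel sets of $f^*$ lie in $\U(X)$, whence $f^*$ is universally measurable.

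For the selection in part (ii), I would build two pieces and splice them. First, an \emph{exact} selector $\psi_0$ on the attainment set: applying Kond{\^o}'s theorem through the scale property of $\mathbf{\Sigma}^1_2$, I would uniformize the analytic set $D$ by a function $\psi_0$ with $\mathbf{\Sigma}^1_2$ graph, chosen (by refining the uniformizing scale with the value of $f$) so that $\psi_0(x)$ minimizes $f(x,\cdot)$ over $D_x$ whenever the minimum is attained. Analytic determinacy makes $\psi_0$ universally measurable; since the level sets of $f$ are (co)analytic, $x \mapsto f(x,\psi_0(x))$ is then universally measurable, and combined with part (i) this shows $E^* = \{x : f(x,\psi_0(x)) = f^*(x)\}$ is universally measurable, with $\psi_0$ realizing (\ref{eq-ext-sel1}) on $E^*$. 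Second, an $\epsilon$-optimal selector off $E^*$: for each rational $q$ the set $D \cap \{f < q\}$ is again $\mathbf{\Sigma}^1_2$ with projection $\{f^* < q\}$, so Kond{\^o}'s $\mathbf{\Sigma}^1_2$-uniformization yields a universally measurable partial selector $\psi_q$ there; patching the $\psi_q$ along a rational grid, with the grid level just above $f^*(x)$ chosen using the universal measurability of $f^*$, produces a universally measurable $\psi_1$ on $\text{proj}_X(D)$ satisfying (\ref{eq-ext-sel2}). Setting $\psi = \psi_0$ on $E^*$ and $\psi = \psi_1$ off $E^*$ then gives the desired selection.

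The main obstacle is the exact selector $\psi_0$. The value $f$ is real-valued and the infimum need not be attained, so $f$ does not directly define an ordinal norm; one must combine a discretization of $f$ with a genuine $\mathbf{\Sigma}^1_2$-scale on $D$ to guarantee that the canonical uniformizing point actually minimizes $f$ on those sections where a minimizer exists. Getting this scale refinement right — and thereby securing both the exactness of $\psi_0$ on $E^*$ and, as a byproduct, the universal measurability of $E^*$ — is the delicate step, whereas the $\epsilon$-optimal piece needs only sublevel-set uniformization and is routine. Throughout, analytic determinacy is used only to pass from the $\mathbf{\Sigma}^1_2$ definability delivered by Kond{\^o}'s theorem to universal measurability, consistent with the impossibility of such selections in ZFC alone recorded in Remark~\ref{rmk-zfc-ad}.
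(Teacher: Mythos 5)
Your part (i) and your $\epsilon$-optimal selector are essentially the paper's own argument: the paper also writes $\{x : f^*(x)<r\}$ as the projection of $D\setminus\{f\geq r\}$, a $\Sigma_2^1$ set, invokes \cite[Thm.\ 36.20]{Kec95} under analytic determinacy, and then uniformizes the sublevel sets $F(k):=\{(x,y)\in D \mid f(x,y)<k\epsilon\}$ via Kond{\^o} \cite[Cor.\ 38.7]{Kec95} and patches over the bands $B(k):=\{(k-1)\epsilon\leq f^*<k\epsilon\}$. Two small omissions in your patching: on $\{f^*=+\infty\}$ no sublevel set is available, so one must fall back on the Jankov--von Neumann selector of $D$ itself, and on $\{f^*=-\infty\}$ one must pick the grid level at or below $-1/\epsilon$ to meet \eqref{eq-ext-sel2}; both are routine and handled in the paper by $\bar\psi$ and $\psi_{k^*}$ with $k^*\leq-\epsilon^{-2}$.

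The genuine gap is your exact selector $\psi_0$, and with it your proof that $E^*$ is universally measurable. Your plan is to uniformize $D$ by ``refining the uniformizing scale with the value of $f$'' so that the canonical point minimizes $f(x,\cdot)$ whenever a minimizer exists; you correctly flag this as the delicate step, but it is never carried out, and the discretization you propose cannot close it: discretizing $f$ certifies only $\epsilon$-optimality, never exact attainment of $f^*(x)$, while the set of exact minimizers $\{(x,y)\in D \mid f(x,y)\leq f^*(x)\}$ is not a set you can place in $\Sigma_2^1$ from the data, since $f^*$ is merely universally measurable rather than projectively definable. The paper avoids scales entirely with an auxiliary-coordinate trick: set $G:=\{(x,y,b)\in X\times Y\times\bar\R \mid (x,y)\in D,\ f(x,y)\leq b\}$, which is $\Sigma_2^1$ (write it as $\cap_k\cup_{r}\{f\leq r\}\cap\{r\leq b+k^{-1}\}$ over rational $r$), regard $G$ as a subset of $(X\times\bar\R)\times Y$ and apply Kond{\^o} there to get a uniformization $\phi(x,b)\in\{y\in D_x \mid f(x,y)\leq b\}$ that is universally measurable under analytic determinacy; then $T(x):=(x,f^*(x))$ is universally measurable by part (i), the composition $\psi^*:=\phi\circ T$ is universally measurable \cite[Prop.\ 7.44]{bs} and is an exact minimizer on $E^*$, and --- independently of any selector --- $E^*=T^{-1}\big(\text{proj}_{X\times\bar\R}(G)\big)$ is universally measurable because $\text{proj}_{X\times\bar\R}(G)$ is again $\Sigma_2^1$. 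This one move (pushing the value of $f$ into an extra coordinate and uniformizing in the $Y$-direction only) supplies exactly what your scale refinement was supposed to deliver; without it, or a genuinely worked-out scale-theoretic substitute requiring machinery well beyond Kond{\^o}'s theorem as stated, both the construction of $\psi_0$ and the universal measurability of $E^*$ remain unproved in your argument.
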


\begin{rem} \label{rmk-ad} \rm
Regarding analytic determinacy and the standard ZFC axioms:\\
(a) Roughly speaking, the axiom of analytic determinacy concerns a certain class of infinite-stage two-player games, called analytic games, in which the two players take turns to choose a natural number, thereby producing an infinite sequence of such numbers, and the set of winning outcomes is a member of the algebra generated by analytic sets in the space of all such sequences. The axiom asserts that all these games are determined (i.e., one of the two players has a winning strategy). For the precise definition and discussions on the mathematical evidence for this axiom and the related large cardinal axioms in set theory, see the book \cite[Sec.\ 26.B]{Kec95} and the articles \cite[p.\ 6582]{MaS88} and \cite{Koe14}.

\noindent (b) The following assertion is known to be independent of ZFC (namely, ZFC can neither prove nor disprove it; cf.\ \cite[p.\ 546]{MPS90} and \cite[p.\ 677]{PrS16}): there exists a coanalytic set $B$ in $X \times Y := [0, 1]^2$ whose projection on $X$ is not universally measurable. If such a set $B$ exists, a counterexample to Prop.~\ref{prp-ext-sel} can be constructed by letting $D = [0,1]^2$ and $f(x, y) = \ind_{B^c}(x, y)$, and similar counterexamples to the universal measurability of the optimal value functions in MDPs and in minimax stochastic optimization problems have been discussed in \cite[Ex.\ (48)]{BFO74} and \cite[Ex.\ 1, Remark 4]{Now10}, respectively. Under ZFC and analytic determinacy, such sets $B$ do not exist \cite[Thm.\ 36.20]{Kec95}; see Section~\ref{sec-sel-proof} for more details.
\qed
\end{rem}

\begin{theorem} \label{thm-minmax-opt}
Assume the axiom of analytic determinacy. Consider the minimax control problem defined above, and let Assumptions~\ref{cond-minimax-abscont}-\ref{cond-minmax-obj} hold.
Then w.r.t.\ $J^\star$, we have the following:
\begin{enumerate}[leftmargin=0.7cm,labelwidth=!]
\item[\rm (i)] The optimal value function $g^\star$ for player 1 is universally measurable.
\item[\rm (ii)] For each $\epsilon > 0$, player 1 has a universally measurable $\epsilon$-optimal policy $\pi^*_1 \in \Pi_1$ that is, moreover, optimal for every state that admits an optimal policy.
\end{enumerate}
\end{theorem}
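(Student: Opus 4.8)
The plan is to recast the minimax problem as a nested optimization over strategic measures, dispatching the inner maximization over player~2 with Prop.~\ref{prp-minmax-strm2} and the outer minimization over player~1 with Prop.~\ref{prp-ext-sel}. First I would handle the inner supremum. Fix any strategic measure $p = \rho_x[\pi_1, \pi_2]$; under Assumption~\ref{cond-minimax-abscont}, Prop.~\ref{prp-minmax-strm2}(ii) gives $\hmmS(x,p) = \{ \rho_x[\pi_1, \tilde \pi_2] \mid \tilde \pi_2 \in \Pi_2 \}$, so by Assumption~\ref{cond-minmax-obj},
\[
 \sup_{\tilde \pi_2 \in \Pi_2} J^\star(\pi_1, \tilde \pi_2, x) \;=\; \sup_{p' \in \hmmS(x,p)} \phi(p') \;=:\; \Phi(x,p).
\]
Viewing $\hmmS \subset (\X \times \P(\Omega)) \times \P(\Omega)$ as analytic (Prop.~\ref{prp-minmax-strm2}(i)) and $(x,p,p') \mapsto \phi(p')$ as upper semianalytic, I would invoke the upper-semianalytic analogue of property~(d) from Section~\ref{sec-2.1}—obtained by applying (d) to $-\phi$ and negating—to conclude that $\Phi$ is upper semianalytic on the projection of $\hmmS$ onto the $(x,p)$ coordinates, a set that contains $\tmmS$ because $(x,p,p) \in \hmmS$ whenever $(x,p) \in \tmmS$. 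A key point to record is that, by Prop.~\ref{prp-minmax-strm2}(ii), $\Phi(x,p)$ depends on $p$ only through player~1's behavior, so its value equals $\sup_{\tilde \pi_2} J^\star(\pi_1, \tilde \pi_2, x)$ for \emph{any} $p = \rho_x[\pi_1, \cdot]$.

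Next I would collapse the outer infimum. Since $\tmmS(x) = \{ \rho_x[\pi_1, \pi_2] \mid \pi_1 \in \Pi_1, \pi_2 \in \Pi_2 \}$ and $\Phi(x,p)$ returns the inner supremum attached to the player-1 component of $p$, ranging $p$ over $\tmmS(x)$ reproduces exactly the range of $\pi_1$, whence
\[
 g^\star(x) \;=\; \inf_{p \in \tmmS(x)} \Phi(x,p), \qquad x \in \X.
\]
This is a partial minimization of the upper semianalytic $\Phi$ over the sections of the analytic set $\tmmS$ (Prop.~\ref{prp-minmax-strm1}(i)). Here lies the crux: because $\Phi$ is upper (not lower) semianalytic, property~(d) does not apply, and under ZFC alone this infimum need not even be universally measurable (Remark~\ref{rmk-ad}(b)). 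This is precisely the gap filled by analytic determinacy. Applying Prop.~\ref{prp-ext-sel} with $X = \X$, $Y = \P(\Omega)$, $D = \tmmS$, and $f = \Phi|_{\tmmS}$ yields part~(i) directly, and furthermore supplies, for each $\epsilon > 0$, a universally measurable selector $\zeta : \X \to \P(\Omega)$ with $\zeta(x) \in \tmmS(x)$ that attains $g^\star(x)$ on the universally measurable set $E^* = \{ x \mid \argmin_{p \in \tmmS(x)} \Phi(x,p) \neq \varnothing \}$ and meets the $\epsilon$-bounds of \eqref{eq-ext-sel2} off $E^*$.

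Finally I would extract the policy. Feeding $\zeta$ into Prop.~\ref{prp-minmax-strm1}(ii) produces universally measurable policies $\pi_1^* \in \Pi_1$, $\pi_2^* \in \Pi_2$ with $\rho_x[\pi_1^*, \pi_2^*] = \zeta(x)$ for all $x$; then, again via Prop.~\ref{prp-minmax-strm2}(ii) under Assumption~\ref{cond-minimax-abscont},
\[
 \sup_{\pi_2 \in \Pi_2} J^\star(\pi_1^*, \pi_2, x) = \Phi\big(x, \rho_x[\pi_1^*, \pi_2^*]\big) = \Phi(x, \zeta(x)),
\]
which equals $g^\star(x)$ on $E^*$ and obeys the $\epsilon$-optimality bounds elsewhere, giving part~(ii). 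The last routine check is that $E^*$ coincides with the set of states admitting an optimal player-1 policy: a minimizer $p \in \tmmS(x)$ of $\Phi(x, \cdot)$ is realized by some pair $(\pi_1, \pi_2)$ whose player-1 component is optimal for $x$, while conversely any optimal $\pi_1$ furnishes such a minimizer. I expect the main difficulty to lie entirely in justifying the hypotheses of Prop.~\ref{prp-ext-sel}—namely, verifying that $\Phi$ is upper semianalytic and that the reduction $g^\star(x) = \inf_{p \in \tmmS(x)} \Phi(x,p)$ is exact—since everything downstream is a measurable-selection argument already packaged in the cited propositions.
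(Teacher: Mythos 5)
Your proposal is correct and follows essentially the same route as the paper's own proof: rewrite the inner supremum via Prop.~\ref{prp-minmax-strm2}(ii) as $\sup_{p' \in \hmmS(x,p)} \phi(p')$, deduce that this partial supremum is upper semianalytic on $\tmmS$, collapse the outer infimum to a partial minimization over $\tmmS(x)$, invoke Prop.~\ref{prp-ext-sel} (the sole point where analytic determinacy enters) to get measurability and a universally measurable selector, and realize the selector as a policy pair via Prop.~\ref{prp-minmax-strm1}(ii). Your closing check that $E^*$ coincides with the set of states admitting an optimal player-1 policy is left implicit in the paper but is a correct and worthwhile addition.
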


\begin{proof} 
By Assumptions \ref{cond-minimax-abscont}-\ref{cond-minmax-obj} and Prop.~\ref{prp-minmax-strm2}(ii), for all $\pi_1 \in \Pi_1$ and $x \in \X$, we have
\begin{equation} \label{eq-minmax-prf1}
   \sup_{\pi_2 \in \Pi_2} J^\star(\pi_1, \pi_2, x) = \sup_{p' \in \hmmS(x, p)} \phi(p'), \quad \text{where  $p = \rho_x[\pi_1, \hat \pi_2]$ for some $\hat \pi_2 \in \Pi_2$}.
\end{equation}   
In view of the definitions of $\hmmS$ and $\tmmS$, this implies that for all $x \in \X$,
$$g^\star(x) = \inf_{p \in \tmmS(x)} f(x, p),   \quad \text{where} \ f: \tmmS \to \bar \R \ \text{with} \ f (x,p) : = \sup_{p' \in \hmmS(x, p)} \phi(p').$$
Since $\phi$ is upper semianalytic by Assumption~\ref{cond-minmax-obj} and $\hmmS$ is analytic by Prop.~\ref{prp-minmax-strm2}(i), the function $f$ is upper semianalytic on $\tmmS$ by \cite[Prop.\ 7.47]{bs}. Recall that $\tmmS$ is analytic by Prop.~\ref{prp-minmax-strm1}(i). It then follows from Prop.~\ref{prp-ext-sel}(i) that $g^\star$ is universally measurable.

Let $\epsilon > 0$. By Prop.~\ref{prp-ext-sel}(ii), there is a universally measurable mapping $\zeta: \X \to \P(\Omega)$ with $\zeta(x) \in \tmmS(x)$ for all $x \in \X$ such that 
\begin{align}
     f(x, \zeta(x)) & = g^\star(x),   \qquad \forall \, x \in E^* : = \Big\{ x \in \X \,\Big|\, \argmin_{p \in  \tmmS(x)} f(x, p) \not=\varnothing \Big\}  \label{eq-minmax-prf2} \\
      f(x, \zeta(x)) & \leq \begin{cases}
         g^\star(x) + \epsilon &  \text{if} \ g^\star(x) > - \infty, \\
         - 1/\epsilon & \text{if} \ g^\star(x) = - \infty,
         \end{cases} \qquad \forall \, x \not\in E^*.  \label{eq-minmax-prf3} 
 \end{align}     
Let $(\pi^*_1, \pi^*_2)$ be the pair of policies given by Prop.~\ref{prp-minmax-strm1}(ii) for this mapping $\zeta$.
By (\ref{eq-minmax-prf1}) and Prop.~\ref{prp-minmax-strm1}(ii),
$\sup_{\pi_2 \in \Pi_2} J^\star(\pi^*_1, \pi_2, x) = f(x, \rho_x[\pi^*_1, \pi^*_2]) = f(x, \zeta(x)),$
so, in view of (\ref{eq-minmax-prf2})-(\ref{eq-minmax-prf3}), the policy $\pi^*_1$ is $\epsilon$-optimal and moreover, optimal for every state that admits an optimal policy.
\end{proof} 

\begin{rem} \rm We mention two direct extensions of the preceding theorem.\\
(a) One can also define the optimal value function $g^\star$ and the ($\epsilon$-)optimal policies w.r.t.\ the subset of nonrandomized policies, or the subset of randomized or nonrandomized, semi-Markov or semi-stationary policies. By similar arguments one can show that Theorem~\ref{thm-minmax-opt} remains valid if $\Pi_1$ is replaced by such a subset of policies of player 1.

\smallskip
\noindent (b) Another extension concerns the definition of $g^\star$. Suppose that we have a sequence of criteria $J^\star_n$, $n \geq 0$, all satisfying Assumption~\ref{cond-minmax-obj}, and we define player 1's optimal value function $g^\star$ as 
$$ g^\star(x) : =  \inf_{\pi_1 \in \Pi_1} \limsup_{n \to \infty} \sup_{\pi_2 \in \Pi_2} J^\star_n (\pi_1, \pi_2, x), \quad x \in \X. $$ 
Then, by redefining the function $f(x,p)$ and following the same reasoning in the above proof, one can show that Theorem~\ref{thm-minmax-opt} remains valid. An example of this type of $g^\star$ is the function considered in \cite{JaN14} for robust MDPs:
$g^\star(x) : =  \inf_{\pi_1 \in \Pi_1} \limsup_{n \to \infty} \sup_{\pi_2 \in \Pi_2} \E^{\pi_1,\pi_2}_x \big[  n^{-1} \textstyle{\sum_{k=0}^{n-1}} c(x_k, a_k) \big]$.
\qed
\end{rem}

\begin{rem} \rm
Suppose that $\Gamma_1$ and $\Gamma_2$ admit Borel measurable selections so that both players have Borel measurable policies. Then $\mmS$ coincides with the set of strategic measures induced by Borel measurable policy pairs and moreover, because of the absolute continuity condition in Assumption~\ref{cond-minimax-abscont}, one can show that for each initial state, player 1 has a Borel measurable $\epsilon$-optimal policy. This is similar to the MDP case (cf.\  Remark~\ref{rmk-strm}(b)). But this need not be true for general minimax problems without Assumption~\ref{cond-minimax-abscont} (such as the stochastic games studied in \cite{Now85b}). The reason, roughly speaking, is that although one can always modify an $\epsilon$-optimal universally measurable policy of player 1 into a Borel measurable policy by ``sacrificing'' its $\epsilon$-optimality in some situations, unlike in MDPs, the probability of these situations happening can be made non-negligible by player 2's adversarial responses. Thus for minimax control, Borel measurable policies, even if they exist, are in general inadequate. 
\qed
\end{rem}

\subsubsection{Some Implications on Optimality Equations/Inequalities} \label{sec-minmax-oe}

So far we have not discussed optimality equations or inequalities for the optimal value function $g^\star$.   
Basic optimality results like the ones given above can be useful in deriving such relations for some performance criteria.
We close this section with a few results and examples in this regard.

First, let us extend Prop.~\ref{prp-minmax-strm1}(ii) to parametrized policies (the proof is given in Section~\ref{sec-minmax-proof}):
 
\begin{prop} \label{prp-minmax-strm3}
For the minimax control problem defined above, suppose that $\Theta$ is a Borel space and $\zeta : \X \times \Theta \to \P(\Omega)$ is a universally measurable mapping with $\zeta(x, \theta) \in \tmmS(x)$ for all $x \in \X$ and $\theta \in \Theta$. 
Then there exist policies $\pi_1(\theta) \in \Pi_1$, $\pi_2(\theta) \in \Pi_2$, parametrized by $\theta$, such that 
\begin{enumerate}[leftmargin=0.7cm,labelwidth=!]
\item[\rm (i)] $\rho_{x}[\pi_1(\theta), \pi_2(\theta)] =  \zeta(x, \theta)$ for all $x \in \X$ and $\theta \in \Theta$; and 
\item[\rm (ii)] with $\pi_1(\theta) = \{ \mu^1_n[\theta] \}_{n \geq 0}$ and $\pi_2(\theta) = \{ \mu^2_n[\theta] \}_{n \geq 0}$, the stochastic kernels $\mu^1_n[\theta](da_n^1 \,|\, i_n)$ and $\mu^2_n[\theta](d a^2_n \,|\, h_n)$ are universally measurable in $(i_n, \theta)$ and $(h_n, \theta)$, respectively, for each $n \geq 0$.
\end{enumerate}
\end{prop}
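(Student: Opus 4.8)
The plan is to carry out the construction behind Prop.~\ref{prp-minmax-strm1}(ii) \emph{uniformly in} $\theta$; the only genuinely new content is the joint measurability in the parameter, everything else being the unparametrized argument applied for each fixed $\theta$. The key enabling fact is that the conditional-distribution kernels used in that construction can be chosen Borel measurably in the underlying measure. In addition to $\hat\nu^1_n(da^1_n \,|\, i_n; p)$ introduced before Prop.~\ref{prp-minmax-strm2}, let $\hat\nu^2_n(da^2_n \,|\, h_n; p)$ be a Borel measurable stochastic kernel on $\A_2$ given $H_n \times \P(\Omega)$ that, for each $p$, is a conditional distribution of $a^2_n$ given $h_n$ w.r.t.\ $p$; its existence follows from \cite[Cor.~7.27.1]{bs}. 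Both kernels are Borel measurable jointly in their two arguments.

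Writing $x_0$ for the initial-state coordinate read off a history (a coordinate of both $i_n$ and $h_n$), I would then set
\begin{align*}
  \mu^1_n[\theta](da^1_n \,|\, i_n) &:= \hat\nu^1_n\big(da^1_n \,\big|\, i_n;\, \zeta(x_0,\theta)\big), &
  \mu^2_n[\theta](da^2_n \,|\, h_n) &:= \hat\nu^2_n\big(da^2_n \,\big|\, h_n;\, \zeta(x_0,\theta)\big).
\end{align*}
The map $(i_n,\theta)\mapsto(i_n,\zeta(x_0,\theta))$ is universally measurable, being the pairing of the Borel coordinate map with the universally measurable $\zeta$; composing it with the Borel kernel $\hat\nu^1_n$ yields, by \cite[Prop.~7.44]{bs}, a kernel universally measurable in $(i_n,\theta)$, and likewise $\mu^2_n[\theta]$ is universally measurable in $(h_n,\theta)$. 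This already establishes claim~(ii). For each \emph{fixed} $\theta$, these kernels are precisely the action-conditionals of the strategic measure $\zeta(x,\theta)\in\mmS$; hence, by the reconstruction argument in the proof of Prop.~\ref{prp-minmax-strm1}(ii) --- which rests on the structural property that, under any measure in $\mmS$, the conditional law of $(a^1_n,a^2_n)$ given $h_n$ factors as $\mu^1_n(\cdot\,|\,i_n)\otimes\mu^2_n(\cdot\,|\,h_n)$ with the first factor $i_n$-measurable, and on the fact that state transitions follow the fixed kernel $q$ --- the policy pair $(\{\mu^1_n[\theta]\}_{n\ge0},\{\mu^2_n[\theta]\}_{n\ge0})$ started at $\delta_x$ induces exactly $\zeta(x,\theta)$, giving claim~(i).

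It remains to make the control constraints hold for \emph{all} histories and all $\theta$, whereas the kernels above satisfy $\mu^1_n[\theta](A_1(x_n)\,|\,i_n)=1$ and $\mu^2_n[\theta](A_2(x_n)\,|\,h_n)=1$ only $\zeta(x,\theta)$-almost surely. Fixing, via the Jankov--von Neumann theorem \cite[Prop.~7.49]{bs} applied to the analytic graphs $\Gamma_1,\Gamma_2$, nonrandomized stationary policies $f^1,f^2$, I would redefine each kernel to equal $\delta_{f^i(x_n)}$ on the set where its constraint fails. This is where I expect the main difficulty to lie: one must check that these exceptional sets are universally measurable \emph{jointly} in $(i_n,\theta)$ and $(h_n,\theta)$ --- which holds because $\Gamma_1,\Gamma_2$ are analytic and the maps $(i_n,\theta)\mapsto\mu^1_n[\theta](A_1(x_n)\,|\,i_n)$ and $(h_n,\theta)\mapsto\mu^2_n[\theta](A_2(x_n)\,|\,h_n)$ are then universally measurable --- and that the modification leaves the induced measure intact, which is guaranteed because for every fixed $(x,\theta)$ the exceptional set is $\zeta(x,\theta)$-null. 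These two observations are exactly what upgrade the almost-everywhere validity of the constraints, obtained for each fixed parameter, to their validity everywhere and uniformly in $\theta$, completing the proof.
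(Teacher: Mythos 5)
Your proposal is correct and follows essentially the same route as the paper: the paper also obtains $\pi_1(\theta),\pi_2(\theta)$ by substituting $\zeta(x_0,\theta)$ into the jointly Borel measurable conditional-distribution kernels $\hat\nu^1_n(da^1_n \,|\, i_n;\,p)$ and $\nu^2_n(da^2_n \,|\, h_n;\,p)$, invoking \cite[Prop.~7.44]{bs} for joint universal measurability, patching the control-constraint violation sets (universally measurable by \cite[Prop.~7.46]{bs}, and null under each $\zeta(x,\theta)$) with Jankov--von Neumann selections of $\Gamma_1,\Gamma_2$, and concluding via the product-form factorization of Lemma~\ref{lem-minmax-strm} and Lemma~\ref{lem-gen2}. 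No gaps to report.
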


We will use this proposition in proving the next result, which is about the existence of policies of player 2 that can respond to player 1's strategy in a nearly optimal way from time $1$ onwards. These policies will then be used to derive an inequality on $g^\star$ for some cost criteria.

Let  $\pi_1 : = \{ \mu^1_n \}_{n \geq 0} \in \Pi_1$ and $\pi_2 : = \{ \mu^2_n \}_{n \geq 0} \in \Pi_2$. 
Using their stochastic kernels for time $n \geq 1$, we define their ``shifted'' policies 
$\pi^s_1(\tilde i_1) : =  \{ \mu^{s,1}_n[\tilde i_1] \}_{n \geq 0} \in \Pi_1$ and $\pi^s_2(\tilde h_1) : =  \{ \mu^{s,2}_n[ \tilde h_1] \}_{n \geq 0} \in \Pi_2$ for each $\tilde i_1 : =(\tilde x_0, \tilde a_0^1) \in \X \times \A_1$ and $\tilde h_1 : = (\tilde x_0, \tilde a_0) \in \X \times \A$ as follows: for $n \geq 0, i_n \in I_n, h_n \in H_n$,
\begin{equation} \label{eq-shft-pol}
 \mu^{s,1}_n[\tilde i_1](\, \cdot \mid i_n) : = \mu^1_{n+1} (\, \cdot \mid  \tilde i_1, i_n), \qquad \mu^{s,2}_n[\tilde h_1](\, \cdot \mid h_n) : = \mu^2_{n+1} (\, \cdot \mid  \tilde h_1, h_n).
 \end{equation}

\begin{prop} \label{prp-minmax-strm4}
Consider the minimax control problem defined above. Under Assumptions~\ref{cond-minimax-abscont}-\ref{cond-minmax-obj}, for each $\bar \pi_1 \in \Pi_1$ and $\epsilon > 0$, player 2 has a policy $\pi_2 \in \Pi_2$ such that 
\begin{equation} \label{eq-ineq-player2}
  J^\star \big({\bar \pi}^s_1(\tilde i_1), \pi^s_2(\tilde h_1), x \big) \geq f_\epsilon\big({\bar \pi}^s_1(\tilde i_1), x\big) , \qquad \forall \, \tilde i_1 \in \X \times \A_1, \ \tilde h_1 = (\tilde i_1, \tilde a_0^2) \in \X \times \A,
\end{equation}
where $f_\epsilon : \Pi_1 \times \X \to \bar \R$ is given by
$f_\epsilon(\pi'_1, x) : = \sup_{\pi_2' \in \Pi_2} J^\star(\pi'_1, \pi'_2, x) - \epsilon$ if $\sup_{\pi_2' \in \Pi_2} J^\star(\pi'_1, \pi'_2, x) < + \infty$ and $f_\epsilon(\pi'_1, x) : = 1/\epsilon$ otherwise.
\end{prop}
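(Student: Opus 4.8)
The plan is to reduce the statement to a measurable-selection problem for player~2's best responses, solve it with the selection property~(d) of Section~\ref{sec-2.1} (so that analytic determinacy is \emph{not} needed here), and then reassemble the selected responses into a single policy $\pi_2$ by means of the shift. Concretely, I will first build a universally measurable family of near-best responses $\tilde\pi_2(\tilde i_1)\in\Pi_2$ to the shifted policies $\bar\pi^s_1(\tilde i_1)$, indexed by $\tilde i_1\in\X\times\A_1$, satisfying $J^\star(\bar\pi^s_1(\tilde i_1),\tilde\pi_2(\tilde i_1),x)\ge f_\epsilon(\bar\pi^s_1(\tilde i_1),x)$ for all $x$. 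Then I will define $\pi_2=\{\mu^2_n\}_{n\ge0}$ by taking $\mu^2_0$ to be any fixed Jankov--von Neumann selection for player~2 and, for $n\ge0$, setting $\mu^2_{n+1}(\,\cdot\mid\tilde h_1,h_n):=\nu^2_n[\tilde i_1](\,\cdot\mid h_n)$, where $\tilde\pi_2(\tilde i_1)=\{\nu^2_n[\tilde i_1]\}_{n\ge0}$ and $\tilde i_1=(\tilde x_0,\tilde a_0^1)$ is the player-1 part of $\tilde h_1=(\tilde x_0,\tilde a_0)$. Crucially this ignores $\tilde a_0^2$, so the shift $\pi^s_2(\tilde h_1)$ equals $\tilde\pi_2(\tilde i_1)$ for every $\tilde h_1=(\tilde i_1,\tilde a_0^2)$, which is exactly what (\ref{eq-ineq-player2}) requires. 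Since $(\tilde h_1,h_n)$ ranges over $H_{n+1}$ and $\nu^2_n[\tilde i_1](\,\cdot\mid h_n)$ will be universally measurable in $(\tilde i_1,h_n)$, each $\mu^2_{n+1}$ is a valid universally measurable kernel on $\A_2$ given $H_{n+1}$, and the control constraint is inherited from $\tilde\pi_2(\tilde i_1)\in\Pi_2$.

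To construct the family $\tilde\pi_2(\tilde i_1)$, I first apply the selection property~(d) (\cite[Props.~7.47, 7.50]{bs}) to the analytic set $\hmmS$ (Prop.~\ref{prp-minmax-strm2}(i)) and to the function $-\phi$, which is lower semianalytic because $\phi$ is upper semianalytic by Assumption~\ref{cond-minmax-obj}. This yields a universally measurable selector $\sigma$ on $\{(x,p):\hmmS(x,p)\neq\varnothing\}$ with $\sigma(x,p)\in\hmmS(x,p)$ and $\phi(\sigma(x,p))\ge\sup_{p'\in\hmmS(x,p)}\phi(p')-\epsilon$ (and $\phi(\sigma(x,p))\ge1/\epsilon$ when that supremum is $+\infty$). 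Next, fixing any $\hat\pi_2\in\Pi_2$, I set $p(\tilde i_1,x):=\rho_x[\bar\pi^s_1(\tilde i_1),\hat\pi_2]$; since the shifted kernels of $\bar\pi_1$ are universally measurable in $(\tilde i_1,i_n)$, the map $(\tilde i_1,x)\mapsto(x,p(\tilde i_1,x))$ is universally measurable by the joint measurability of the strategic-measure map (as in Theorem~\ref{thm-rep-nonrand}(i)). Composing with $\sigma$ gives a universally measurable map $\zeta(\tilde i_1,x):=\sigma(x,p(\tilde i_1,x))$, using that the composition of universally measurable maps is universally measurable. By Assumption~\ref{cond-minimax-abscont} and Prop.~\ref{prp-minmax-strm2}(ii), $\hmmS(x,p(\tilde i_1,x))=\{\rho_x[\bar\pi^s_1(\tilde i_1),\tilde\pi_2]:\tilde\pi_2\in\Pi_2\}$, so $\zeta(\tilde i_1,x)\in\tmmS(x)$ and, by Assumption~\ref{cond-minmax-obj}, $\phi(\zeta(\tilde i_1,x))\ge f_\epsilon(\bar\pi^s_1(\tilde i_1),x)$ for all $x$.

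I then apply Prop.~\ref{prp-minmax-strm3} with $\Theta=\X\times\A_1$ to the universally measurable map $\zeta$, obtaining parametrized policies $\pi_1(\tilde i_1),\pi_2(\tilde i_1)$ with $\rho_x[\pi_1(\tilde i_1),\pi_2(\tilde i_1)]=\zeta(\tilde i_1,x)$ and the required joint measurability of their kernels; I take $\tilde\pi_2(\tilde i_1):=\pi_2(\tilde i_1)=\{\nu^2_n[\tilde i_1]\}_{n\ge0}$. The one genuinely delicate point---and the step I expect to be the main obstacle---is that (\ref{eq-ineq-player2}) forces me to pair $\tilde\pi_2(\tilde i_1)$ not with the reconstructed $\pi_1(\tilde i_1)$ but with the \emph{actual} shifted policy $\bar\pi^s_1(\tilde i_1)$, so I must verify $\rho_x[\bar\pi^s_1(\tilde i_1),\tilde\pi_2(\tilde i_1)]=\zeta(\tilde i_1,x)$. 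This holds by the argument underlying Prop.~\ref{prp-minmax-strm2}(ii): since $\zeta(\tilde i_1,x)=\rho_x[\bar\pi^s_1(\tilde i_1),\tilde\pi_2]$ for some response, player~1's conditional action distributions given $i_n$ under $\zeta(\tilde i_1,x)$ agree $\zeta$-almost surely with the kernels of $\bar\pi^s_1(\tilde i_1)$, and likewise with those of $\pi_1(\tilde i_1)$; building the strategic measure stage by stage with player~2 held fixed at $\tilde\pi_2(\tilde i_1)$ therefore produces the same measure whether player~1 uses $\pi_1(\tilde i_1)$ or $\bar\pi^s_1(\tilde i_1)$, the two differing only off a set of $i_n$-marginal measure zero at every stage. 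Consequently $J^\star(\bar\pi^s_1(\tilde i_1),\tilde\pi_2(\tilde i_1),x)=\phi(\zeta(\tilde i_1,x))\ge f_\epsilon(\bar\pi^s_1(\tilde i_1),x)$, which together with the assembly of the first paragraph yields (\ref{eq-ineq-player2}).
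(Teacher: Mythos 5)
Your proposal is correct and follows essentially the same route as the paper's proof: an $\epsilon$-maximizing universally measurable selector for $\phi$ over the analytic set $\hmmS$ via \cite[Prop.\ 7.50]{bs} (so, as you note, analytic determinacy is indeed not needed here), composition with a jointly measurable map $(\tilde i_1, x) \mapsto \rho_x[\bar\pi_1^s(\tilde i_1), \cdot\,]$, reconstruction of parametrized policies via Prop.~\ref{prp-minmax-strm3}, assembly of $\pi_2$ by ignoring $a_0^2$, and the key identification $\rho_x[\bar\pi_1^s(\tilde i_1), \pi_2^s(\tilde h_1)] = \zeta$ via Prop.~\ref{prp-minmax-strm2}(ii) and the product-form/almost-sure agreement of kernels. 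The only (harmless) deviations are stylistic---you pair $\bar\pi_1^s(\tilde i_1)$ with a fixed reference policy $\hat\pi_2$ rather than with shifted policies of a fixed $\bar\pi_2$ ignoring $a_0^2$, and you spell out the step the paper compresses into ``by the definition of $\rho_x[\cdot,\cdot]$.''
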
 

\begin{rem} \rm 
A corollary of Prop.~\ref{prp-minmax-strm4} is that for any $\pi_1 \in \Pi_1$, $\sup_{\pi_2 \in \Pi_2} J^\star (\pi_1, \pi_2, x)$ is universally measurable in $x$. To see this, let ${\bar \pi_1}^s(\cdot) \equiv \pi_1$ and for $k \geq 1$, let $\pi_{2,k}$ be a policy given by Prop.~\ref{prp-minmax-strm4} for $\epsilon = 1/k$. Then for a fixed $\tilde h_1$, observe that $J^\star \big(\pi_1, \pi^s_{2,k}(\tilde h_1), x \big)$ is a universally measurable function of $x$ and, by (\ref{eq-ineq-player2}), converges pointwise to $\sup_{\pi_2 \in \Pi_2} J^\star(\pi_1, \pi_2, x)$ as $k \to \infty$. 
\qed
\end{rem}

\begin{proof}[Proof of Prop.~\ref{prp-minmax-strm4}.]
Similarly to the proof of Theorem~\ref{thm-minmax-opt}, we first consider the partial maximization problem 
$$f(x, p) : = \sup_{p' \in \hmmS(x,p)} \phi(p'), \qquad (x, p) \in \text{proj}_{\X \times \P(\Omega)} (\hmmS) = \tmmS, $$ 
where $\phi$, the function corresponding to $J^\star$, is upper semianalytic by Assumption~\ref{cond-minmax-obj} and the set $\hmmS$ is analytic by Prop.~\ref{prp-minmax-strm2}(i). Then by \cite[Prop.\ 7.50]{bs}, for each $\epsilon > 0$, there is a universally measurable mapping 
$\xi : \tmmS \to \P(\Omega)$ with $\xi(x, p) \in \hmmS(x,p)$ for all $(x, p) \in \tmmS$ such that 
\begin{equation} \label{eq-minmax-prf4}
 \phi\big(\xi(x,p)\big) \geq \begin{cases}
            f(x, p)  - \epsilon  & \text{if} \ f(x, p) < + \infty, \\
            1/\epsilon & \text{if} \ f(x, p) = + \infty.
            \end{cases}
\end{equation}  

Let $\bar \pi_1 \in \Pi_1$ be any policy of player 1. To prove the proposition, we will construct below a policy $\pi_2$ that satisfies the requirement (\ref{eq-ineq-player2}) for $\bar \pi_1$ and a given $\epsilon > 0$. To this end, first, let $\bar \pi_2 \in \Pi_2$ be any policy of player 2 that never uses the information about $a_0^2$, so that we can simply write its ``shifted'' policies ${\bar \pi}^s_2(\tilde h_1)$ as ${\bar \pi}^s_2(\tilde i_1)$ instead, for $\tilde i_1 \in \X \times \A_1$. Note that the ``shifted'' policies ${\bar \pi}^s_1(\tilde i_1), {\bar \pi}^s_2(\tilde i_1)$ comprise stochastic kernels that are universally measurable jointly in the variable $\tilde i_1$ and their respective observable variables (cf.\ (\ref{eq-shft-pol})). Then, by using \cite[Lem.~7.28, Prop.\ 7.46]{bs} and the Dynkin system theorem~\cite[Prop.~7.24]{bs}, it is not hard to show that the mapping $\sigma$ defined as
$$ \sigma : (x, \tilde i_1)  \mapsto \rho_x \big[ {\bar \pi}^s_1(\tilde i_1),\, {\bar \pi}^s_2(\tilde i_1) \big] \in \tmmS(x)$$
is universally measurable from $\X \times ( \X \times \A_1)$ into $\mmS$. 

Since the compositions of universally measurable mappings on Borel spaces are universally measurable \cite[7.44]{bs}, it follows that the mapping
$\zeta (x, \tilde i_1) : = \xi\big(x, \sigma(x, \tilde i_1)\big)$
is universally measurable from $\X \times (\X \times \A_1)$ into $\P(\Omega)$. 
Since $\xi(x, p) \in \hmmS(x,p) \subset \tmmS(x)$ for all $(x, p) \in \tmmS$, we have $\zeta(x, \tilde i_1) \in \tmmS(x)$ for all $x \in \X$ and $\tilde i_1 \in \X \times \A_1$. By applying Prop.~\ref{prp-minmax-strm3} to this function $\zeta$ with $\theta = \tilde i_1$ and $\Theta = \X \times \A_1$, we obtain policies $\hat \pi_1(\tilde i_1), \hat \pi_2(\tilde i_1)$ that are parametrized by $\tilde i_1$ and have those properties given in Prop.~\ref{prp-minmax-strm3}(i)-(ii). Let us write $\hat \pi_2(\tilde i_1)$ as $\hat \pi_2(\tilde i_1) = \{ \hat \mu^2_{n}[\tilde i_1] \}_{n \geq 0}$ and define a policy $\pi_2 : = \{\mu_n^2 \}_{n \geq 0}$ for player 2 by letting $\mu_0^2$ be any analytically measurable selection of $\Gamma_2$ and letting 
$$ \mu_n^2( \cdot \mid h_n) : = \hat \mu^2_{n-1}[x_0, a_0^1]( \cdot \mid x_1, a_1, \ldots, x_n), \qquad h_n \in H_n, \ n \geq 1.$$
Since $\hat \pi_2(\tilde i_1)$ has the property given in Prop.~\ref{prp-minmax-strm3}(ii), $\mu_n^2$, $n \geq 1,$ are universally measurable stochastic kernels, so $\pi_2$ as defined is a valid policy in $\Pi_2$. Note that $\pi_2$ never uses the information about $a_0^2$ and denoting its ``shifted'' policies by $\pi^s_2(x_0, a_0^1)$, the above definition amounts to setting $\pi^s_2(x_0, a_0^1) = \hat \pi_2(x_0, a_0^1)$ for all $(x_0, a_0^1)\in \X \times \A_1$.

Now let us show that the policy $\pi_2$ satisfies the requirement (\ref{eq-ineq-player2}). For each $\tilde i_1 \in \X \times \A_1$,
using the property of $\hat \pi_1(\tilde i_1)$ and ${\hat \pi}_2(\tilde i_1)$ given by Prop.~\ref{prp-minmax-strm3}(i), we have
\begin{equation} \label{eq-minmax-prf5}
   \rho_x \big[ \hat \pi_1(\tilde i_1), \pi^s_2(\tilde i_1) \big]  =  \rho_x \big[ \hat \pi_1(\tilde i_1), {\hat \pi}_2(\tilde i_1) \big]  = \zeta (x, \tilde i_1)  =  \xi\big(x, \sigma(x, \tilde i_1)\big) \in \hmmS\big(x, \sigma(x, \tilde i_1)\big).
\end{equation}    
By Prop.~\ref{prp-minmax-strm2}(ii),  
\begin{equation} \label{eq-minmax-prf6}
\hmmS\big(x, \sigma(x, \tilde i_1)\big) = \big\{ \, \rho_x [ {\bar \pi}^s_1(\tilde i_1), \, \pi'_2 ] \,\big|\, \pi'_2 \in \Pi_2 \, \big\}.
\end{equation}
Thus $\rho_x [ \hat \pi_1(\tilde i_1), \pi^s_2(\tilde i_1)] \in \{  \rho_x [ {\bar \pi}^s_1(\tilde i_1),  \pi'_2 ] \mid \pi'_2 \in \Pi_2 \}$, implying $\rho_x [ \hat \pi_1(\tilde i_1),  \pi^s_2(\tilde i_1) ]  =  \rho_x [ {\bar \pi}^s_1(\tilde i_1), \pi^s_2(\tilde i_1) ]$  by the definition of $\rho_x[\cdot, \cdot]$.
Using Assumption~\ref{cond-minmax-obj} and (\ref{eq-minmax-prf4})-(\ref{eq-minmax-prf5}), we then have that
\begin{align}
 J^\star \big({\bar \pi}^s_1(\tilde i_1), \pi^s_2(\tilde i_1), x \big) & = \phi\big( \rho_x \big[ {\bar \pi}^s_1(\tilde i_1), \pi^s_2(\tilde i_1) \big] \big) \notag \\
 & =  \phi \big( \xi(x, \sigma(x, \tilde i_1)) \big)  \notag \\
 & \geq \begin{cases}
            f(x, \sigma(x, \tilde i_1))  - \epsilon  & \text{if} \ f(x, \sigma(x, \tilde i_1)) < + \infty, \\
            1/\epsilon & \text{if} \ f(x, \sigma(x, \tilde i_1)) = + \infty.
            \end{cases} \label{eq-minmax-prf7}
\end{align}
By the definition of $f$, (\ref{eq-minmax-prf6}) and Prop.~\ref{prp-minmax-strm2}(ii), 
$f(x, \sigma(x, \tilde i_1)) = \sup_{\pi'_2 \in \Pi_2} J^\star \big({\bar \pi}^s_1(\tilde i_1), \pi'_2, x \big).$
This together with (\ref{eq-minmax-prf7}) establishes the desired relation (\ref{eq-ineq-player2}) for $\pi_2$. The proof is now complete.
\end{proof} 

We now use the preceding results to derive optimality equations or inequalities for several risk-neutral criteria.
\emph{In the two examples below we assume the axiom of analytic determinacy and Assumption~\ref{cond-minimax-abscont}.} For simplicity, \emph{we also assume that $c(\cdot)$ is a bounded upper semianalytic function.}

\begin{example}[Optimality equations] \rm
First, consider the average cost criterion $\tJ^{(1)}$ given in (\ref{eq-pt-ac1}). Since $c(\cdot)$ is assumed to be bounded, for any $\pi_1 \in \Pi_1, \pi_2 \in \Pi_2, x \in \X$, we have
$\tJ^{(1)}(\pi_1, \pi_2, x) : = \E^{\pi_1, \pi_2}_x \big[ \limsup_{n \to \infty} n^{-1} \sum_{k=0}^{n-1} c(x_k, a_k) \big] = \E^{\pi_1, \pi_2}_x \big[ \limsup_{n \to \infty} n^{-1} \sum_{k=1}^{n} c(x_k, a_k) \big].$ 
Inside the last term above, take conditional expectation given $(x_0, a_0, x_1)$ to obtain
\begin{equation} \label{eq-exoe1}
 \tJ^{(1)}(\pi_1, \pi_2, x) = \E^{\pi_1, \pi_2}_x \big[ \tJ^{(1)} \big(\pi^s_1(x_0, a_0^1), \, \pi^s_2(x_0, a_0), x_1 \big) \big].
\end{equation} 
Now for $\epsilon > 0$, let $\pi^*_1 \in \Pi_1$ be an $\epsilon$-optimal policy of player 1 given by Theorem~\ref{thm-minmax-opt}.
Correspondingly, let $\pi^*_2 \in \Pi_2$ be a policy of player 2 given by Prop.~\ref{prp-minmax-strm4} for $\bar \pi_1 = \pi^*_1$, so that we have
\begin{equation} \label{eq-exoe2}
 \tJ^{(1)} \big(\pi^{*,s}_1(x_0, a_0^1), \, \pi^{*,s}_2(x_0, a_0), x_1 \big) \geq \sup_{\pi_2' \in \Pi_2} \tJ^{(1)} \big(\pi^{*,s}_1(x_0, a_0^1), \, \pi_2', x_1 \big) - \epsilon \geq g^\star(x_1) - \epsilon
\end{equation} 
for all $(x_0, a_0, x_1) \in \X \times \A \times \X$.
Then by the $\epsilon$-optimality of $\pi^*_1$ and (\ref{eq-exoe1})-(\ref{eq-exoe2}), for all $x \in \X$,
\begin{equation} 
  g^\star(x) + \epsilon \geq \tJ^{(1)}(\pi^*_1, \pi^*_2, x)  = \E^{\pi^*_1, \pi^*_2}_x \big[ \tJ^{(1)} \big(\pi^{*,s}_1(x_0, a_0^1), \, \pi^{*,s}_2(x_0, a_0), x_1 \big) \big] 
   \geq \E^{\pi^*_1, \pi^*_2}_x \big[ g^\star(x_1) \big] - \epsilon. \notag
\end{equation}
Since $\epsilon$ is arbitrary and the initial decision rule of $\pi^*_2$ can be chosen arbitrarily, this implies
\begin{equation} \label{eq-oe1}
g^\star(x) \geq  \inf_{\nu \in \P(A_1(x))} \sup_{a^2 \in A_2(x)} \int_{\A_1} \int_{\X}  g^\star(y) \, q(dy \mid x, a^1, a^2) \, \nu(d a^1 \mid x).
\end{equation}

To obtain the opposite inequality, let $\bar \pi_1$ be a policy of player 1 that applies the policy $\pi^*_1$ from time $1$ onwards; in other words, its shifted policies ${\bar \pi}^s_1(x_0, a_0^1)$ all coincide with $\pi^*_1$. Then by (\ref{eq-exoe1}) and the $\epsilon$-optimality of $\pi^*_1$, we have that 
\begin{equation} \label{eq-oe-ineq0}
g^\star(x) \leq  \sup_{\pi_2 \in \Pi_2} \tJ^{(1)}(\bar \pi_1, \pi_2, x) \leq \sup_{\pi_2 \in \Pi_2} \E^{\bar \pi_1, \pi_2}_x \big[ g^\star(x_1) \big] + \epsilon, \qquad \forall \, x \in \X.
\end{equation}
Since $\epsilon$ is arbitrary and the initial decision rule of $\bar \pi_1$ can be chosen arbitrarily, we obtain the opposite inequality to (\ref{eq-oe1}) and hence the optimality equation
\begin{equation} \label{eq-oe2}
 g^\star(x) = \inf_{\nu \in \P(A_1(x))} \sup_{a^2 \in A_2(x)} \int_{\A_1} \int_{\X}  g^\star(y) \, q(dy \mid x, a^1, a^2) \, \nu(d a^1 \mid x), \qquad \forall \, x \in \X.
\end{equation} 
Similarly, (\ref{eq-oe2}) also holds for the average cost criteria $\tJ^{(i)}$, $2 \leq i \leq 4,$ discussed in Section~\ref{sec-4.1.1}.

As another example, for $\beta \in [0,1)$, consider the $\beta$-discounted cost criterion $J_\beta(\pi_1, \pi_2, x) : = \E^{\pi_1, \pi_2}_x \big[ \sum_{k=0}^\infty \beta^k c(x_k, a_k) \big]$ (which satisfies Assumption~\ref{cond-minmax-obj} for upper semianalytic $c(\cdot)$). Denote the optimal value function in the minimax control problem by $V_\beta^*$. 
By arguments similar to the above, we have the optimality equation
\begin{equation} \label{eq-oe3}
 V_\beta^*(x) =  \inf_{\nu \in \P(A_1(x))} \sup_{a^2 \in A_2(x)} \int_{\A_1} \int_{\X} \big\{ c(x, a^1, a^2) + \beta V^*_\beta(y)\big\} \, q(dy \mid x, a^1, a^2) \, \nu(d a^1 \mid x), \quad x \in \X
\end{equation} 
and also the optimality equations for the finite-horizon setting. From these equations it follows easily that within the class of bounded universally measurable functions on $\X$, $V_\beta^*$ is the unique solution to (\ref{eq-oe3}), and that starting from the constant zero function, value iteration (with the operator defined by the right-hand side of (\ref{eq-oe3})) converges to $V_\beta^*$ uniformly over $\X$.
\qed
\end{example}

\begin{example}[Optimality inequality] \rm
Consider the standard average cost criterion $J^{(1)}$ (cf.\ (\ref{eq-crt1})). 
For bounded $c(\cdot)$, using Fatou's lemma, we have that for any $\pi_1 \in \Pi_1, \pi_2 \in \Pi_2, x \in \X$,
$$ J^{(1)}(\pi_1, \pi_2, x) : =   \limsup_{n \to \infty} n^{-1} \E^{\pi_1, \pi_2}_x \Big[ \sum_{k=0}^{n-1} c(x_k, a_k) \Big] \leq 
\E^{\pi_1, \pi_2}_x \Big[ J^{(1)} \big(\pi^s_1(x_0, a_0^1), \, \pi^s_2(x_0, a_0), x_1 \big) \Big].$$
 Applying the argument used to derive (\ref{eq-oe-ineq0}) in the previous example with the above inequality in place of (\ref{eq-exoe1}), we obtain the optimality inequality
\begin{equation} \label{eq-oe4}
 g^\star(x) \leq \inf_{\nu \in \P(A_1(x))} \sup_{a^2 \in A_2(x)} \int_{\A_1} \int_{\X}  g^\star(y) \, q(dy \mid x, a^1, a^2) \, \nu(d a^1 \mid x), \qquad \forall \, x \in \X.
\end{equation} 

If $q(dy \mid x, a^1, a^2)$ depends on $(x, a^1)$ only so that player 2 cannot affect the state transitions, (\ref{eq-oe4}) is simplified to
$g^\star(x) \leq \inf_{a^1 \in A_1(x)} \int_{\X}  g^\star(y) \, q(dy \mid x, a^1)$ for all $x \in \X$.
In this special case one can further analyze the structure of $g^\star$ by using the submartingale property of the process $\{g^\star(x_n)\}_{n \geq 0}$ under any policy pair $(\pi_1, \pi_2)$, similarly to our previous work \cite{Yu22} on average-cost MDPs.
\qed
\end{example}

\section{Proofs} \label{sec-5}

This section collects some of our proofs for Sections~\ref{sec-3}-\ref{sec-4}.
To analyze strategic measures, we will use an approach discussed in Feinberg \cite[Remark 3.3]{Fei96}, which is to work with particular versions of conditional probability distributions that are jointly measurable as functions of both the conditioning variables and the probability measures themselves. According to \cite{Fei96}, this approach was suggested by Maitra and Sudderth. 
To prepare the ground for the analysis, we first introduce some definitions and lemmas in Section~\ref{sec-proof-0}. We then give the proofs for Sections~\ref{sec-3} and \ref{sec-4} in Sections~\ref{sec3-proof} and~\ref{sec4-proof}, respectively.

\subsection{Some Definitions and Useful Lemmas} \label{sec-proof-0}

First, let us specify several stochastic kernels that will be used in our proofs to characterize the various sets of strategic measures. 
Recall that for $n \geq 0$,
$h_n : = (x_0, a_0, x_1, a_1, \ldots, x_n)$ and $h'_n : = (x_0, a_0, \ldots, x_{n}, a_{n})$; their spaces are denoted by $H_n$ and $H'_n$, respectively.
\begin{itemize}[leftmargin=0.6cm,labelwidth=!]
\item Let $\nu_n(d a_n \,|\, h_n; p)$ be a Borel measurable stochastic kernel  on $\A$ given $H_n \times \P(\Omega)$ such that for each $p \in \P(\Omega)$, it is a conditional distribution of $a_n$ given $h_n$ w.r.t.\ $p$.  
\item Let $Q_n(d x_{n+1} \,|\, h'_n; p)$ be a Borel measurable stochastic kernel on $\X$ given $H'_n \times \P(\Omega)$ such that for each $p \in \P(\Omega)$, it is a conditional distribution of $x_{n+1}$ given $h'_n$ w.r.t.\ $p$.
\item Let $\kappa (d a \,|\, x; \gamma)$ be a Borel measurable stochastic kernel on $\A$ given $\X$ and $\P(\X \times \A)$ such that for each $\gamma \in  \P(\X \times \A)$, it is a conditional distribution of $a$ given $x$ w.r.t.\ $\gamma$.
\end{itemize}
The existence of these stochastic kernels follows from \cite[Cor.\ 7.27.1]{bs}, which ``decomposes'' a Borel measurable stochastic kernel on a product space into the product of two Borel measurable stochastic kernels. Specifically, by \cite[Cor.\ 7.27.1]{bs}, if $X, Y$, and $Z$ are Borel spaces and $\psi(d(y,z) \nmid x)$ is a Borel measurable stochastic kernel on $Y \times Z$ given $X$, then there exist Borel measurable stochastic kernels $\psi_1(d y \nmid x)$ and $\psi_2(dz \nmid x, y)$ such that for all $x \in X$, 
$$\psi(B \mid x) = \int_Y \int_Z \ind_B(y, z) \, \psi_2(d z \mid x, y) \, \psi_1(dy \mid x), \qquad B \in \B(Y \times Z).$$
To obtain $\nu_n(d a_n \,|\, h_n; p)$ (resp.\ $Q_n(d x_{n+1} \,|\, h'_n; p)$), we apply \cite[Cor.\ 7.27.1]{bs} with $X = \P(\Omega)$ and with $\psi$ being the mapping that maps $p \in \P(\Omega)$ to its marginal on the space $H'_n$ (resp.\ $H_{n+1}$), 
which is a Borel measurable stochastic kernel on $H'_{n} = H_n \times \A$ (resp.\ $H_{n+1} = H'_n \times \X$) given $\P(\Omega)$. 
To obtain $\kappa (d a \,|\, x; \gamma)$, we apply \cite[Cor.\ 7.27.1]{bs} to the identity mapping $\gamma \mapsto \gamma$ on $\P(\X \times \A)$, which is a Borel measurable stochastic kernel on $\X \times \A$ given $\P(\X \times \A)$. In all these cases, the desired stochastic kernel is given by $\psi_2$.
We will use the same reasoning later to define a few other stochastic kernels.

We now give two lemmas that will be applied frequently.
The first lemma is about the association between a strategic measure and a policy. 

\begin{lemma} \label{lem-gen2}
Let $p \in \P(\Omega)$ and $\pi := \{ \mu_n\}_{n \geq 0} \in \Pi$, and let $p_0$ be the marginal distribution of $x_0$ w.r.t.\ $p$. Then 
$p = \rho_{p_0}[\pi]$ if and only if for all $n \geq 0$,
\begin{equation} \label{eq-pol-strat-meas}
  Q_n(d x_{n+1} \mid h'_n; p) = q(d x_{n+1} \mid x_n, a_n) \ \ \text{and} \ \ \  \nu_n(d a_n \mid h_n ; p) = \mu_n(d a_n \mid h_n), \ \ \  \text{$p$-almost surely}. 
\end{equation}  
\end{lemma}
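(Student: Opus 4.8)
The plan is to prove the two implications separately: the forward direction follows from the almost-sure uniqueness of regular conditional distributions, and the reverse direction is an induction on the finite-dimensional marginals of $p$.

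For the forward direction, I would suppose $p = \rho_{p_0}[\pi]$. By the construction of $\Pr^\pi_{p_0}$ from $p_0$ and the collection of kernels $\mu_0, q, \mu_1, q, \ldots$ (cf.\ \cite[Prop.~7.45]{bs}), the kernel $q(dx_{n+1} \mid x_n, a_n)$ is a conditional distribution of $x_{n+1}$ given $h'_n$ w.r.t.\ $p$, and $\mu_n(da_n \mid h_n)$ is a conditional distribution of $a_n$ given $h_n$ w.r.t.\ $p$, for every $n \geq 0$. Since $Q_n(\cdot \mid h'_n; p)$ and $\nu_n(\cdot \mid h_n; p)$ are, by their definitions at the start of Section~\ref{sec-proof-0}, also conditional distributions of the same variables w.r.t.\ $p$, and any two versions of a regular conditional distribution agree almost surely, the equalities in (\ref{eq-pol-strat-meas}) hold $p$-a.s.

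For the reverse direction, I would suppose (\ref{eq-pol-strat-meas}) holds and set $\hat p := \rho_{p_0}[\pi]$. The aim is to show that the marginals of $p$ and $\hat p$ on $H_n$ and on $H'_n$ coincide for every $n$, and then conclude $p = \hat p$ by the Dynkin system theorem \cite[Prop.~7.24]{bs}, since the finite-dimensional cylinder sets form a $\pi$-system generating $\B(\Omega)$. The induction begins with the common $x_0$-marginal $p_0$ (by hypothesis for $p$, by construction for $\hat p$). For the step from $H_n$ to $H'_n = H_n \times \A$, I would use the disintegration property of $\nu_n$ --- that for bounded Borel $f$ on $H'_n$, $\int_{H'_n} f \, dp = \int_{H_n} \bigl( \int_\A f(h_n, a_n) \, \nu_n(da_n \mid h_n; p) \bigr) \, p_{H_n}(dh_n)$, where $p_{H_n}$ denotes the $H_n$-marginal of $p$ --- and then replace $\nu_n$ by $\mu_n$ using the $p$-a.s. (hence $p_{H_n}$-a.s.) equality $\nu_n = \mu_n$. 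Comparing with the analogous factorization of $\hat p$, which holds by construction with $\mu_n$, and invoking the induction hypothesis $p_{H_n} = \hat p_{H_n}$, gives $p_{H'_n} = \hat p_{H'_n}$. The step from $H'_n$ to $H_{n+1} = H'_n \times \X$ is identical, with $Q_n = q$ $p$-a.s. playing the role of $\nu_n = \mu_n$.

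The main obstacle will be the bookkeeping in the factorization step: one must verify that the almost-sure equality of kernels can be transferred inside the integrals against the relevant marginals (which requires knowing the exceptional set is null for $p_{H_n}$, not merely for $p$), and that the disintegration identity for $p$ aligns exactly, layer by layer, with the sequential construction of $\hat p = \rho_{p_0}[\pi]$ in \cite[Prop.~7.45]{bs}. Everything else is routine once the inductive factorization is set up correctly.
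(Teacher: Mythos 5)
Your proposal is correct and follows essentially the same route as the paper's proof: the forward direction is the almost-sure uniqueness of regular conditional distributions (which the paper dismisses as ``clearly''), and the reverse direction is precisely the paper's induction on the marginals of $p$ and $\Pr^\pi_{p_0}$ over $H'_n$ (equivalently $H_n$), concluded by uniqueness of extension from the generating algebra $\cup_{n \geq 0}\B(H'_n)$. The bookkeeping point you flag --- that the $p$-null exceptional set in (\ref{eq-pol-strat-meas}) depends only on $h_n$ and hence is $p_{H_n}$-null --- is exactly the detail needed to make the paper's ``it can be verified by induction'' rigorous.
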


Let us clarify that in (\ref{eq-pol-strat-meas}) the random variables are $h'_n$, $(x_n, a_n)$, and $h_n$. By (\ref{eq-pol-strat-meas}) we mean that for $p$-almost all $\omega \in \Omega$, these two probability measures on the space of $x_{n+1}$ are equal: 
$Q_n(\cdot \,|\, h'_n(\omega); p) = q(\cdot \,|\, x_n(\omega), a_n(\omega))$, and so are these two probability measures on the space of $a_n$: $ \nu_n(\cdot \,|\, h_n(\omega) ; p) = \mu_n(\cdot \,|\, h_n(\omega))$. Similar notations will be used for other stochastic kernels involved in the proofs.

\begin{proof}[Proof of Lemma~\ref{lem-gen2}.]
If $p = \rho_{p_0}[\pi]$, then clearly (\ref{eq-pol-strat-meas}) holds for all $n \geq 0$. Conversely, using (\ref{eq-pol-strat-meas}), it can be verified by induction on $n$ that the marginals of $p$ and $\Pr^{\pi}_{p_0}$ on $\B(H'_n)$ are the same for all $n \geq 0$. Since $\cup_{n \geq 0} \B(H'_n)$ is an algebra generating $\B(\Omega)$, it follows that $p = \rho_{p_0}[\pi]$.
\end{proof} 

The next lemma will be used in proving the measurability of the sets of strategic measures induced by policies with various structures.
We need some notation and definitions before stating the lemma. If $X$ and $Y$ are Borel spaces, let $\P(X) \otimes \P(Y)$ be the set of all product probability measures on $\B(X \times Y)$ (that is, $\P(X) \otimes \P(Y) : = \{ \nu \times \nu' \mid \nu \in \P(X), \, \nu' \in \P(Y)\}$, where $\nu \times \nu'$ is the product measure associated with $\nu$ and $\nu'$ on the product $\sigma$-algebra $\B(X) \otimes \B(Y) = \B(X \times Y)$). Recall also that $\D(X)$ is the set of all Dirac measures on $X$.
We write $\omega \in \Omega$ in terms of its components also as $\omega = (\omega^1, \omega^2, \ldots)$. If $I$ is a subset of positive integers, let $\omega^{(I)}$ consist of those $i$th coordinates of $\omega$ for $i \in I$ (i.e., $\omega^{(I)} : = \{ \omega^i \}_{i \in I}$), and let $\Omega^{(I)}$ be the space of $\omega^{(I)}$. 

\begin{lemma} \label{lem-gen1}
Let $Z = \Omega^{(I)}$, $Z' = \Omega^{(I')}$, $Z'_1 =  \Omega^{(I'_1)}$, and $Z'_2 =  \Omega^{(I'_2)}$, where $I$ and $I'$ are two disjoint subsets of positive integers, and $I'_1$ and $I'_2$ form a partition of $I'$.
Let $\Psi_1(dz' \,|\, z ; p)$ and $\Psi_2(dz' \,|\, z ; p)$ be Borel measurable stochastic kernels on $Z'$ given $Z \times \P(\Omega)$.
Then these subsets of $\P(\Omega)$ are Borel:  
\begin{enumerate}[leftmargin=0.75cm,labelwidth=!]
\item[\rm (i)] $ B_1 : = \big\{ p \in \P(\Omega) \mid p \{ \Psi_1(\cdot \, | \, \omega^{(I)} ; p) = \Psi_2(\cdot \, |\, \omega^{(I)}; p) \} = 1 \big\}$;
\item[\rm (ii)] $ B_2 : = \big\{ p \in \P(\Omega) \mid p \{ \Psi_1(\cdot \, | \, \omega^{(I)} ; p) \in \D(Z') \} = 1 \big\}$;
\item[\rm (iii)] $B_3 : = \big\{ p \in \P(\Omega) \mid p \{ \Psi_1(\cdot \, | \, \omega^{(I)} ; p) \in \P(Z'_1) \otimes \P(Z'_2) \} = 1 \big\}$.
\end{enumerate}
Suppose that $\Theta$ is a Borel space and $\Psi_1$ and $\Psi_2$ depend also on a parameter $\theta \in \Theta$, that is, $\Psi_1(dz' \,|\, z ; p, \theta)$ and $\Psi_2(dz' \,|\, z ; p, \theta)$ are two Borel measurable stochastic kernels on $Z'$ given $Z \times \P(\Omega) \times \Theta$. Then the following subset of $\P(\Omega) \times \Theta$ is Borel:
\begin{enumerate}[leftmargin=0.75cm,labelwidth=!]
\item[\rm (iv)] $B_4 : = \big\{ (p, \theta) \in \P(\Omega) \times \Theta \mid p \{ \Psi_1(\cdot \, | \, \omega^{(I)} ; p, \theta) = \Psi_2(\cdot \, |\, \omega^{(I)}; p, \theta) \} = 1 \big\}$.
\end{enumerate}
\end{lemma}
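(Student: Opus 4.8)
The plan is to handle all four parts by a single template: recognize each defining event as the $\omega$-section of a Borel subset of an auxiliary product space, and then compute its $p$-probability by applying \cite[Prop.~7.29]{bs} with the ``evaluation kernel'' $p \mapsto p$. The first move is to reframe the kernels: by the equivalence between Borel measurable stochastic kernels and Borel measurable maps into a space of probability measures (\cite[Def.~7.12, Prop.~7.26, Lem.~7.28]{bs}), each $\Psi_i$ is a Borel measurable map $(z,p) \mapsto \Psi_i(\cdot \mid z; p)$ from $Z \times \P(\Omega)$ into $\P(Z')$, and likewise (for (iv)) a Borel map on $Z \times \P(\Omega) \times \Theta$.

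The key geometric input is that the three relevant target subsets of $\P(Z')$ are Borel: the diagonal $\Delta = \{(\mu,\mu') : \mu = \mu'\}$ in $\P(Z') \times \P(Z')$, the set $\D(Z')$ of Dirac measures, and the set $\P(Z'_1) \otimes \P(Z'_2)$ of product measures on $Z' = Z'_1 \times Z'_2$. Fixing countable fields $\mathcal{A}$ (resp.\ $\mathcal{A}_1, \mathcal{A}_2$) generating the appropriate Borel $\sigma$-algebras, and using that $\mu \mapsto \mu(B)$ is Borel for every Borel $B$, I would write $\Delta = \bigcap_{B \in \mathcal{A}} \{(\mu,\mu') : \mu(B) = \mu'(B)\}$, $\D(Z') = \bigcap_{B \in \mathcal{A}} \{\mu : \mu(B)(1 - \mu(B)) = 0\}$, and $\P(Z'_1) \otimes \P(Z'_2) = \bigcap_{B_1 \in \mathcal{A}_1,\, B_2 \in \mathcal{A}_2} \{\mu : \mu(B_1 \times B_2) = \mu(B_1 \times Z'_2)\,\mu(Z'_1 \times B_2)\}$, each a countable intersection of Borel sets. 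The validity of the last two characterizations rests on the uniqueness of measures agreeing on a generating $\pi$-system and on the fact that a two-valued probability measure on a Borel space is a point mass.

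To combine these for parts (i)--(iii): the set $G := \{(z,p) : \Psi_1(\cdot \mid z; p) = \Psi_2(\cdot \mid z; p)\}$ is the preimage of $\Delta$ under the Borel map $(z,p) \mapsto (\Psi_1(\cdot \mid z; p), \Psi_2(\cdot \mid z; p))$, hence Borel in $Z \times \P(\Omega)$; similarly one obtains Borel sets $G_2, G_3$ as preimages of $\D(Z')$ and $\P(Z'_1) \otimes \P(Z'_2)$ under $(z,p) \mapsto \Psi_1(\cdot \mid z; p)$. In each case the defining event equals $\{\omega : (\omega^{(I)}, p) \in G\}$, whose $p$-probability is $h(p) := \int_\Omega \ind_G(\omega^{(I)}, p)\, p(d\omega)$. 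Since the identity map $p \mapsto p$ is a Borel measurable stochastic kernel on $\Omega$ given $\P(\Omega)$ and the integrand $(\omega,p) \mapsto \ind_G(\omega^{(I)}, p)$ is bounded and Borel measurable (the map $(\omega,p) \mapsto (\omega^{(I)}, p)$ being continuous), \cite[Prop.~7.29]{bs} shows $h$ is Borel; thus $B_1 = h^{-1}(\{1\})$ is Borel, and likewise $B_2, B_3$. Part (iv) is identical with $\Theta$ adjoined to the base: the preimage of $\Delta$ is a Borel set $G_4 \subset Z \times \P(\Omega) \times \Theta$, and applying \cite[Prop.~7.29]{bs} over $\P(\Omega) \times \Theta$ with the kernel $(p,\theta) \mapsto p$ gives that $(p,\theta) \mapsto \int_\Omega \ind_{G_4}(\omega^{(I)}, p, \theta)\, p(d\omega)$ is Borel, so $B_4$ is its level set at $1$.

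I expect the main obstacle to be the second step rather than the integration step. Showing that $\D(Z')$ and $\P(Z'_1) \otimes \P(Z'_2)$ are Borel requires the generating-field characterizations above, and in particular one must justify that a $\{0,1\}$-valued probability measure on a general Borel space is a Dirac measure (via a monotone-class argument passing from the countable field to all Borel sets). Once the target sets are in hand, the passage to $B_i$ is a routine application of \cite[Prop.~7.29]{bs}, the only point of care being the use of the identity/evaluation kernel so that the measure against which we integrate is itself the variable $p$.
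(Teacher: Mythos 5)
Your proof is correct, and it shares the paper's overall skeleton: in each part you build a Borel set in an auxiliary product space whose sections capture the defining event, show that the map sending $p$ (or $(p,\theta)$) to the $p$-probability of the corresponding section is Borel, and take the level set at $1$. Where you genuinely diverge is in the key sub-arguments. For parts (i) and (iv), the paper never forms the diagonal: it fixes a countable measure-determining family $\{f_k\}$ of bounded Borel functions on $Z'$ and characterizes equality of the two kernels through equality of the countably many integrals $\int f_k \, d\Psi_i$, each Borel in the conditioning variables by \cite[Prop.~7.29]{bs}; this is equivalent to your countable-field description of the diagonal $\Delta$ (which is in fact more than needed, since $\Delta$ is closed in the metrizable space $\P(Z')\times\P(Z')$). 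For parts (ii) and (iii) the paper's route is structural: $\D(Z')$ is the image of $Z'$ under $z' \mapsto \delta_{z'}$ and $\P(Z'_1)\otimes\P(Z'_2)$ is the image of $\P(Z'_1)\times\P(Z'_2)$ under $(\nu,\nu') \mapsto \nu\times\nu'$, both homeomorphisms onto their ranges, so both sets are Borel by the injective-Borel-image (Lusin--Souslin) theorem cited from Parthasarathy. You instead characterize these sets intrinsically: Dirac measures as the $\{0,1\}$-valued measures on a countable generating field (via the monotone-class argument and the fact that a two-valued measure on a standard Borel space is a point mass), and product measures via the rectangle factorization identity plus $\pi$-$\lambda$ uniqueness. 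Your route is more elementary in that it avoids the injective-image theorem entirely, at the cost of the extra verifications you correctly flag as the real work; the paper's route is shorter given the cited machinery. Finally, for the probability-evaluation step the paper invokes \cite[Cor.~7.26.1]{bs}, whereas you apply \cite[Prop.~7.29]{bs} with the evaluation kernel $p \mapsto p$; these are interchangeable, and your identity-kernel packaging is clean and valid.
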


\begin{proof}
Since $B_1$ is a special case of $B_4$ with $\Theta = \varnothing$, to prove that these two sets are Borel, it suffices to prove that $B_4$ is Borel.
Let $\{f_k\}_{k \geq 0}$ be a countable family of bounded Borel measurable functions on $Z'$ that form a measure determining class for $\P(Z')$. 
The set 
$E: = \big\{ (p, \theta, z) \in \P(\Omega) \times \Theta \times Z \mid \Psi_1(\cdot \,|\, z ; p, \theta) = \Psi_2(\cdot \,|\, z; p, \theta) \big\}$ 
consists of those $(p, \theta, z)$ such that
$\int_{Z'} f_k(z') \, \Psi_1(dz' \,|\, z ; p, \theta)  = \int_{Z'} \, f_k(z')  \, \Psi_2(dz' \,|\, z ; p, \theta)$ for all $k \geq 0$. 
These integrals are Borel measurable in $(p, \theta, z)$ by \cite[Prop.~7.29]{bs} (since the functions $f_k$ and the stochastic kernels $\Psi_1, \Psi_2$ are Borel measurable). Therefore, the set $E$ is Borel. Let $E(p, \theta)$ denote the $(p, \theta)$-section of $E$. Then $\phi(p, \theta): = p \{ \omega^{(I)} \in E(p, \theta) \}$ is a Borel measurable function of $(p, \theta)$ by \cite[Cor.~7.26.1]{bs}. Since $B_4 =  \{ (p, \theta) \in \P(\Omega) \mid \phi(p, \theta) =1 \}$, $B_4$ is Borel and hence $B_1$ is also Borel, as discussed earlier.

Consider now $B_2$. Since $\D(Z')$ is the image of $Z'$ under the homeomorphism $z' \mapsto \delta_{z'}$, it is a Borel subset of $\P(Z')$ by \cite[Sec.\ I.3, Cor.\ 3.3]{Par67}.
Let $E : = \big\{ (p, z) \in \P(\Omega) \times Z \mid \Psi_1(\cdot \,| \, z ; p)  \in \D(Z') \big\}$. Then $E$ is the preimage of the Borel set $\D(Z')$ under the Borel measurable mapping $(z, p) \mapsto \Psi_1(dz' \,|\, z ; p)$ and therefore, $E$ is Borel. The rest of the proof is similar to the above proof for $B_4$: with $E(p)$ being the $p$-section of $E$ and with $\phi(p) : = p \{ \omega^{(I)} \in E(p) \}$, 
we have that $\phi$ is Borel measurable by \cite[Cor.~7.26.1]{bs}, and hence the set $B_2 = \{ p \in \P(\Omega) \mid \phi(p) =1 \}$ is Borel.

For $B_3$, note that $\P(Z'_1) \otimes \P(Z'_2)$ is the image of $\P(Z'_1) \times \P(Z'_2)$ under the homeomorphism $(\nu, \nu') \mapsto \nu \times \nu'$ from $\P(Z'_1) \times \P(Z'_2)$ into $\P(Z')$, and therefore, $\P(Z'_1) \otimes \P(Z'_2)$ is a Borel subset of $\P(Z')$ by \cite[Sec.\ I.3, Cor.\ 3.3]{Par67}. 
The rest of the proof is the same as that for $B_2$ with $\P(Z'_1) \otimes \P(Z'_2)$ in place of $\D(Z')$.
\end{proof} 

We will use Lemma~\ref{lem-gen1}(ii) to characterize strategic measures induced by nonrandomized policies. Blackwell \cite{Blk76} used this technique for a similar purpose and attributed it to Sudderth \cite{Sud69}.
We will use Lemma~\ref{lem-gen1}(iii)-(iv) in the proofs for the minimax control problems studied in Section~\ref{sec-4}.

\subsection{Proofs for Section~\ref{sec-3}} \label{sec3-proof}

\subsubsection{Proof of Theorem~\ref{thm-strat-m}} \label{sec-proof-1}

We will prove a more general result than Theorem~\ref{thm-strat-m}. The result and the proof arguments will be applicable not only to MDPs but also to the partially observable problems and the minimax control problems studied in Section~\ref{sec-4}.

Any strategic measure $p \in \S$ must satisfy two conditions: 
\begin{equation} \label{cond-strm1a}
p \big\{ (x_n, a_n) \in \Gamma \big\} = 1, \qquad \forall \, n \geq 0,
\end{equation}
\begin{equation} \label{cond-strm1b}
  Q_n(dx_{n+1} \mid h'_n; p) = q(d x_{n+1} \mid x_n, a_n), \ \ \  \forall \, n \geq 0, \qquad \text{$p$-almost surely},
\end{equation}
in accordance with the control constraint and the state transition dynamics in the MDP.

Now consider the strategic measures of policies with additional structural constraints. 
In particular, suppose that for each $n \geq 0$, a subset of the variables from $(x_0, a_0, \ldots, x_{n-1}, a_{n-1})$ is chosen and, together with $x_n$, they form the variable $\hat h_n$, whose space we denote by $\hat H_n$. 
Suppose also that we have a collection of constraints on the stochastic kernels $\{\mu_n\}_{n \geq 0}$ that form a policy. These constraints, one for each $\mu_n$, consist of either requirement (i) or both requirements given below:
\begin{itemize}[leftmargin=0.7cm,labelwidth=!]
\item[(i)] $\mu_n$ is a universally measurable stochastic kernel on $\A$ given $\hat H_n$; 
\item[(ii)] $\mu_n$ is nonrandomized; that is, $\mu_n(da_n \,|\, \hat h_n) \in \D(\A)$ for all $\hat h_n \in \hat H_n$.
\end{itemize}
If $\C$ is such a collection of constraints, we write $\Pi_{\C}$ for the set of all policies that satisfy $\C$, and we write $\S_{\C}$ for the set of strategic measures induced by those policies. Note that $\Pi, \Pi', \Pi_{sm}, \Pi'_{sm}$, as well as $\Pi_m$ and $\Pi'_m$, can all be viewed as $\Pi_{\C}$ for some $\C$.

Let $\hat \nu_n( da_n \nmid \hat h_n; p)$ be a Borel measurable stochastic kernel  on $\A$ given $\hat H_n \times \P(\Omega)$ such that for each $p \in \P(\Omega)$, it is a conditional distribution of $a_n$ given $\hat h_n$ w.r.t.\ $p$. (As discussed at the start of Section~\ref{sec-proof-0}, such a stochastic kernel exists.) 
A strategic measure $p \in \S_{\C}$ clearly satisfies that
\begin{equation} \label{cond-strm1c}
       \nu_n(da_n \mid h_n; p) = \hat \nu_n( da_n \mid \hat h_n; p)  \ \ \ \text{and}_{\,\C} \ \ \ \hat \nu_n( da_n \mid \hat h_n; p) \in \D(\A), \quad \forall \, n \geq 0, \ \  \text{$p$-almost surely},
\end{equation}
where by ``$\text{and}_{\,\C}$'' we mean that the condition ``$\hat \nu_n( da_n \nmid \hat h_n; p) \in \D(\A)$'' is present if and only if according to $\C$, $\mu_n$ is nonrandomized.

\begin{lemma} \label{lem-strm1}
The set $\S_{\C} = \big\{ p \in \P(\Omega) \mid p \ \text{satisfies (\ref{cond-strm1a})-(\ref{cond-strm1c})} \big\}$.
\end{lemma}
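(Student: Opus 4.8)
The plan is to prove the asserted set equality by establishing the two inclusions separately. Both directions rest on the association criterion of Lemma~\ref{lem-gen2} together with elementary manipulations of conditional distributions; the only genuine difficulty lies in the reverse inclusion, where a candidate policy read off from $p$ must be corrected on a null set so as to become an everywhere-valid member of $\Pi_{\C}$.

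For the inclusion $\S_{\C} \subseteq \{ p \mid p \text{ satisfies } (\ref{cond-strm1a})\text{--}(\ref{cond-strm1c})\}$, I would fix $p = \rho_{p_0}[\pi]$ for some $\pi = \{\mu_n\}_{n\ge 0} \in \Pi_{\C}$. Condition (\ref{cond-strm1a}) is immediate from the control constraint (\ref{eq-control-constraint}), which forces $a_n \in A(x_n)$, hence $(x_n,a_n)\in\Gamma$, $p$-almost surely; condition (\ref{cond-strm1b}) is exactly the transition half of Lemma~\ref{lem-gen2}. For (\ref{cond-strm1c}), Lemma~\ref{lem-gen2} gives $\nu_n(\cdot\mid h_n;p) = \mu_n(\cdot\mid h_n) = \mu_n(\cdot\mid\hat h_n)$ $p$-almost surely, using that $\mu_n$ depends only on $\hat h_n$ under $\C$. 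Since $\hat h_n$ is a coordinate subvector of $h_n$ that includes $x_n$, the $\sigma$-algebra it generates is coarser, and a short tower-property computation shows that the $\hat h_n$-measurable kernel $\mu_n(\cdot\mid\hat h_n)$ is itself a version of the conditional distribution of $a_n$ given $\hat h_n$ under $p$; hence it agrees $p$-almost surely with $\hat\nu_n(\cdot\mid\hat h_n;p)$. Chaining these equalities yields the first part of (\ref{cond-strm1c}), and the Dirac clause ``$\text{and}_{\,\C}$'' follows because a nonrandomized $\mu_n$ has $\mu_n(\cdot\mid\hat h_n)\in\D(\A)$.

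For the reverse inclusion, I would fix a $p$ satisfying (\ref{cond-strm1a})--(\ref{cond-strm1c}), take $p_0$ to be the marginal of $x_0$, and set as candidate kernels $\mu_n(da_n\mid\hat h_n) := \hat\nu_n(da_n\mid\hat h_n;p)$ with $p$ held fixed; these are Borel measurable stochastic kernels on $\A$ given $\hat H_n$, so they satisfy requirement (i), and Lemma~\ref{lem-gen2} will deliver $p = \rho_{p_0}[\pi]$ once the transition identity (\ref{cond-strm1b}) and the decision identity $\nu_n(\cdot\mid h_n;p)=\mu_n(\cdot\mid h_n)$ $p$-almost surely are checked—the former being (\ref{cond-strm1b}) and the latter following from the first half of (\ref{cond-strm1c}) via $\mu_n(\cdot\mid h_n)=\hat\nu_n(\cdot\mid\hat h_n;p)$. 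The main obstacle is that $\mu_n$ as defined need only satisfy the control constraint (\ref{eq-control-constraint}) and, when required, the Diracness property \emph{almost surely} rather than \emph{everywhere}, whereas a bona fide policy in $\Pi_{\C}$ must satisfy them at every $\hat h_n$. I would resolve this by a null-set patch: let $N_n\subset\hat H_n$ collect those $\hat h_n$ at which either $\hat\nu_n(A(x_n)\mid\hat h_n;p)<1$ or (in the nonrandomized case) $\hat\nu_n(\cdot\mid\hat h_n;p)\notin\D(\A)$. Since $x_n$ is a Borel function of $\hat h_n$ and $\Gamma$ is analytic, the map $\hat h_n\mapsto\hat\nu_n(A(x_n)\mid\hat h_n;p)$ is the integral of the indicator of an analytic set against the Borel kernel $\hat\nu_n$, hence upper semianalytic by \cite[Prop.\ 7.48]{bs} (as in the proof of Lemma~\ref{lem-risk-lsa}); together with the Borelness of the condition $\hat\nu_n(\cdot\mid\hat h_n;p)\in\D(\A)$, this makes $N_n$ universally measurable, and conditions (\ref{cond-strm1a}) and (\ref{cond-strm1c}) force $p\{\hat h_n\in N_n\}=0$. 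Redefining $\mu_n(\cdot\mid\hat h_n):=\delta_{\psi(x_n)}$ on $N_n$, where $\psi$ is an analytically measurable selection of $\Gamma$ supplied by the Jankov--von Neumann theorem \cite[Prop.\ 7.49]{bs}, makes each $\mu_n$ satisfy (\ref{eq-control-constraint}) and the Diracness constraint at every point while altering it only on the $p$-null set $\{\hat h_n\in N_n\}$; consequently $\pi:=\{\mu_n\}\in\Pi_{\C}$ and, by Lemma~\ref{lem-gen2}, $p=\rho_{p_0}[\pi]\in\S_{\C}$.
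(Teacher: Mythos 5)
Your proof is correct and takes essentially the same approach as the paper: both inclusions rest on Lemma~\ref{lem-gen2}, and your null-set patch of $\hat \nu_n(\cdot \mid \hat h_n; p)$ on the universally measurable sets where the control constraint or Diracness fails, using a Jankov--von Neumann selection of $\Gamma$, is exactly the paper's construction (its fixed nonrandomized stationary policy $\mu^o \in \Pi'_s$ plays the role of your $\delta_{\psi(x_n)}$). The only inessential differences are that the paper first invokes \cite[Cor.~7.27.2]{bs} to represent $p$ as a composition of the kernels $p_0, \hat \nu_n, q$ before applying Lemma~\ref{lem-gen2}, and it cites \cite[Prop.~7.46]{bs} rather than the upper-semianalytic version of \cite[Prop.~7.48]{bs} for the measurability of the exceptional sets.
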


\begin{proof}
As discussed above, every $p \in \S_\C$ satisfies (\ref{cond-strm1a})-(\ref{cond-strm1c}). 
Consider now any $p \in \P(\Omega) $ that satisfies these constraints.
By a repeated application of \cite[Cor.\ 7.27.2]{bs} to decompose the marginals of $p$ on $H'_n, H_{n+1}$, $n \geq 0$, and taking into account (\ref{cond-strm1b}) and (\ref{cond-strm1c}), we can represent $p$ as the composition of its marginal $p_0(dx_0)$ on $H_0$ with a sequence of Borel measurable stochastic kernels: 
$$p_0(dx_0), \ \hat \nu_0(da_0 \mid \hat h_0), \ q(d x_1 \mid x_0, a_0), \  \ldots, \ \hat \nu_{n}(da_{n} \mid \hat h_{n}),  \ q(d x_{n+1} \mid x_{n}, a_n), \ \ldots$$
where $\hat \nu_n(da_n \,|\, \hat h_n) = \hat \nu_n(d a_n \,|\, \hat h_n; p)$. 
(In other words, $p$ coincides with the unique probability measure on $\B(\Omega)$ determined by the above sequence.) The stochastic kernels $\{\hat \nu_n\}$ need not satisfy the control constraint in the MDP; i.e., the sets 
$D_n : = \{ \hat h_n \in \hat H_n \mid \hat \nu_n( A(x_{n}) \,|\, \hat h_n) < 1\big\}$, $n \geq 0$, need not be empty.
They also need not fully satisfy the structural constraints specified by $\C$. 
In particular, the following sets $E_n$, $n \geq 0$, need not be empty:  
$E_n : = \{ \hat h_n \in \hat H_n \mid \hat \nu_n( \cdot \,|\, \hat h_n) \not\in \D(\A) \big\}$ if according to $\C$, $\mu_n$ must be nonrandomized; and $E_n: = \varnothing$ otherwise.

We now modify $\hat \nu_n$ on $D_n \cup E_n$ for each $n \geq 0$ to construct a policy in $\Pi_\C$.
Since the stochastic kernel $\hat \nu_n$ is Borel measurable and the graph $\Gamma$ of $A(\cdot)$ is analytic, by \cite[Prop.\ 7.46]{bs} $\hat \nu_n( A(x_{n}) \,|\, \hat h_n) = \int_\A \ind_\Gamma(x_n, a_n) \, \hat \nu_n( da_n \,|\, \hat h_n)$ is a universally measurable function of $\hat h_n$, and hence the set $D_n$ is universally measurable. The proof for Lemma~\ref{lem-gen1}(ii) shows that the set $E_n$ is Borel measurable. Therefore, $D_n \cup E_n$ is universally measurable.
Since $p$ satisfies (\ref{cond-strm1a}) and (\ref{cond-strm1c}), we must have 
\begin{equation} \label{eq-lem-strm1-prf1}
    p \{ \hat h_n \in D_n \cup E_n \} = 0, \qquad \forall \,  n \geq 0.
\end{equation}   
Now define a policy $\pi: = \{\mu_n\}_{n \geq 0} \in \Pi_\C$ as follows: for each $n \geq 0$, let 
$$ \mu_n(\cdot \mid \hat h_n) : = \begin{cases} 
  \hat \nu_n(\cdot \mid \hat h_n), & \text{if} \  \hat h_n \not \in D_n \cup E_n; \\
  \mu^o (\cdot \,|\, x_n), & \text{if} \   \hat h_n \in D_n \cup E_n,
  \end{cases}
$$
where $\mu^o$ is some fixed nonrandomized stationary policy in $\Pi'_s$.
In view of (\ref{cond-strm1c})-(\ref{eq-lem-strm1-prf1}), $p$-almost surely, $\mu_n(\cdot \,|\, \hat h_n) = \nu_n(\cdot \,|\, h_n; p)$ for all $n \geq 0$. Then by Lemma~\ref{lem-gen2} $p = \rho_{p_0}[\pi]$ and hence $p \in \S_\C$.
\end{proof} 

\begin{prop} \label{prp-Sc}
The set $\S_\C$ is analytic, and it is Borel if $\Gamma$ is Borel.
\end{prop}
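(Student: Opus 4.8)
The plan is to read off the conclusion from the characterization $\S_\C = \{p \in \P(\Omega) \mid p \text{ satisfies } (\ref{cond-strm1a})\text{--}(\ref{cond-strm1c})\}$ supplied by Lemma~\ref{lem-strm1}, by verifying that each of the (countably many) defining constraints carves out an analytic subset of $\P(\Omega)$ — and a Borel subset when $\Gamma$ is Borel — and then invoking the closure of the analytic (resp.\ Borel) structure under countable intersections \cite[Prop.~7.35]{bs}.

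First I would treat the control-constraint condition (\ref{cond-strm1a}). For fixed $n$, the event $\{(x_n, a_n) \in \Gamma\}$ is the preimage of $\Gamma$ under the continuous coordinate projection $\omega \mapsto (x_n, a_n)$, hence analytic by \cite[Prop.~7.40]{bs}. Property~(a) of Section~\ref{sec-2.1} then gives that $\{p : p\{(x_n, a_n) \in \Gamma\} \geq 1\}$ is analytic, and since probabilities are bounded by $1$ this set equals $\{p : p\{(x_n, a_n) \in \Gamma\} = 1\}$. Thus (\ref{cond-strm1a}) is analytic. If $\Gamma$ is Borel, the event is Borel, the map $p \mapsto p\{(x_n, a_n) \in \Gamma\}$ is Borel measurable by \cite[Prop.~7.29]{bs}, and so the constraint defines a Borel set. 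This is the only condition whose measurability class depends on $\Gamma$.

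Next I would dispatch (\ref{cond-strm1b}) and (\ref{cond-strm1c}), both of which are \emph{always} Borel, by direct appeal to Lemma~\ref{lem-gen1}. The dynamics condition (\ref{cond-strm1b}) is an instance of Lemma~\ref{lem-gen1}(i): take $\Psi_1 = Q_n(\cdot \nmid h'_n; p)$ and $\Psi_2(dx_{n+1} \nmid h'_n; p) := q(dx_{n+1} \nmid x_n, a_n)$, the latter being a Borel kernel that happens not to depend on $p$ and to depend on $h'_n$ only through $(x_n, a_n)$; with the coordinate block of $h'_n$ as $Z$ and that of $x_{n+1}$ as $Z'$, Lemma~\ref{lem-gen1}(i) shows $\{p : p\{Q_n = q\} = 1\}$ is Borel. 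The first clause of (\ref{cond-strm1c}) is likewise Lemma~\ref{lem-gen1}(i) applied with $\Psi_1 = \nu_n$ and $\Psi_2 = \hat\nu_n$ (both regarded as kernels producing a measure on the $a_n$-coordinate given the $h_n$-coordinates, $\hat\nu_n$ simply ignoring the coordinates of $h_n$ outside $\hat h_n$), and the optional nonrandomization clause ``$\hat\nu_n(\cdot \nmid \hat h_n; p) \in \D(\A)$'' is exactly the $B_2$ case, Lemma~\ref{lem-gen1}(ii). Hence every constraint in (\ref{cond-strm1b}) and (\ref{cond-strm1c}) is Borel.

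Finally, intersecting over all $n \geq 0$: condition (\ref{cond-strm1a}) contributes an analytic set (Borel if $\Gamma$ is Borel) and the rest contribute Borel sets, so $\S_\C$ is the countable intersection of analytic sets and is therefore analytic; when $\Gamma$ is Borel every piece is Borel and $\S_\C$ is Borel. I expect no genuine obstacle here beyond careful bookkeeping: the one point requiring attention is casting $q$ and $\hat\nu_n$ as stochastic kernels on the correct product-coordinate spaces with disjoint index blocks $I, I'$ so that the literal hypotheses of Lemma~\ref{lem-gen1} are met. Everything else is routine.
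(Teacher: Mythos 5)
Your proposal is correct and follows essentially the same route as the paper's proof: both decompose $\S_\C$ via Lemma~\ref{lem-strm1} into the per-$n$ constraint sets, handle (\ref{cond-strm1b}) and (\ref{cond-strm1c}) by Lemma~\ref{lem-gen1}(i)--(ii) with exactly the kernel identifications you give, treat (\ref{cond-strm1a}) via the analyticity of the cylinder event and the fact that $\{p \mid p(D) = 1\}$ is analytic for analytic $D$ (Borel for Borel $D$), and conclude by countable intersection. The only differences are cosmetic citation choices (the paper uses \cite[Props.~7.38, 7.43, 7.25]{bs} where you use \cite[Prop.~7.40]{bs}, property (a), and \cite[Prop.~7.29]{bs}, which encode the same facts).
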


\begin{proof}
For each $n \geq 0$, let $E_n : = \{p \in \P(\Omega) \mid \text{$p$ satisfies (\ref{cond-strm1a}) for the given $n$}\}$, $F_n : = \{p \in \P(\Omega) \mid \text{$p$ satisfies (\ref{cond-strm1b}) for the given $n$}\}$, and $G_n : = \{p \in \P(\Omega) \mid \text{$p$ satisfies (\ref{cond-strm1c}) for the given $n$}\}$.
The set $F_n$ is Borel by Lemma~\ref{lem-gen1}(i), which is applied with $\Psi_1$ and $\Psi_2$ corresponding to $Q_n(dx_{n+1} \nmid h'_n; p)$ and $q(d x_{n+1} \nmid x_n, a_n)$, respectively.
The set $G_n$ is Borel by Lemma~\ref{lem-gen1}(i)-(ii), which is applied with $\Psi_1$ and $\Psi_2$ corresponding to $\hat \nu_n( da_n \nmid \hat h_n; p)$ and $\nu_n(da_n \nmid h_n; p)$, respectively.
Consider now the set $E_n$. Condition (\ref{cond-strm1a}) is the same as that $p(D) = 1$ for the set $D : = (\X \times \A)^{n} \times \Gamma \times (\X \times \A)^\infty$. Since $\Gamma$ is analytic, $D$ is an analytic subset of $\Omega$ \cite[Prop.~7.38]{bs}. Then, by \cite[Prop.~7.43]{bs}, $E_n = \{ p \in \P(\Omega) \mid p(D) = 1\}$ is analytic. If $\Gamma$ is Borel, then $D$ is Borel and hence by \cite[Prop.~7.25]{bs} $E_n$ is also Borel.
Since $\S_\C = \cap_{n \geq 0} ( E_n \cap F_n  \cap G_n)$ by Lemma~\ref{lem-strm1}, the desired conclusion follows.
\end{proof} 

For $n \geq 0$, let $\gamma_n : \P(\Omega) \to \P(\X \times \A)$ with $\gamma_n(p)$ being the marginal of $p$ on the space of $(x_n, a_n)$. Recall the stochastic kernel $\kappa$ defined in Section~\ref{sec-proof-0}. In proving Lemma~\ref{lem-strm1} and Prop.~\ref{prp-Sc}, we have proved the following corollary, which establishes Theorem~\ref{thm-strat-m} for $\S_\star \in\{ \S, \S_m\}$:

\begin{cor} \label{cor-S-Sm}
The sets $\S, \S'$, $\S_m$, and $\S_m'$ can be expressed as:
\begin{enumerate}[leftmargin=0.72cm,labelwidth=!]
\item[\rm (i)] $\S = \big\{ p \in \P(\Omega) \mid p \ \text{satisfies (\ref{cond-strm1a})-(\ref{cond-strm1b})} \big\}$;
\item[\rm (ii)] $\S_m = \S \cap M$, where $M$ is the Borel set defined by
$$M : = \left\{p \in \P(\Omega) \,\big|\,   p \big\{ \nu_n( d a_n \mid h_n; p) = \kappa \big( da_n \mid x_n; \gamma_n(p) \big), \, \forall \, n \geq 0 \big\} = 1 \right\};$$
\item[\rm (iii)] $\S' = \S \cap M'$ and $\S'_m = \S_m \cap M'$, where $M'$ is the Borel set defined by  
$$M' : = \left\{ p \in \P(\Omega) \,\big|\, p \big\{  \nu_n(d a_n \mid h_n ; p) \in \D(\A), \, \forall \, n \geq 0 \big\} = 1 \right\}.$$
\end{enumerate}
These sets of strategic measures are analytic; they are Borel if $\Gamma$ is Borel.
\end{cor}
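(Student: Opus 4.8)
The plan is to derive the corollary by specializing Lemma~\ref{lem-strm1} and Prop.~\ref{prp-Sc} to four particular constraint collections $\C$---one each for $\S$, $\S'$, $\S_m$, and $\S'_m$---and then simplifying the defining condition (\ref{cond-strm1c}) in each case so that it takes the explicit form stated. I would record at the outset that, by the proof of Prop.~\ref{prp-Sc}, $\S = \bigcap_{n\ge0}(E_n\cap F_n)$ with each $E_n$ analytic (Borel when $\Gamma$ is Borel) and each $F_n$ Borel; hence the analyticity and conditional Borel-ness assertions will all follow once each of the four sets is written as the intersection of $\S$ with a Borel set.

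For $\S$ and $\S'$ the simplification is immediate. Taking $\C$ to be the collection with $\hat h_n = h_n$ for every $n$ and no nonrandomization requirement, we have $\hat\nu_n(da_n\nmid\hat h_n;p) = \nu_n(da_n\nmid h_n;p)$, so condition (\ref{cond-strm1c}) holds trivially; Lemma~\ref{lem-strm1} then yields (i). Adding the nonrandomization requirement to each $\mu_n$ keeps $\hat h_n = h_n$ but activates the ``$\text{and}_{\,\C}$'' clause of (\ref{cond-strm1c}), reducing it to $\nu_n(da_n\nmid h_n;p)\in\D(\A)$ $p$-a.s.\ for all $n$; hence $\S' = \S\cap M'$, with $M'$ Borel by Lemma~\ref{lem-gen1}(ii) applied to $\Psi_1 = \nu_n(da_n\nmid h_n;p)$.

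The Markov cases contain the one genuinely non-mechanical step. Here I take $\hat h_n = x_n$, so $\hat\nu_n(da_n\nmid x_n;p)$ is a conditional distribution of $a_n$ given $x_n$ w.r.t.\ $p$. The crux is the identification
\[
  \hat\nu_n(da_n\nmid x_n;p) = \kappa\big(da_n\nmid x_n;\gamma_n(p)\big), \qquad \text{$p$-almost surely,}
\]
which holds because the joint law of $(x_n,a_n)$ under $p$ is exactly $\gamma_n(p)$, so $\kappa(da_n\nmid x_n;\gamma_n(p))$ is itself a conditional distribution of $a_n$ given $x_n$ w.r.t.\ $p$, and any two such conditional distributions agree $p$-a.s. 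Substituting this into (\ref{cond-strm1c}) rewrites it, for the randomized Markov constraint, as $\nu_n(da_n\nmid h_n;p) = \kappa(da_n\nmid x_n;\gamma_n(p))$ $p$-a.s.\ for all $n$, so Lemma~\ref{lem-strm1} gives $\S_m = \S\cap M$. That $M$ is Borel follows from Lemma~\ref{lem-gen1}(i) with $\Psi_1$ corresponding to $\nu_n(da_n\nmid h_n;p)$ and $\Psi_2$ to $\kappa(da_n\nmid x_n;\gamma_n(p))$, both regarded as Borel measurable stochastic kernels on $\A$ given $H_n\times\P(\Omega)$; the latter is Borel because $p\mapsto\gamma_n(p)$ is continuous and $\kappa$ is a Borel measurable stochastic kernel. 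For $\S'_m$ I impose the nonrandomization requirement as well, so (\ref{cond-strm1c}) becomes the conjunction of the $M$-condition with $\hat\nu_n(da_n\nmid x_n;p)\in\D(\A)$ $p$-a.s.; by the same identification the latter is equivalent to $\nu_n(da_n\nmid h_n;p)\in\D(\A)$ $p$-a.s., yielding $\S'_m = \S_m\cap M'$. Since $M$ and $M'$ are Borel and $\S$ is analytic (Borel if $\Gamma$ is Borel), all four sets inherit these properties, which in particular establishes Theorem~\ref{thm-strat-m} for $\S_\star\in\{\S,\S_m\}$.

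I expect the main obstacle to be exactly the conditional-distribution identification for the Markov case: one must recognize that a regular conditional distribution of $a_n$ given $x_n$ depends on $p$ only through the marginal $\gamma_n(p)$, so that the $p$-indexed kernel $\hat\nu_n$ can be replaced by the canonically chosen kernel $\kappa(\cdot\nmid\cdot;\gamma_n(p))$. This replacement is precisely what allows $M$ to be described by two explicitly given Borel stochastic kernels so that Lemma~\ref{lem-gen1}(i) applies; the remaining work is a routine matching of the constraint collections against Lemma~\ref{lem-strm1}.
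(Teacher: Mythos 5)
Your proposal is correct and takes essentially the same route as the paper: the paper obtains the corollary precisely by specializing Lemma~\ref{lem-strm1} and Proposition~\ref{prp-Sc} to the four constraint collections (with $\hat h_n = h_n$ or $\hat h_n = x_n$, with or without the nonrandomization requirement), getting the Borel measurability of $M$ and $M'$ from Lemma~\ref{lem-gen1}(i)-(ii) and the analyticity/Borel-ness claims from the decomposition of $\S$ in the proof of Proposition~\ref{prp-Sc}. Your explicit verification that $\kappa\big(da_n \mid x_n; \gamma_n(p)\big)$ is a legitimate version of the conditional distribution of $a_n$ given $x_n$ w.r.t.\ $p$ (so that it may serve as $\hat\nu_n$ in condition (\ref{cond-strm1c}) for the Markov cases) is exactly the identification the paper leaves implicit when stating $M$ in terms of $\kappa$.
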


We now consider the set $\S_s$. As in Feinberg \cite[Proof of Lem.\ 3.1]{Fei96}, we consider the mapping
$\tilde \gamma :  \P(\Omega) \to \P(\X \times \A)$ defined by $\tilde \gamma(p) : = \sum_{n = 0}^\infty 2^{-n-1} \gamma_n(p)$. 
Since $\gamma_n$ is Borel measurable for all $n \geq 0$, the mapping $\tilde \gamma$ is Borel measurable. 
Consequently, 
the mapping $(x, p ) \mapsto \kappa \big( da \,|\, x ; \tilde \gamma(p) \big)$ is also Borel measurable (since the stochastic kernel $\kappa$, defined in Section~\ref{sec-proof-0}, is Borel measurable, and compositions of Borel mappings are Borel measurable); in other words, it is a Borel measurable stochastic kernel on $\A$ given $\X \times \P(\Omega)$.

\begin{lemma} \label{lem-Ss}
We have $\S_s = \S \cap \widetilde M$ and $\S_s' = \S_s \cap M'$, where the set 
$$\widetilde M : = \left\{p \in \P(\Omega) \,\big|\,   p \big\{ \nu_n( d a_n \mid h_n; p) = \kappa \big( da_n \mid x_n; \tilde \gamma(p) \big), \forall n \geq 0 \big\} = 1 \right\}$$
is Borel, and $M'$ is the Borel set given in Cor.~\ref{cor-S-Sm}. 
Both $\S_s$ and $\S_s'$ are analytic; they are Borel if $\Gamma$ is Borel.
\end{lemma}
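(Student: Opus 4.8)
The plan is to follow the template already used for $\S$ and $\S_m$ in Corollary~\ref{cor-S-Sm}: express $\S_s$ as the intersection of the analytic set $\S$ with an explicitly described Borel set $\widetilde M$, and handle the nonrandomized version by further intersecting with the Borel set $M'$. Granting these two identities, the measurability conclusions are immediate, since $\S$ is analytic (and Borel when $\Gamma$ is Borel) by Corollary~\ref{cor-S-Sm}, while $\widetilde M$ and $M'$ are Borel: then $\S_s = \S \cap \widetilde M$ and $\S_s' = \S_s \cap M'$ are analytic, and Borel when $\Gamma$ is Borel.

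First I would verify that $\widetilde M$ is Borel. For each fixed $n$ this follows from Lemma~\ref{lem-gen1}(i), applied with $Z$ the space of $h_n$, $Z'$ the space of $a_n$, $\Psi_1 = \nu_n(da_n \mid h_n; p)$, and $\Psi_2 = \kappa(da_n \mid x_n; \tilde\gamma(p))$; the latter is a Borel measurable stochastic kernel on $\A$ given $H_n \times \P(\Omega)$ because $(h_n, p) \mapsto (x_n, \tilde\gamma(p))$ is Borel and $\kappa$ is Borel. Since the event in the definition of $\widetilde M$ holds $p$-almost surely for all $n$ simultaneously exactly when it holds $p$-a.s.\ for each $n$, $\widetilde M$ is the countable intersection of these Borel sets and hence Borel. (That $M'$ is Borel is already recorded in Corollary~\ref{cor-S-Sm}.)

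The core of the argument is the identity $\S_s = \S \cap \widetilde M$, whose engine is the mixing measure $\tilde\gamma(p) = \sum_n 2^{-n-1}\gamma_n(p)$ and its conditional kernel $\kappa(\cdot \mid x; \tilde\gamma(p))$. For $\S_s \subseteq \S \cap \widetilde M$, I would take $p = \rho_{p_0}[\mu]$ for a stationary $\mu$; by Lemma~\ref{lem-gen2}, $\nu_n(\cdot \mid h_n; p) = \mu(\cdot \mid x_n)$ $p$-a.s., so each $\gamma_n(p)$ has conditional $\mu(\cdot \mid x)$ of $a$ given $x$. A short computation on rectangles then shows the mixture $\tilde\gamma(p)$ has the same conditional, whence $\kappa(\cdot \mid x; \tilde\gamma(p)) = \mu(\cdot \mid x)$ for $x$ outside a $\tilde\gamma(p)$-marginal null set; as each marginal of $x_n$ under $p$ is absolutely continuous w.r.t.\ the $\X$-marginal $\sum_n 2^{-n-1}\gamma_n^{\X}(p)$ of $\tilde\gamma(p)$, this transfers to a $p$-a.s.\ identity and gives $p \in \widetilde M$. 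For the reverse inclusion I would mimic the proof of Lemma~\ref{lem-strm1}: set $\mu^p(\cdot \mid x) := \kappa(\cdot \mid x; \tilde\gamma(p))$ and repair it on the universally measurable set $D := \{x \mid \mu^p(A(x) \mid x) < 1\}$ using a fixed $\mu^o \in \Pi_s'$; conditioning the constraint $p\{(x_n, a_n) \in \Gamma\} = 1$ (valid as $p \in \S$) on $x_n$ and invoking the $\widetilde M$-identity yields $p\{x_n \in D\} = 0$ for all $n$, so the repaired stationary policy $\mu$ matches $\nu_n$ $p$-a.s.\ and $p = \rho_{p_0}[\mu] \in \S_s$ by Lemma~\ref{lem-gen2}. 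For $\S_s' = \S_s \cap M'$, the inclusion $\subseteq$ is clear from Lemma~\ref{lem-gen2}, and for $\supseteq$ I would further repair $\mu^p$ on the Borel set $E := \{x \mid \mu^p(\cdot \mid x) \notin \D(\A)\}$, which is again $p$-null in every coordinate $x_n$ by the $M'$-condition, obtaining a nonrandomized stationary policy that induces $p$.

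The step I expect to be the main obstacle is the mixing argument together with its measure-theoretic bookkeeping: showing that the single Borel kernel $\kappa(\cdot \mid x; \tilde\gamma(p))$ simultaneously serves as the $p$-a.s.\ conditional of $a_n$ given $x_n$ for \emph{every} $n$, and carefully passing between ``$\tilde\gamma(p)$-a.s.'' and ``$p$-a.s.'' statements through the relation between the $\X$-marginals. The accompanying measurability and policy-repair arguments are routine adaptations of those in Lemma~\ref{lem-strm1} and Corollary~\ref{cor-S-Sm}.
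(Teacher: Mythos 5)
Your proposal is correct and follows essentially the same route as the paper: Borel measurability of $\widetilde M$ via Lemma~\ref{lem-gen1}(i) applied coordinate-wise (with $\kappa(da_n \mid x_n; \tilde\gamma(p))$ as the Borel kernel $\Psi_2$) and a countable intersection, the forward inclusions via Lemma~\ref{lem-gen2} and the mixture property of $\tilde\gamma(p)$, and the reverse inclusions by repairing $\kappa(\cdot \mid x; \tilde\gamma(p))$ on the exceptional sets $D$ (and $E$) exactly as in Lemma~\ref{lem-strm1}. The only difference is cosmetic: you spell out the mixing/absolute-continuity argument for $\S_s \subset \S \cap \widetilde M$, which the paper dismisses as clear, and that detail is correct.
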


\begin{proof}
Let $B_n: =  \left\{p \in \P(\Omega) \,\big|\,  p \big\{ \nu_n( d a_n\,|\, h_n; p) = \kappa \big( da_n \,|\, x_n; \tilde \gamma(p) \big) \big\} = 1 \right\}$ for $n \geq 0$. Then the set $\widetilde M = \cap_{n \geq 0} \, B_n$.
By applying Lemma~\ref{lem-gen1}(i) to each set $B_n$, with $\Psi_1$ and $\Psi_2$ corresponding to $\nu_n( d a_n \,|\, h_n; p)$ and $\kappa \big( da_n \,|\, x_n; \tilde \gamma(p) \big)$, respectively, we obtain that $B_n$ is Borel. Hence the set $\widetilde M$ is Borel.

Clearly $\S_s \subset \S \cap \widetilde M$ and $\S_s' \subset \S \cap \widetilde M \cap M'$. 
The proof of the opposite inclusions is similar to that of Lemma~\ref{lem-strm1}.
If $p \in \S \cap \widetilde M$, we define a stationary policy $\mu \in \Pi_s$ by
\begin{equation} \label{eq-prf-Ss-pol}
 \mu(d a \mid x ) : =  \begin{cases} 
    \kappa \big( da \mid x; \tilde \gamma(p) \big) &   \text{if} \ x \not\in D; \\
      \mu^o(d a \mid x)   &  \text{if} \ x \in D,
      \end{cases}
\end{equation}
where $\mu^o \in \Pi_s$ and the set $D$ is defined by $D : = \big\{ x \in \X \mid \kappa \big( A(x) \,|\, x; \tilde \gamma(p) \big) < 1 \big\}$. If $p \in \S \cap \widetilde M \cap M'$, then we define a nonrandomized stationary policy $\mu \in \Pi'_s$ by (\ref{eq-prf-Ss-pol}) with $\mu^o \in \Pi'_s$ and with the set $D \cup E$ in place of $D$, 
where the set $E : = \big\{ x \in \X \mid \kappa \big( da \mid x; \tilde \gamma(p) \big) \not\in \D(\A)  \big\}$. 
The set $D \in  \U(\X)$ by \cite[Prop.\ 7.46]{bs} (because the mapping $(x, p) \mapsto \kappa \big( da \,|\, x; \tilde \gamma(p) \big)$ is Borel measurable and the graph $\Gamma$ of $A(\cdot)$ is analytic), and the set $E$ is Borel by the proof of Lemma~\ref{lem-gen1}(ii). So $\mu$ as defined is a valid universally measurable policy.
In view of Cor.~\ref{cor-S-Sm}(i) and (\ref{cond-strm1a}), if $p \in \S \cap \widetilde M$, then $p \{ x_n \in D, \, \forall \, n \geq 0\} = 0$, and if $p \in \S \cap \widetilde M \cap M'$, then $p \{ x_n \in D \cup E, \, \forall \, n \geq 0\} = 0$. It follows that in either case, $p$-almost surely, $\mu(da_n \,|\,  x_n) = \nu_n( d a_n \,|\,  h_n; p)$ for all $n \geq 0$, so by Lemma~\ref{lem-gen2}, $p = \rho_{p_0}[\mu]$, where $p_0$ is the distribution of $x_0$ w.r.t.\ $p$. This proves $\S_s = \S \cap \widetilde M$ and $\S_s' = \S_s \cap M'$. The last statement of the lemma then follows from Cor.~\ref{cor-S-Sm} and the Borel measurability of $\widetilde M$ and $M'$.
\end{proof} 

Theorem~\ref{thm-strat-m} has now been proved.

\subsubsection{Proof of Proposition~\ref{prp-ummap-pol}} \label{sec-proof-2}

First, we prove a result that entails Prop.~\ref{prp-ummap-pol} except for the cases $\S_s$ and $\S'_s$.
Consider again the set $\Pi_\C$ of structured policies and the associated set $\S_\C$ of strategic measures introduced in Section~\ref{sec-proof-1}.
Let $\tilde \S_\C : = \big\{ (x, p) \in \X \times \P(\Omega) \,\big|\,  p \in \S_\C, \, p_0(p) = \delta_x \big\}$ (recall that $p_0(p)$ is the initial distribution of $x_0$ w.r.t.\ $p$). 
By Prop.~\ref{prp-Sc} and the proof of Lemma~\ref{lem-ext-set}, $\tilde \S_\C$ is analytic, and it is Borel if $\Gamma$ is Borel. 
Denote the $x$-section of $\tilde \S_\C$ by $\tilde \S_\C(x)$. 

Also, corresponding to $\C$ and the associated spaces $\hat H_n$, $n \geq 0$, 
we derive a collection $\Cs$ of constraints by replacing the condition that $\mu_n$ depends on $\hat h_n \in \hat H_n$ by the condition that $\mu_n$ depends on both $\hat h_n$ and the initial state $x_0$, if $\hat h_n$ does not already include $x_0$. 
The collection of variables in $\hat h_n$ and $\{x_0\}$ is denoted by $\hat h_n^s$, and their space is denoted by $\hat H^s_n$.    
The set $\Pi_{\Cs}$ is the set of all policies satisfying the constraints in $\Cs$.

\begin{prop} \label{prp-Sc-ummap-pol}
Let $\zeta : \X \to \P(\Omega)$ be a universally measurable mapping such that $\zeta(x) \in \tilde \S_\C(x)$ for all $x \in \X$. Then there exists a policy $\pi \in \Pi_{\Cs}$ with $\zeta(x) = \rho_x[\pi]$ for all $x \in \X$. 
\end{prop}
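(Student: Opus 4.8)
The plan is to build the single policy $\pi$ by ``reading off'' the target strategic measure $\zeta(x_0)$ from the initial state and feeding it into the jointly measurable conditional kernels $\hat\nu_n(da_n \mid \hat h_n; p)$ introduced in Section~\ref{sec-proof-1}. This is precisely where the enlargement from $\C$ to $\Cs$ is needed: allowing each $\mu_n$ to depend on $x_0$ (through $\hat h_n^s$) lets the policy select, at every stage, the conditional action law belonging to $\zeta(x_0)$, which is the measure it must reproduce when the chain starts at $x_0$. Without this extra coordinate, a single policy generally cannot interpolate the distinct conditional action laws of the family $\{\zeta(x)\}_{x \in \X}$.

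First I would define, for each $n \geq 0$, the candidate kernel
\[\tilde\mu_n(da_n \mid \hat h_n^s) := \hat\nu_n\big(da_n \mid \hat h_n; \zeta(x_0)\big),\]
obtained by substituting $p = \zeta(x_0)$. Since $\zeta$ is universally measurable and $\hat\nu_n(da_n\mid\hat h_n;p)$ is Borel measurable in $(\hat h_n, p)$, the map $\hat h_n^s \mapsto \tilde\mu_n(\cdot\mid\hat h_n^s)$ is a universally measurable stochastic kernel by \cite[7.44]{bs}. These candidates need not respect the control constraint, nor be Dirac when $\C$ demands it, at every $\hat h_n^s$; so, exactly as in the proof of Lemma~\ref{lem-strm1}, I would define the exceptional sets $D_n := \{\hat h_n^s \mid \tilde\mu_n(A(x_n)\mid\hat h_n^s) < 1\}$ (universally measurable by \cite[Prop.~7.46]{bs}) and, when $\C$ requires $\mu_n$ nonrandomized, $E_n := \{\hat h_n^s \mid \tilde\mu_n(\cdot\mid\hat h_n^s)\notin\D(\A)\}$ (universally measurable by the argument for Lemma~\ref{lem-gen1}(ii)), taking $E_n=\varnothing$ otherwise. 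Setting $\mu_n := \tilde\mu_n$ off $D_n\cup E_n$ and $\mu_n := \mu^o(\cdot\mid x_n)$ on it, for a fixed $\mu^o\in\Pi'_s$, yields a genuine policy $\pi=\{\mu_n\}\in\Pi_{\Cs}$.

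To verify $\rho_x[\pi]=\zeta(x)$ I would fix $x$, set $p:=\zeta(x)$, and apply Lemma~\ref{lem-gen2}. The state-transition half of (\ref{eq-pol-strat-meas}) is immediate since $p\in\S_\C\subset\S$ satisfies (\ref{cond-strm1b}). For the action half, note that $p_0(p)=\delta_x$ forces $x_0=x$ $p$-a.s., so $\zeta(x_0)=p$ and hence $\tilde\mu_n(\cdot\mid\hat h_n^s)=\hat\nu_n(\cdot\mid\hat h_n;p)$ $p$-a.s.; by the characterization (\ref{cond-strm1c}) of $\S_\C$ from Lemma~\ref{lem-strm1}, this equals $\nu_n(\cdot\mid h_n;p)$ $p$-a.s. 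Finally, because $p$ satisfies (\ref{cond-strm1a}), the conditional law $\nu_n(\cdot\mid h_n;p)$ is supported on $A(x_n)$ $p$-a.s. (and is Dirac $p$-a.s.\ when $\C$ so requires), which forces $p\{\hat h_n^s\in D_n\cup E_n\}=0$; thus $\mu_n=\tilde\mu_n=\nu_n(\cdot\mid h_n;p)$ $p$-a.s.\ for all $n$, and Lemma~\ref{lem-gen2} gives $p=\rho_x[\pi]$.

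The main obstacle, as I see it, is bookkeeping rather than a deep difficulty: one must keep careful track of which statements hold \emph{everywhere} versus merely \emph{$p$-almost surely}, and confirm that the substitution $p=\zeta(x_0)$ transports the almost-sure identities of Lemma~\ref{lem-strm1} for $\zeta(x)$ correctly onto the trajectory law $\rho_x[\pi]$. The measurability of the candidate kernels and of the exceptional sets $D_n, E_n$ hinges on the composition rule \cite[7.44]{bs} together with the fact that $\hat\nu_n$ is \emph{Borel} (not merely universally) measurable in $p$, so it is essential that the jointly measurable conditional kernels were fixed at the outset in Section~\ref{sec-proof-0}.
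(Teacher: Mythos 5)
Your proposal is correct and follows essentially the same route as the paper's proof: substituting $p=\zeta(x_0)$ into the jointly Borel measurable kernels $\hat\nu_n$ to get universally measurable candidate kernels via \cite[Prop.~7.44]{bs}, repairing them on the universally measurable exceptional sets $D_n\cup E_n$ with a fixed $\mu^o\in\Pi'_s$, showing these sets are $\zeta(x)$-null from (\ref{cond-strm1a}) and (\ref{cond-strm1c}), and concluding with Lemma~\ref{lem-gen2}. Your verification step is, if anything, slightly more explicit than the paper's in spelling out that $p_0(p)=\delta_x$ forces $\zeta(x_0)=\zeta(x)$ $p$-almost surely and that the transition half of (\ref{eq-pol-strat-meas}) comes from (\ref{cond-strm1b}).
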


\begin{proof}
The proof is similar to that of Lemma~\ref{lem-strm1}, except that here the parametric dependence on the initial state $x$ needs to be taken into account in the construction of the policy $\pi$.

To simplify notation, let $\tilde \nu_n ( da_n \,|\, \hat h_n; x) : =  \hat \nu_n\big(d a_n \,|\, \hat h_n; \zeta(x) \big)$ for $n \geq 0$. 
Note that as a $\P(\A)$-valued function of $(\hat h_n, x)$, $\tilde \nu_n ( da_n \,|\, \hat h_n; x)$ is the composition of the mapping $(\hat h_n, x) \mapsto \big(\hat h_n, \zeta(x)\big)$ with the mapping $(\hat h_n, p) \mapsto \hat \nu_n(d a_n \,|\, \hat h_n; p)$.  By \cite[Prop.\ 7.44]{bs} compositions of universally measurable functions on Borel spaces are universally measurable. Since $\zeta$ is universally measurable and the stochastic kernel $\hat \nu_n(da_n \,|\, \hat h_n; p)$ is Borel measurable, it then follows from \cite[Prop.\ 7.44]{bs} that the stochastic kernel $\tilde \nu_n(da_n \,|\, \hat h_n; x)$ is universally measurable. 

For each $n \geq 0$, let $D_n$ be the set of points $(\hat h_n, x)$ at which $\tilde \nu_n$ violates the control constraint:
$$D_n : = \{ (\hat h_n, x) \in \hat H_n \times \X \mid  \tilde \nu_n( A(x_n) \mid \hat h_n ; \, x) < 1 \}. $$
If the structural constraints specified by $\C$ require $\mu_n$ to be nonrandomized, let
$$ E_n : = \{ (\hat h_n, x) \in \hat H_n \times \X \mid  \tilde \nu_n( \cdot \mid \hat h_n ; \, x) \not\in \D(\A) \};$$
otherwise, let $E_n : = \varnothing$.
As discussed earlier, $\tilde \nu_n$ is universally measurable and the set $\D(\A)$ is Borel, while the graph $\Gamma$ of $A(\cdot)$ is analytic by assumption.
Consequently, the sets $D_n$ and $E_n$ are universally measurable by \cite[Prop.\ 7.46]{bs} and by \cite[Cor.\ 7.44.1]{bs}, respectively. 
Moreover, since $\zeta(x) \in  \tilde \S_\C(x)$, $\zeta(x)\{ x_0 = x \} = 1$ and $\zeta(x) \in \S_\C$. 
Then by Lemma~\ref{lem-strm1}, (\ref{cond-strm1a}), and (\ref{cond-strm1c}), $\zeta(x)\big\{  \hat \nu_n\big( A(x_n) \nmid \hat h_n ; \, \zeta(x)\big)  = 1 \} = 1$,
and if $\mu_n$ must be nonrandomized according to $\C$, we also have $\zeta(x)\big\{\hat \nu_n \big( \cdot \,|\, \hat h_n ; \, \zeta(x)\big) \in \D(\A) \big\} = 1$. 
In view of the definition of $\tilde \nu_n$, these relations imply
\begin{equation} \label{eq-ummap-prf1}
\zeta(x)\{ (\hat h_n, x_0) \in D_n \cup E_n \} = 0,  \qquad \forall \, n \geq 0, \ x \in \X. 
\end{equation}

Now define a policy $\pi : = \{ \mu_n\}_{n \geq 0} \in \Pi_\Cs$ as follows: for $n \geq 0$ and $\hat h^s_n \in \hat H^s_n$,
\begin{equation} \label{eq-ummap-prf2}
   \mu_n(d a_n \mid \hat h^s_n) : = \begin{cases} 
        \tilde  \nu_n(d a_n \mid \hat h_n; \, x_0), & \text{if} \ (\hat h_n, x_0) \not\in D_n \cup E_n; \\
           \mu^o(d a_n \mid x_n), & \text{if} \ (\hat h_n, x_0)  \in D_n \cup E_n,
           \end{cases}        
\end{equation}
where $\mu^o$ is some fixed policy in $\Pi'_s$. Then by (\ref{cond-strm1c}), (\ref{eq-ummap-prf1}), and the definition of $\tilde \nu_n$, for each $x \in \X$, it holds $\zeta(x)$-almost surely that
$\mu_n(d a_n \,|\, \hat h^s_n) = \nu_n\big(d a_n \,|\, h_n; \zeta(x) \big)$ for all $n \geq 0$. By Lemma~\ref{lem-gen2} this implies $\zeta(x) = \rho_x[\pi]$ for all $x \in \X$. So $\pi$ is the desired policy in $\Pi_\Cs$.
\end{proof} 

Applying Prop.~\ref{prp-Sc-ummap-pol} with $\S_\C \in \{\S, \S', \S_m, \S'_m\}$, we obtain the conclusions of Prop.~\ref{prp-ummap-pol} for those four cases. 

The remaining two cases in Prop.~\ref{prp-ummap-pol} are $\S_\star = \S_s$ and $\S_\star = \S'_s$, with the graph of $\zeta$ contained in $\tilde \S_\star$. 
For these two cases, we construct the desired semi-stationary policy from the stochastic kernel $\kappa\big(da \,|\, x'; \tilde \gamma(\zeta(x)) \big)$, where the function $\tilde \gamma$ is as defined before Lemma~\ref{lem-Ss}. The construction is similar to those in the proofs of Lemma~\ref{lem-Ss} and Prop.~\ref{prp-Sc-ummap-pol}. 
In particular, let $\tilde \kappa(da \,|\, x' ; x) : = \kappa\big(da \,|\, x'; \tilde \gamma(\zeta(x)) \big)$.
Since $\zeta$ is universally measurable and the mapping $\tilde \gamma$ and the stochastic kernel $\kappa$ are Borel measurable, the stochastic kernel $\tilde \kappa$ is universally measurable by \cite[Prop.\ 7.44]{bs}.
Let 
$$ D : =  \big\{ (x', x) \in \X^2 \mid \tilde \kappa\big(A(x')  \mid x'; x \big) < 1 \big\}.$$
In the case $\S_\star = \S_s$, let $E : = \varnothing$, and in the case $\S_\star = \S'_s$, let
$$ E : = \big\{ (x', x) \in \X^2 \mid \tilde \kappa\big( d a  \mid x'; x \big)  \not\in \D(\A) \big\}.$$
Then $D, E \in \U(\X^2)$ by \cite[Prop.\ 7.46, Cor.\ 7.44.1]{bs} (since $\tilde \kappa$ is universally measurable, the graph $\Gamma$ of $A(\cdot)$ is analytic, and the set $\D(\A)$ is Borel).  
Moreover, for all $x \in \X$, since $\zeta(x) \in  {\tilde \S}_s(x)$, we have $\zeta(x) \{ x_0 = x\} = 1$ and $\zeta(x)\{ \tilde \kappa( A(x_n) \nmid x_n; x) = 1, \, \forall \, n \geq 0 \} = 1$ by (\ref{cond-strm1a}) and Lemma~\ref{lem-Ss}. 
In the case $\zeta(x) \in  {\tilde \S}'_s(x)$, we also have $\zeta(x)\{ \tilde \kappa ( d a_n \nmid x_n; x) \in \D(\A) , \, \forall \, n \geq 0\} = 1$ by Lemma~\ref{lem-Ss}.
Therefore,
\begin{equation} \label{eq-ummap-prf5}
\zeta(x)\{ (x_n, x_0) \in D \cup E \} = 0,  \qquad \forall \, n \geq 0, \ x \in \X. 
\end{equation}

Now define a semi-stationary policy $\mu$ as follows: for $(x_0, x') \in \X^2$,
\begin{equation} \label{eq-ummap-prf6}
   \mu(d a \mid x_0, x') : = \begin{cases} 
        \tilde  \kappa(d a \mid x' ; \, x_0), & \text{if} \ (x', x_0) \not\in D \cup E; \\
           \mu^o(d a \mid x'), & \text{if} \ (x', x_0)  \in D \cup E,
           \end{cases}        
\end{equation}
for some fixed $\mu^o \in \Pi'_s$. In the case $\S_\star = \S'_s$, this policy $\mu$ is nonrandomized.
By (\ref{eq-ummap-prf5}), the definition of $\tilde \kappa$, and Lemma~\ref{lem-Ss}, we have that for each $x \in \X$, it holds $\zeta(x)$-almost surely that
$\mu(d a_n \nmid x_0,  x_n) = \nu_n\big(d a_n \,|\, h_n; \zeta(x) \big)$ for all $n \geq 0$. By Lemma~\ref{lem-gen2} this implies $\zeta(x) = \rho_x[\mu]$ for all $x \in \X$, so $\mu$ is the desired semi-stationary policy in the case $\S_\star = \S_s$ or $\S'_s$.

The proof of Prop.~\ref{prp-ummap-pol} is now complete.

\subsubsection{Proof of Theorem~\ref{thm-rep-nonrand}} \label{sec-proof-3}

In what follows we prove a more general version of Theorem~\ref{thm-rep-nonrand} with $\Pi$ replaced by the set $\Pi_\C$ of structured policies introduced in Section~\ref{sec-proof-1}. As mentioned earlier, our proof is similar to that of Feinberg \cite[Thm.\ 1]{Fei82a} and makes use of an important lemma in Gikhman and Skorokhod~\cite[Lem.~1.2]{GiS79}, in addition to the properties of universally measurable sets and functions.

Let $\lambda$ denote the Lebesgue measure on $[0,1]$.

\begin{lemma} \label{lem-nonrand1}
For each $\pi : = \{ \mu_n \}_{n \geq 0} \in \Pi_\C$, there exist universally measurable functions $f_n : [0, 1] \times \hat H_n \to \A$, $n \geq 0$, such that for all $\hat h_n \in \hat H_n$, 
$$  f_n(\theta, \hat h_n) \in A(x_n) \ \ \ \forall \, \theta \in [0,1], \quad \text{and} \quad \mu_n(d a_n \mid \hat h_n) = (\lambda \circ \phi_{\hat h_n}^{-1}) (da_n),$$
where $\lambda \circ \phi_{\hat h_n}^{-1}$ is the image measure of $\lambda$ under the mapping $\phi_{\hat h_n}: [0,1] \to \A$ given by $\phi_{\hat h_n} : = f_n(\cdot, \hat h_n)$.
\end{lemma}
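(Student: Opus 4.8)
The plan is to represent each randomized decision rule $\mu_n$ as the pushforward (image) of Lebesgue measure under a single deterministic, jointly measurable map, by splicing a purely \emph{Borel} functional representation on the action space together with the \emph{universally measurable} dependence of $\mu_n$ on $\hat h_n$. First I would obtain a Borel functional representation valid uniformly over $\P(\A)$. Since $\A$ is a Borel space, the canonical map $\kappa^o(B \mid \nu) := \nu(B)$ is a Borel measurable stochastic kernel on $\A$ given $\P(\A)$ (cf.\ \cite[Prop.~7.26]{bs}); applying the functional representation lemma of Gikhman and Skorokhod \cite[Lem.~1.2]{GiS79} to $\kappa^o$ yields a Borel measurable map $r : [0,1] \times \P(\A) \to \A$ such that for every $\nu \in \P(\A)$ the image measure of $\lambda$ under $r(\cdot, \nu)$ equals $\nu$, i.e.\ $\lambda \circ r(\cdot, \nu)^{-1} = \nu$.

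Next I would insert the universally measurable kernel. Viewing $\mu_n$ as a measurable map $\hat h_n \mapsto \mu_n(\cdot \mid \hat h_n)$ from $(\hat H_n, \U(\hat H_n))$ into $(\P(\A), \B(\P(\A)))$, the map $(\theta, \hat h_n) \mapsto \big(\theta, \mu_n(\cdot \mid \hat h_n)\big)$ is universally measurable from $[0,1] \times \hat H_n$ into $[0,1] \times \P(\A)$. Composing with the Borel map $r$ and invoking \cite[Prop.~7.44]{bs}, the function $\tilde f_n(\theta, \hat h_n) := r\big(\theta, \mu_n(\cdot \mid \hat h_n)\big)$ is universally measurable. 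For each fixed $\hat h_n$ the section $\theta \mapsto \tilde f_n(\theta, \hat h_n) = r(\theta, \nu)$ (with $\nu := \mu_n(\cdot \mid \hat h_n)$) is Borel measurable, so $\lambda \circ \tilde\phi_{\hat h_n}^{-1} = \nu = \mu_n(\cdot \mid \hat h_n)$, where $\tilde\phi_{\hat h_n} := \tilde f_n(\cdot, \hat h_n)$; this already delivers the required image-measure identity, but only an a.e.\ version of the admissibility constraint.

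To enforce admissibility for \emph{all} $\theta$, I would correct $\tilde f_n$ on a negligible set. Because $\mu_n(A(x_n) \mid \hat h_n) = 1$, for each $\hat h_n$ the section $\{\theta : \tilde f_n(\theta, \hat h_n) \notin A(x_n)\}$ has $\lambda$-measure $\mu_n(A(x_n)^c \mid \hat h_n) = 0$. The bad set $N_n := \{(\theta, \hat h_n) : (x_n, \tilde f_n(\theta, \hat h_n)) \notin \Gamma\}$ is universally measurable, since $\Gamma$ is analytic (hence universally measurable) and $\tilde f_n$ is universally measurable (cf.\ \cite[Prop.~7.46, Cor.~7.44.1]{bs}). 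Fixing an analytically measurable selection $\psi^o$ of $\Gamma$ (Jankov--von Neumann \cite[Prop.~7.49]{bs}), so that $\psi^o(x) \in A(x)$, I set $f_n := \psi^o(x_n)$ on $N_n$ and $f_n := \tilde f_n$ off $N_n$. Then $f_n$ is universally measurable and $f_n(\theta, \hat h_n) \in A(x_n)$ for every $\theta$; and since for each $\hat h_n$ the maps $f_n(\cdot, \hat h_n)$ and $\tilde f_n(\cdot, \hat h_n)$ agree off a $\lambda$-null set, their pushforwards coincide, giving $\lambda \circ \phi_{\hat h_n}^{-1} = \mu_n(\cdot \mid \hat h_n)$ with $\phi_{\hat h_n} := f_n(\cdot, \hat h_n)$, as required.

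The clean functional representation of Step~1 lives entirely in the Borel category, so the genuine obstacle is to \emph{transport} it through the universally measurable dependence of $\mu_n$ on $\hat h_n$ and the merely analytic constraint graph $\Gamma$ without losing measurability. This is handled by keeping the representation map $r$ Borel and relegating all the universal measurability to its argument $\mu_n(\cdot\mid\hat h_n)$, so that closure of universally measurable maps under composition (\cite[Prop.~7.44]{bs}) applies; the analytic-set preimage argument (\cite[Prop.~7.46, Cor.~7.44.1]{bs}) then controls the correction set $N_n$. The remaining verifications — joint universal measurability of the spliced $f_n$ and invariance of the pushforward under the null-set modification — are routine once the sectionwise Borel structure of $r(\cdot,\nu)$ is observed.
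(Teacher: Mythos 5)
Your proof is correct, and its skeleton matches the paper's: the Gikhman--Skorokhod representation lemma \cite[Lem.~1.2]{GiS79} followed by correction on a null set via an analytically measurable (Jankov--von Neumann) selection of $\Gamma$ \cite[Prop.~7.49]{bs}, with the same measurability bookkeeping (\cite[Props.~7.44, 7.46, Cor.~7.44.1]{bs}). Where you genuinely diverge is in how the representation lemma is deployed. The paper applies it \emph{directly} to $\mu_n$, viewed as a stochastic kernel on $\A$ given the measurable space $(\hat H_n, \U(\hat H_n))$; this exploits the lemma's validity over arbitrary measurable spaces and yields a jointly $\B([0,1]) \times \U(\hat H_n)$-measurable $f_n$ in one stroke. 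You instead apply it once, purely in the Borel category, to the canonical kernel $\nu \mapsto \nu$ on $\A$ given $\P(\A)$, obtaining a single Borel map $r : [0,1] \times \P(\A) \to \A$ with $\lambda \circ r(\cdot,\nu)^{-1} = \nu$, and then set $\tilde f_n(\theta, \hat h_n) := r\big(\theta, \mu_n(\cdot \mid \hat h_n)\big)$, confining all universal measurability to the composition step via \cite[Prop.~7.44]{bs}. Your factorization buys a policy- and stage-independent representation map, needs \cite{GiS79} only for Borel kernels, and makes the sectionwise pushforward identity immediate; the paper's direct invocation is shorter. Your device is also consistent with the paper's own toolkit---the kernel $\kappa$ in Section~\ref{sec-proof-0} is constructed by applying \cite[Cor.~7.27.1]{bs} to the identity map on $\P(\X \times \A)$ in exactly this canonical-kernel spirit. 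One point you treat as routine but should state explicitly (as the paper does): the equality $\lambda\{\theta \mid \tilde f_n(\theta,\hat h_n) \notin A(x_n)\} = \mu_n(A(x_n)^c \mid \hat h_n) = 0$ requires that $r(\cdot,\nu)^{-1}(A(x_n))$ be universally measurable (preimage of an analytic set under a Borel map) and that the image-measure identity, established on $\B(\A)$, extend uniquely to completions; the ingredients are all present in your argument, but the step deserves a sentence.
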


\begin{proof}
For each $n \geq 0$, by \cite[Lem.\ 1.2]{GiS79}, there exists a $\B([0,1]) \times \U(\hat H_n)$-measurable function 
$f_n: [0, 1] \times \hat H_n \to \A$ that meets the requirements except that the set $E: = \big\{(\theta, \hat h_n) \in [0,1] \times \hat H_n \mid f_n(\theta, \hat h_n) \not\in A(x_n) \big\}$ need not be empty. We modify $f_n$ on $E$ to define another function $\tilde f_n$ as follows. 
Let $\tilde f_n : = f_n$ on $E^c$ and $\tilde f_n(\theta, \hat h_n) : = \mu^o(x_n)$ on $E$, for some analytically measurable selection $\mu^o$ of $\Gamma$. 
Then $\tilde f_n(\theta, \hat h_n) \in A(x_n)$ for all $\theta \in [0,1], \hat h_n \in \hat H_n$.
Let us show that $\tilde f_n$ satisfies the rest of the requirements, namely, the universal measurability of $\tilde f_n$ and the relation $\mu_n(d a_n \mid \hat h_n) = (\lambda \circ \tilde \phi_{\hat h_n}^{-1}) (da_n)$ for $\tilde \phi_{\hat h_n} : = \tilde f_n(\cdot, \hat h_n)$.

The set $E^c$ can be equivalently expressed as 
$$E^c = \big\{(\theta, \hat h_n) \in [0,1] \times \hat H_n \mid (x_n, f_n(\theta, \hat h_n)\big) \in \Gamma \big\} = \psi^{-1}(\Gamma),$$
where $\psi$ denotes the mapping $(\theta, \hat h_n) \mapsto (x_n, f_n(\theta, \hat h_n))$. 
Since $\psi$ is universally measurable by the measurability of $f_n$ and the set $\Gamma$ is analytic by our model assumption, $E^c$ is universally measurable \cite[Cor.\ 7.44.1]{bs}. This, together with the measurability of $f_n$, shows that $\tilde f_n$ is universally measurable. 

For all $\hat h_n \in \hat H_n$, since $\mu_n(d a_n \mid \hat h_n) = (\lambda \circ \phi_{\hat h_n}^{-1}) (da_n)$, we have 
\begin{equation} \label{eq-prf-nonrand1}
\lambda \big(\phi_{\hat h_n}^{-1}(A(x_n))\big) = \mu_n(A(x_n) \mid \hat h_n) = 1.
\end{equation}
In the above, for the validity of the first equality, we also used the fact that the set $\phi_{\hat h_n}^{-1}(A(x_n))$ is universally measurable. To see this, note that by the measurability of $f_n$, $\phi_{\hat h_n}$ is universally measurable, and since $\Gamma$ is analytic, its $x_n$-section $A(x_n)$ is analytic by \cite[Prop.\ 7.40]{bs}. Then by \cite[Cor.\ 7.44.1]{bs} $\phi_{\hat h_n}^{-1}(A(x_n))$ is universally measurable. Now from (\ref{eq-prf-nonrand1}) it follows that
$$ \lambda(E_{\hat h_n}) = \lambda \big( \{ \theta \in [0,1] \mid f_n(\theta, \hat h_n) \not\in A(x_n) \} \big) = 0,$$
where $E_{\hat h_n}$ denotes the $\hat h_n$-section of the set $E$. Since $\tilde f_n(\cdot, \hat h_n)$ can only differ from $f_n(\cdot, \hat h_n)$ on $E_{\hat h_n}$, this implies that
$$ \lambda \big( \{ \theta \in [0,1] \mid \tilde f_n(\theta, \hat h_n) \in B \} \big) = \lambda \big( \{ \theta \in [0,1] \mid f_n(\theta, \hat h_n) \in B \} \big), \qquad \forall \,   B \in \B(\A),$$
or equivalently, expressed in terms of image measures, $\lambda \circ \tilde \phi_{\hat h_n}^{-1} = \lambda \circ \phi_{\hat h_n}^{-1}$, where $\tilde \phi_{\hat h_n} : = \tilde f_n(\cdot, \hat h_n)$. 
Since $\mu_n(d a_n \nmid \hat h_n) = (\lambda \circ \phi_{\hat h_n}^{-1}) (da_n)$, we have $\mu_n(d a_n \nmid \hat h_n) = \lambda \circ \tilde \phi_{\hat h_n}^{-1}(d a_n)$. Thus the function $\tilde f_n$ satisfies all the requirements.
\end{proof}

For each $\pi \in \Pi_\C$, consider the family of functions $\{f_n\}$ given in Lemma~\ref{lem-nonrand1}. Corresponding to each point $\bar \theta : = (\theta_0, \theta_1, \ldots) \in [0,1]^\infty$, the sequence of functions $\bar f(\bar \theta) : = \{ f_n(\theta_n, \cdot) \}_{n\geq 0}$ defines a nonrandomized universally measurable policy in $\Pi_\C$. To simplify notation, we denote this policy also by $\bar f(\bar \theta)$.

\begin{lemma} \label{lem-nonrand2}
Let $\pi \in \Pi_\C$, and consider the associated family of nonrandomized policies $\bar f(\bar \theta) \in \Pi_\C$, $\bar \theta \in [0,1]^\infty$, given by Lemma~\ref{lem-nonrand1}. The mapping $\psi :  \P(\X)  \times  [0,1]^\infty\to \P(\Omega)$ with
$\psi (p_0, \bar \theta) : = \rho_{p_0}[ \bar f(\bar \theta)]$ is universally measurable.
\end{lemma}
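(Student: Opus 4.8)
The plan is to reduce the claim to the universal measurability of the finite-dimensional marginals of $\rho_{p_0}[\bar f(\bar \theta)]$ and then to verify those marginals by induction on the stage index. By \cite[Lem.~7.28]{bs}, the mapping $\psi$ is universally measurable (as a mapping into $\P(\Omega)$) if and only if $(p_0, \bar \theta) \mapsto \psi(p_0, \bar \theta)(B)$ is universally measurable for every $B \in \B(\Omega)$. The collection $\mathcal{D}$ of all $B$ for which this holds is a Dynkin system: it contains $\Omega$, and since $\psi(p_0,\bar\theta)(\cdot)$ is a probability measure for each fixed $(p_0,\bar\theta)$, it is closed under proper differences and increasing limits, which correspond to subtraction and pointwise limits of universally measurable functions (both of which preserve universal measurability). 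Because the cylinder sets $\big\{ B \times (\X \times \A)^\infty \mid B \in \B(H'_m), \, m \geq 0\big\}$ form a $\pi$-system generating $\B(\Omega)$, the Dynkin system theorem \cite[Prop.~7.24]{bs} reduces the problem to showing, for each $m$ and each bounded Borel $g$ on $H'_m$, that
\[ \Lambda_g(p_0, \bar \theta) := \int_\Omega g \, d\rho_{p_0}[\bar f(\bar \theta)] \]
is universally measurable in $(p_0, \bar\theta)$ (the case $g = \ind_B$ handles the cylinders).

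Next I would write $\Lambda_g$ as the iterated integral obtained from the defining kernels of $\rho_{p_0}[\bar f(\bar \theta)]$ (cf.\ \cite[Prop.~7.45]{bs}): integrate $x_0$ against $p_0$, insert the action $a_k = f_k(\theta_k, \hat h_k)$ at each stage, and integrate $x_{k+1}$ against $q(dx_{k+1} \mid x_k, a_k)$. Universal measurability follows by peeling the integrals from the inside out, using two facts: (i) composing a universally measurable function with the universally measurable selection $(\theta_k, h_k) \mapsto \big(h_k, f_k(\theta_k, \hat h_k)\big)$ preserves universal measurability \cite[Prop.~7.44]{bs}; and (ii) integrating a bounded universally measurable integrand against the Borel stochastic kernel $q$ preserves universal measurability \cite[Prop.~7.46]{bs}. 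Starting from $g$ and alternately applying (ii) to integrate out $x_m, \ldots, x_1$ and (i) to substitute the actions $a_{m-1}, \ldots, a_0$, I obtain a bounded universally measurable function $g_0(\bar\theta, x_0)$, which I regard as a function of $\big((p_0,\bar\theta), x_0\big)$ that happens not to depend on $p_0$.

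The final step integrates out $x_0$. The evaluation kernel $\iota(dx_0 \mid p_0) := p_0(dx_0)$ is a Borel measurable stochastic kernel on $\X$ given $\P(\X)$, since $p_0 \mapsto p_0(B)$ is Borel for each $B \in \B(\X)$ (the defining property of $\B(\P(\X))$; cf.\ \cite[Prop.~7.26, Lem.~7.28]{bs}). Hence $\Lambda_g(p_0, \bar\theta) = \int_\X g_0(\bar\theta, x_0) \, \iota(dx_0 \mid p_0)$ is universally measurable in $(p_0, \bar\theta)$ by one more application of \cite[Prop.~7.46]{bs}. This establishes the universal measurability of $\Lambda_g$ for all bounded Borel $g$ on each $H'_m$, and the reduction of the first paragraph then yields the universal measurability of $\psi$.

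I would flag as the main obstacle the bookkeeping required to carry the parameters $(p_0, \bar\theta)$ along as inert coordinates while the state variables are integrated out one stage at a time: at each step one must check that the integrand is \emph{jointly} universally measurable in the parameters and the remaining state variables, so that \cite[Prop.~7.44]{bs} and \cite[Prop.~7.46]{bs} apply with $(p_0,\bar\theta)$ playing the role of an auxiliary parameter that is left untouched. I would also note that \cite[Prop.~7.46]{bs} is used here only for bounded universally measurable integrands against \emph{Borel} kernels ($q$ and $\iota$), which is exactly the regime already invoked in the proof of Lemma~\ref{lem-strm1}, so no strengthening of that result is needed.
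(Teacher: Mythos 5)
Your proposal is correct and follows essentially the same route as the paper's proof: both reduce via \cite[Lem.~7.28]{bs} and the Dynkin system theorem \cite[Prop.~7.24]{bs} to the cylinder sets $\B(H'_n)$, write the measure of a cylinder as the iterated integral from \cite[Prop.~7.45]{bs}, and peel the integrals using the universal measurability of the $f_k$, the Borel measurability of $q$, and the Borel measurability of $p_0 \mapsto p_0$ (your ``evaluation kernel''), invoking \cite[Prop.~7.44, Prop.~7.46]{bs}. The only cosmetic difference is that you phrase the induction for bounded Borel $g$ rather than indicators, which changes nothing.
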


\begin{proof}
By \cite[Lem.\ 7.28(b)]{bs}, it suffices to show that for each $B \in \B(\Omega)$, $\rho_{p_0}[ \bar f(\bar \theta)](B)$ is universally measurable in $(p_0, \bar \theta)$. In turn, since those Borel sets of $\Omega$ with this property form a Dynkin system, by the Dynkin system theorem \cite[Prop.\ 7.24]{bs}, it suffices to consider the collection $\cup_{n \geq 0} \, \B(H'_n)$ of sets (which correspond to those cylindrical sets that generate $\B(\Omega)$) and to show that for each $B_n \in \B(H'_n)$, $\rho_{p_0}[ \bar f(\bar \theta)]\{ h'_n \in B_n\}$ is universally measurable in $(p_0, \bar \theta)$.
Now by \cite[Prop.\ 7.45]{bs} we can express $\rho_{p_0}[ \bar f(\bar \theta)]\{ h'_n \in B_n\}$ as the iterated integral
\begin{align}
 \int_\X  \int_\X \cdots  \int_\X  & \, \ind_{B_n}\big(h_n, f_n(\theta_n, \hat h_n)\big) \, q\big(dx_n \mid x_{n-1}, f_{n-1}(\theta_{n-1}, \hat h_{n-1}) \big)   \cdots 
  q\big(dx_1 \mid x_0, f_0(\theta_0, x_0) \big)  \, p_0(d x_0).  \label{eq-prf-nonrand2}
\end{align} 
The functions $f_k$, $k \leq n$, are universally measurable by Lemma~\ref{lem-nonrand1}, the stochastic kernel $q$ is Borel measurable by our model assumption, and the identity mapping on $\P(\X)$, $p_0 \mapsto p_0$, is also Borel measurable. It then follows from \cite[Cor.\ 7.44.3, Prop.\ 7.46]{bs} that the above integral is universally measurable in $(p_0, \theta_0, \theta_1, \ldots, \theta_n)$ and therefore universally measurable in $(p_0, \bar \theta)$.
\end{proof}

Recall that $\bar \lambda$ is the countable product of Lebesgue measures on $[0,1]$. 

\begin{lemma} \label{lem-nonrand3}
Let $\pi \in \Pi_\C$ and $\bar f(\bar \theta) \in \Pi_\C$, $\bar \theta \in [0,1]^\infty$, be as in Lemma~\ref{lem-nonrand2}.
Then for all $p_0 \in \P(\X)$,
\begin{equation} \label{eq-rep-nonrand2}
  \rho_{p_0}[ \pi](B) = \int_{[0,1]^\infty} \! \rho_{p_0}[ \bar f(\bar \theta)](B) \, \bar \lambda (d \bar \theta), \qquad \forall \, B \in \B(\Omega).
\end{equation}  
\end{lemma}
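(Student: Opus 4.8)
The plan is to verify (\ref{eq-rep-nonrand2}) first on the generating algebra $\cup_{n \geq 0} \B(H'_n)$ of cylinder sets and then extend it to all of $\B(\Omega)$ by a measure-uniqueness argument. Fix $p_0 \in \P(\X)$. The left-hand side $B \mapsto \rho_{p_0}[\pi](B)$ is a probability measure on $\B(\Omega)$. For the right-hand side, by Lemma~\ref{lem-nonrand2} the map $\bar \theta \mapsto \rho_{p_0}[\bar f(\bar \theta)](B)$ is universally measurable and bounded in $[0,1]$ for each $B \in \B(\Omega)$, so $\nu(B) : = \int_{[0,1]^\infty} \rho_{p_0}[\bar f(\bar \theta)](B)\,\bar \lambda(d\bar \theta)$ is well-defined; by the monotone convergence theorem $\nu$ is countably additive, and since $\nu(\Omega) = 1$ it is again a probability measure on $\B(\Omega)$. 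Because $\cup_{n\geq0}\B(H'_n)$ is a $\pi$-system (indeed an algebra) containing $\Omega$ and generating $\B(\Omega)$, the Dynkin system theorem \cite[Prop.\ 7.24]{bs} reduces the claim to showing $\rho_{p_0}[\pi]\{h'_n \in B_n\} = \nu(\{h'_n \in B_n\})$ for every $n \geq 0$ and $B_n \in \B(H'_n)$.

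Next I would fix $n$ and $B_n$ and compute $\nu(\{h'_n \in B_n\})$ by integrating the iterated-integral expression (\ref{eq-prf-nonrand2}) for $\rho_{p_0}[\bar f(\bar \theta)]\{h'_n \in B_n\}$ over $\bar \theta$. Since the integrand depends only on $(\theta_0,\ldots,\theta_n)$, integrating out the remaining coordinates contributes a factor of $1$, leaving an integral over $[0,1]^{n+1}$ with respect to $\lambda^{n+1}$. I would then integrate out the $\theta_k$ from the innermost index $\theta_n$ outward. The variable $\theta_n$ enters only through $f_n(\theta_n, \hat h_n)$ inside $\ind_{B_n}(h_n, \cdot)$; moving the single integration $\int_{[0,1]}\lambda(d\theta_n)$ inside the state integrals and applying the image-measure identity from Lemma~\ref{lem-nonrand1}, $\int_{[0,1]} g\big(f_n(\theta_n, \hat h_n)\big)\,\lambda(d\theta_n) = \int_\A g\,d\mu_n(\cdot\nmid\hat h_n)$ with $g = \ind_{B_n}(h_n,\cdot)$, replaces the $\theta_n$-dependence by an integration against $\mu_n(da_n\nmid\hat h_n)$. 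Iterating, at stage $k$ the variable $\theta_k$ enters only through $f_k(\theta_k,\hat h_k)$ as the action argument of the transition kernel $q(dx_{k+1}\nmid x_k, \cdot)$ (or, for the final step, of the indicator), so the same identity converts it into an integration against $\mu_k(da_k\nmid\hat h_k)$. After all $n+1$ integrations, the resulting iterated integral is exactly the expression for $\rho_{p_0}[\pi]\{h'_n \in B_n\}$ furnished by \cite[Prop.\ 7.45]{bs}, which completes the cylinder-set verification and hence the proof.

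The main obstacle, and the point requiring most care, is justifying the successive interchange of each $\lambda(d\theta_k)$-integration with the state integrals and the repeated use of the image-measure formula in the universal-measurability setting. The functions $f_k$ are only universally measurable, so the intermediate functions obtained after integrating out $\theta_n,\ldots,\theta_{k+1}$---for instance $a_k \mapsto \int_\X q(dx_{k+1}\nmid x_k, a_k)\,G(x_{k+1})$ for a universally measurable $G$---must first be shown to be universally measurable before the identity $\int_{[0,1]} \Phi\big(f_k(\theta_k,\hat h_k)\big)\,\lambda(d\theta_k) = \int_\A \Phi\,d\mu_k(\cdot\nmid\hat h_k)$ may be applied; this relies on the stability of universal measurability under composition and under integration against Borel kernels \cite[Cor.\ 7.44.1, Prop.\ 7.46]{bs}. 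The reordering is legitimate because all integrands are nonnegative and bounded, but I would phrase it as moving a single one-dimensional $\theta_k$-integration inside the state integrals at each inductive step---rather than invoking a multivariate Fubini theorem for universally measurable functions over completed product measures---so as to keep the measurability bookkeeping transparent and to reduce every change of variables to the elementary image-measure identity of Lemma~\ref{lem-nonrand1}.
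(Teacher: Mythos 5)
Your proposal is correct and follows essentially the same route as the paper's proof: reduce to the generating algebra of cylinder sets $\{h'_n \in B_n\}$, $B_n \in \B(H'_n)$, then integrate the iterated-integral expression (\ref{eq-prf-nonrand2}) over $\bar\theta$, successively eliminating $\theta_n, \theta_{n-1}, \ldots, \theta_0$ via Fubini and the image-measure identity $\mu_k(da_k \nmid \hat h_k) = (\lambda \circ \phi_{\hat h_k}^{-1})(da_k)$ from Lemma~\ref{lem-nonrand1}. Your additional bookkeeping---checking that the intermediate partially integrated functions remain universally measurable so that each one-dimensional interchange and change of variables is justified---is exactly the care the paper's terser proof leaves implicit.
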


\begin{proof}
By Lemma~\ref{lem-nonrand2} and \cite[Lem.\ 7.28(b) or Prop.\ 7.46]{bs}, the integral on the right-hand side of (\ref{eq-rep-nonrand2}) is well-defined for each $B \in \B(\Omega)$. Thus the right-hand side of (\ref{eq-rep-nonrand2}) defines a probability measure on $\B(\Omega)$. To show that this measure coincides with $\rho_{p_0}[\pi]$, it suffices to show that
(\ref{eq-rep-nonrand2}) holds for all subsets of $\Omega$ of the form $\{ h'_n \in B_n\}$ with $B_n \in \B(H'_n)$ for some $n \geq 0$ (since these subsets form an algebra that generates $\B(\Omega)$). This can be verified directly by using the iterated-integral expression (\ref{eq-prf-nonrand2}) for $\rho_{p_0}[ \bar f(\bar \theta)]\{ h'_n \in B_n\}$, and by successively integrating out the variables $\theta_n, \theta_{n-1}, \ldots, \theta_0$ and applying the Fubini theorem together with the relations $\mu_k(d a_k \nmid \hat h_k) = (\lambda \circ \phi_{\hat h_k}^{-1}) (da_k), k \leq n$, provided by Lemma~\ref{lem-nonrand1}.
\end{proof}

Note that the relation (\ref{eq-rep-nonrand2}) is equivalent to (\ref{eq-rep-nonrand}) in Theorem~\ref{thm-rep-nonrand}(ii) (since a probability measure on $\U(\Omega)$ is uniquely determined by its restriction on $\B(\Omega)$). 
With Lemmas~\ref{lem-nonrand2}-\ref{lem-nonrand3}, we have thus proved Theorem~\ref{thm-rep-nonrand}(i)-(ii) with $\Pi_\C$ in place of $\Pi$. 
Letting $\Pi_\C = \Pi$ yields Theorem~\ref{thm-rep-nonrand}(i)-(ii), and letting $\Pi_\C = \Pi_m$ or $\Pi_{sm}$ yields the remaining assertions of Theorem~\ref{thm-rep-nonrand} regarding Markov or semi-Markov policies.

\subsection{Proofs for Section~\ref{sec-4}} \label{sec4-proof}

\subsubsection{Proofs of Proposition~\ref{prp-Si-pol} and Theorem~\ref{thm-pomdp-opt}} \label{sec-pomdp-proof}

In this subsection we consider the partially observable control problem defined in Section~\ref{sec-pomdp} and prove Prop.~\ref{prp-Si-pol} and Theorem~\ref{thm-pomdp-opt}. Because the proofs are almost identical to those given earlier for MDPs, we will only outline the main arguments. 

For the partially observable problem, let $h^o_n: = (x_0, z_0, a_0, \ldots, x_n)$, 
$h_n : = (x_0, z_0, a_0, \ldots, x_n, z_n)$,  and $h'_n : = (x_0, z_0, a_0, \ldots, x_n, z_n, a_n)$ for $n \geq 0$, and denote their spaces by $H^o_n$, $H_n$, and $H'_n$, respectively. For each $n \geq 0$, the stochastic kernels $\nu_n(d a_n  \,|\, h_n; p)$ and $Q_n(d x_{n+1} \,|\, h'_n; p)$ are as defined in Section~\ref{sec-proof-0} with the spaces $\Omega$, $H_n$, and $H'_n$ corresponding to those in the partially observable problem. In addition:
\begin{itemize}[leftmargin=0.65cm,labelwidth=!]
\item Let $O_n(d z_n \mid h^o_n; p )$ be a Borel measurable stochastic kernel on $\Z$ given $H^o_n \times \P(\Omega)$ such that for each $p \in \P(\Omega)$, it is a conditional distribution of $z_n$ given $h^o_n$ w.r.t.\ $p$.  
\item Let $\hat \nu_n(d a_n \,|\, i_n; p)$ be a Borel measurable stochastic kernel on $\A$ given $I_n \times \P(\Omega)$ such that for each $p \in \P(\Omega)$, it is a conditional distribution of $a_n$ given $i_n$ w.r.t.\ $p$.  
\end{itemize}
The existence of these stochastic kernels is ensured by \cite[Cor.\ 7.27.1]{bs}, as explained in Section~\ref{sec-proof-0}.

For $p \in \P(\Omega)$, consider these constraints:
\begin{align} 
p \{ (i_n, a_n ) \in \Gamma_n \} & = 1, \qquad \forall \, n \geq 0;  \label{cond-pomdp-strm1} \\
  Q_n(dx_{n+1} \mid h'_n; p) & = q(d x_{n+1} \mid x_n, a_n), \qquad  \forall \, n \geq 0, \ \  \text{$p$-almost surely};\label{cond-pomdp-strm2} \\
  O_n(d z_n \mid h^o_n; p ) & = \delta_{f(x_n)}(dz_n ),  \qquad  \forall \, n \geq 0, \ \  \text{$p$-almost surely};\label{cond-pomdp-strm3} \\
       \nu_n(da_n \mid h_n; p) & = \hat \nu_n( da_n \mid i_n; p),  \qquad \forall \, n \geq 0, \ \  \text{$p$-almost surely}; \label{cond-pomdp-strm4} \\
        \hat \nu_n( da_n \mid i_n; p) & \in \D(\A), \qquad \forall \, n \geq 0, \ \  \text{$p$-almost surely}.  \label{cond-pomdp-strm5}
\end{align}
Clearly, any strategic measure $p \in \imS$ satisfies (\ref{cond-pomdp-strm1})-(\ref{cond-pomdp-strm4}), and any $p \in \imSn$, being induced by a nonrandomized policy, satisfies (\ref{cond-pomdp-strm1})-(\ref{cond-pomdp-strm5}).
The lemma below follows from essentially the same line of argument for Lemma~\ref{lem-strm1}. 

\begin{lemma} \label{lem-pomdp-strm}
The set $\imS = \big\{ p \in \P(\Omega) \mid p \ \text{satisfies (\ref{cond-pomdp-strm1})-(\ref{cond-pomdp-strm4})} \big\}$. The set $\imSn = \big\{ p \in \P(\Omega) \mid p \ \text{satisfies (\ref{cond-pomdp-strm1})-(\ref{cond-pomdp-strm5})} \big\} = \imS \cap M'$, where the set $M'$ is as given in Cor.~\ref{cor-S-Sm}.
\end{lemma}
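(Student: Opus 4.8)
The plan is to mirror the proof of Lemma~\ref{lem-strm1} for MDPs, making two adjustments specific to the partially observable model: the presence of the observation variables $z_n$, tied deterministically to the states through (\ref{cond-pomdp-strm3}), and the fact that the controller's decision rules are measurable in the information vector $i_n$ rather than the full history $h_n$, which is exactly what (\ref{cond-pomdp-strm4}) encodes. For the forward (easy) inclusion, I would fix $p = \rho_{p_0}[\pi]$ for some $\pi = \{\mu_n\} \in \imPi$ and $p_0 \in \P(\X)$ and verify (\ref{cond-pomdp-strm1})-(\ref{cond-pomdp-strm4}) directly from the generative description of $\Pr^\pi_{p_0}$: (\ref{cond-pomdp-strm1}) follows from the control constraint $\mu_n(U_n(i_n) \mid i_n; p_0) = 1$; (\ref{cond-pomdp-strm2}) from the transition kernel $q$; (\ref{cond-pomdp-strm3}) from $z_n = f(x_n)$; and (\ref{cond-pomdp-strm4}) from the fact that under $\pi$ the conditional law of $a_n$ given $h_n$ depends on $h_n$ only through $i_n$ and coincides there with $\mu_n$, so the version $\nu_n(da_n \mid h_n; p)$ agrees $p$-a.s.\ with $\hat \nu_n(da_n \mid i_n; p)$. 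For a nonrandomized $\pi \in \imPin$, each $\mu_n(\cdot \mid i_n; p_0)$ is Dirac, forcing $\hat \nu_n(\cdot \mid i_n; p) \in \D(\A)$ $p$-a.s.\ and hence (\ref{cond-pomdp-strm5}).

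For the reverse (hard) inclusion, I would take any $p$ satisfying (\ref{cond-pomdp-strm1})-(\ref{cond-pomdp-strm4}) and, following Lemma~\ref{lem-strm1}, decompose $p$ via a repeated application of \cite[Cor.\ 7.27.2]{bs} along the coordinate order $x_0, z_0, a_0, x_1, z_1, a_1, \ldots$. Conditions (\ref{cond-pomdp-strm2})-(\ref{cond-pomdp-strm4}) identify the resulting conditional kernels as $q(dx_{n+1} \mid x_n, a_n)$, $\delta_{f(x_n)}(dz_n)$, and $\hat \nu_n(da_n \mid i_n; p)$, respectively, so that $p$ is the composition of $p_0(p)$ with these kernels. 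Since the Borel kernels $\hat \nu_n(\cdot \mid i_n; p)$ need not respect the control constraint, nor (in the nonrandomized case) be Dirac, I would introduce the universally measurable ``bad'' sets $D_n := \{ i_n \mid \hat \nu_n(U_n(i_n) \mid i_n; p) < 1 \}$ and, when nonrandomization is required, $E_n := \{ i_n \mid \hat \nu_n(\cdot \mid i_n; p) \notin \D(\A)\}$; these are universally measurable by \cite[Prop.\ 7.46]{bs} and the proof of Lemma~\ref{lem-gen1}(ii), using that $\Gamma_n$ is analytic. Conditions (\ref{cond-pomdp-strm1}), (\ref{cond-pomdp-strm4}), and (\ref{cond-pomdp-strm5}) give $p\{ i_n \in D_n \cup E_n \} = 0$, so redefining $\hat \nu_n$ on $D_n \cup E_n$ by a fixed analytically measurable selection of $\Gamma_n$ produces a valid policy $\pi \in \imPi$ (nonrandomized if $E_n$ is used) whose kernels agree $p$-a.s.\ with $\nu_n(\cdot \mid h_n; p)$.

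An analog of Lemma~\ref{lem-gen2}, proved by the same induction showing the marginals of $p$ and $\Pr^\pi_{p_0(p)}$ on $\B(H'_n)$ coincide for all $n$, then yields $p = \rho_{p_0(p)}[\pi]$, i.e.\ $p \in \imS$ (resp.\ $\imSn$). Finally, the identity $\imSn = \imS \cap M'$ follows because, for $p \in \imS$, condition (\ref{cond-pomdp-strm4}) forces $\nu_n(da_n \mid h_n; p) = \hat\nu_n(da_n \mid i_n; p)$ $p$-a.s., so the Dirac condition (\ref{cond-pomdp-strm5}) on $\hat\nu_n$ is $p$-a.s.\ equivalent to the Dirac condition on $\nu_n$ that defines $M'$ in Cor.~\ref{cor-S-Sm}.

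I expect the main subtlety, though not a genuine obstacle since it is dispatched exactly as in the MDP case, to lie in the inductive bookkeeping that the constructed $\pi$ reproduces $p$, now complicated by the extra deterministic observation step interleaved between states and actions. This is precisely why the statement asserts only that the result ``follows from essentially the same line of argument for Lemma~\ref{lem-strm1},'' and I would present the proof at the level of detail of that earlier argument, flagging the three new conditional kernels and the shift from $h_n$ to $i_n$ as the only genuinely new ingredients.
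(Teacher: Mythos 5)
Your proposal is correct and follows exactly the route the paper intends: the paper's own ``proof'' is just the remark that the lemma ``follows from essentially the same line of argument for Lemma~\ref{lem-strm1},'' and your write-up is precisely that argument adapted to the partially observable setting (decomposition via \cite[Cor.\ 7.27.2]{bs} with the three kernels $q$, $\delta_{f(x_n)}$, $\hat\nu_n$, modification on the universally measurable bad sets $D_n \cup E_n$ using analytically measurable selections of $\Gamma_n$, and a Lemma~\ref{lem-gen2}-type identification), including the correct observation that (\ref{cond-pomdp-strm4}) makes the Dirac condition (\ref{cond-pomdp-strm5}) on $\hat\nu_n$ equivalent $p$-a.s.\ to the condition defining $M'$.
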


Using Lemma~\ref{lem-pomdp-strm} and the same reasoning given in the proofs of Prop.~\ref{prp-Sc}, Cor.~\ref{cor-S-Sm}, and Lemma~\ref{lem-ext-set}, we obtain that the sets $\imS, \imSn, \timS$, and $\timSn$ are analytic, and that they are Borel if $\Gamma_n, n \geq 0,$ are Borel.
This establishes Prop.~\ref{prp-Si-pol}(i). 
Proposition~\ref{prp-Si-pol}(ii) then follows from Lemma~\ref{lem-pomdp-strm} and essentially the same proof arguments for Prop.~\ref{prp-Sc-ummap-pol}. Finally, similarly to the proof of Theorem~\ref{thm-opt} or~\ref{thm-ac-basic}, Theorem~\ref{thm-pomdp-opt} is a consequence of Prop.~\ref{prp-Si-pol}, Lemmas~\ref{lem-ac-lsa},~\ref{lem-risk-lsa}, and \cite[Props.\ 7.47, 7.50]{bs}.

\subsubsection{Proofs of Propositions~\ref{prp-minmax-strm1}-\ref{prp-minmax-strm2} and~\ref{prp-minmax-strm3}} \label{sec-minmax-proof}

In order to reuse some of the results in Section~\ref{sec3-proof} and avoid repeating similar arguments, we view the strategic measures in the minimax control problem as the strategic measures in an MDP induced by policies with particular structures. Specifically, define a set-valued mapping $A(\cdot)$ on $\X$ by $A(x): = \{ a = (a^1, a^2) \in \A_1 \times \A_2 \mid a^1 \in A_1(x), \, a^2 \in A_2(x) \}, x \in \X$. Let $\Gamma$ be the graph of $A(\cdot)$; i.e., $\Gamma : = \{ (x, a) \in \X \times \A \mid (x, a^1) \in \Gamma_1, \,  (x, a^2) \in \Gamma_2 \}$. Note that $\Gamma$ is the preimage of $\Gamma_1 \times \Gamma_2$ under the homeomorphism $(x, a^1, a^2) \mapsto (x, a^1, x, a^2)$ from $\X \times \A$ into $\X \times \A_1 \times \X \times \A_2$. Therefore, $\Gamma$ is analytic (Borel) if $\Gamma_1$ and $\Gamma_2$ are analytic (Borel) by \cite[Props.\ 7.38, 7.40]{bs}.
The strategic measures in the minimax control problem can also be viewed as the strategic measures of a subset $\hat \Pi$ of policies in the MDP whose control constraints are specified by $\Gamma$, where $\hat \Pi$ consists of those policies $\pi : = \{\mu_n\}_{n \geq 0}$ such that each $\mu_n$ has the product form
\begin{equation} \label{eq-minmax-pi}
  \mu_n(da_n \mid h_n) = \mu_n^1(da_n^1 \mid i_n) \, \mu_n^2(da_n^2 \mid h_n),
\end{equation}
for some universally measurable stochastic kernel $\mu_n^1$ on $\A_1$ given $I_n$ and some universally measurable stochastic kernel $\mu_n^2$ on $\A_2$ given $H_n$. 

Recall that $\P(\A_1) \otimes \P(\A_2)$ denotes the set of all product probability measures on $\B(\A_1 \times \A_2)$.
Recall also that for $n \geq 0$, $\hat \nu^1_n(d a^1_n \,|\, i_n; p)$ is a Borel measurable stochastic kernel on $\A_1$ given $I_n \times \P(\Omega)$ such that for each $p \in \P(\Omega)$, it is a conditional distribution of $a^1_n$ given $i_n$ w.r.t.\ $p$. 
For $i = 1, 2$, let $\nu^i_n(da^i_n \,|\, h_n; p)$ be a Borel measurable stochastic kernel on $\A_i$ given $H_n \times \P(\Omega)$ such that for each $p \in \P(\Omega)$, it is a conditional distribution of $a^i_n$ given $h_n$ w.r.t.\ $p$. 
For $p \in \P(\Omega)$, consider these constraints:
\begin{align} 
  \nu_n(da_n \mid h_n; p) & \in  \P(\A_1) \otimes \P(\A_2),  \qquad \forall \, n \geq 0, \ \  \text{$p$-almost surely}; \label{cond-minmax-strm1} \\
  \nu^1_n(da^1_n \mid h_n; p) & =  {\hat \nu}^1_n( da^1_n \mid i_n; p),  \qquad \forall \, n \geq 0, \ \  \text{$p$-almost surely}. \label{cond-minmax-strm2} 
\end{align}  
 We now use them to characterize the set $\mmS$ of strategic measures in the minimax control problem.
  
\begin{lemma} \label{lem-minmax-strm}
The set $\mmS = \big\{ p \in \P(\Omega) \mid p \ \text{satisfies (\ref{cond-strm1a})-(\ref{cond-strm1b}) and (\ref{cond-minmax-strm1})-(\ref{cond-minmax-strm2})} \big\}$. 
\end{lemma}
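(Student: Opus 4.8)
The plan is to mirror the blueprint of Lemma~\ref{lem-strm1} and Cor.~\ref{cor-S-Sm}, exploiting the reformulation in which $\mmS$ is the set of strategic measures induced by the policies in $\hat \Pi$ (those with the product structure (\ref{eq-minmax-pi})) for the MDP whose combined control constraint is $\Gamma$. For the forward inclusion I would check that every $p = \rho_{p_0}[\pi_1,\pi_2] \in \mmS$ satisfies all four conditions. Since $\hat\Pi$ is a subset of the MDP policy set, $p$ lies in the MDP strategic-measure set $\S$, hence satisfies (\ref{cond-strm1a})-(\ref{cond-strm1b}) by Cor.~\ref{cor-S-Sm}(i). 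For (\ref{cond-minmax-strm1}), Lemma~\ref{lem-gen2} shows that $\mu_n(\,\cdot \mid h_n) = \mu_n^1(\,\cdot \mid i_n)\,\mu_n^2(\,\cdot \mid h_n)$ is a version of $\nu_n(\,\cdot \mid h_n; p)$, and this kernel is manifestly a product measure, so $\nu_n(\,\cdot \mid h_n; p) \in \P(\A_1)\otimes\P(\A_2)$ $p$-a.s. For (\ref{cond-minmax-strm2}), the player-$1$ marginal of $\mu_n(\,\cdot \mid h_n)$ is $\mu_n^1(\,\cdot \mid i_n)$, a version of $\nu^1_n(\,\cdot \mid h_n; p)$ that depends on $h_n$ only through $i_n$; being $i_n$-measurable, it is simultaneously a version of the conditional given $i_n$, i.e.\ of $\hat\nu^1_n(\,\cdot \mid i_n; p)$, so the two coincide $p$-a.s.

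The substance is the reverse inclusion. Given $p$ satisfying the four conditions, I would first decouple the action kernel. By (\ref{cond-minmax-strm1}), $\nu_n(\,\cdot \mid h_n; p)$ is $p$-a.s.\ a product measure, hence equal to the product of its own two marginals; those marginals are $\nu^1_n(\,\cdot \mid h_n; p)$ and $\nu^2_n(\,\cdot \mid h_n; p)$, so $\nu_n(\,\cdot \mid h_n; p) = \nu^1_n(\,\cdot \mid h_n; p)\times\nu^2_n(\,\cdot \mid h_n; p)$ $p$-a.s. Substituting (\ref{cond-minmax-strm2}) then gives the factorization $\nu_n(\,\cdot \mid h_n; p) = \hat\nu^1_n(\,\cdot \mid i_n; p)\times\nu^2_n(\,\cdot \mid h_n; p)$ $p$-a.s., in which the first factor is a kernel on $\A_1$ given $I_n$ and the second a kernel on $\A_2$ given $H_n$.

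From these factors I would construct $\pi_1$ and $\pi_2$ exactly as in Lemma~\ref{lem-strm1}. Set $\mu_n^1(\,\cdot \mid i_n) := \hat\nu^1_n(\,\cdot \mid i_n; p)$ off the set $D_n^1 := \{ i_n \in I_n \mid \hat\nu^1_n(A_1(x_n) \mid i_n; p) < 1\}$, and equal to a fixed analytically measurable selection of $\Gamma_1$ on $D_n^1$; define $\mu_n^2$ from $\nu^2_n$ analogously, using a selection of $\Gamma_2$ on $D_n^2 := \{ h_n \in H_n \mid \nu^2_n(A_2(x_n) \mid h_n; p) < 1\}$. Condition (\ref{cond-strm1a}), together with (\ref{cond-minmax-strm2}), forces $p\{ i_n \in D_n^1\} = 0$ and $p\{ h_n \in D_n^2\} = 0$, so the surgery on these null sets leaves the $p$-a.s.\ identities intact. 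The kernels $\mu_n^1,\mu_n^2$ are universally measurable and obey the control constraints everywhere, giving $\pi_1 \in \Pi_1$, $\pi_2 \in \Pi_2$, while the joint kernels $\mu_n := \mu_n^1 \times \mu_n^2$ have the product form (\ref{eq-minmax-pi}), so $\pi := \{\mu_n\}_{n\ge 0} \in \hat\Pi$. Since $\mu_n(\,\cdot \mid h_n) = \nu_n(\,\cdot \mid h_n; p)$ $p$-a.s.\ and $Q_n(\,\cdot \mid h'_n; p) = q(\,\cdot \mid x_n, a_n)$ $p$-a.s.\ by (\ref{cond-strm1b}), Lemma~\ref{lem-gen2} yields $p = \rho_{p_0}[\pi_1,\pi_2] \in \mmS$.

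I expect the main obstacle to be the measure-theoretic factorization step: justifying that once the joint conditional law of $(a^1_n, a^2_n)$ given $h_n$ is $p$-a.s.\ a product measure, its two factors are identified $p$-a.s.\ with the individual conditional kernels $\nu^1_n$ and $\nu^2_n$, so that the product structure genuinely decouples into a player-$1$ kernel and a player-$2$ kernel. One must also confirm that the selected version $\hat\nu^1_n(\,\cdot \mid i_n; p)$, which conditions on the coarser information $i_n$, is compatible with this player-$1$ marginal --- precisely the content of (\ref{cond-minmax-strm2}) --- and that all the constructed kernels remain universally measurable in their arguments after the null-set modification.
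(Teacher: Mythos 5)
Your proposal is correct and follows essentially the same route as the paper's proof: the forward inclusion is immediate, and the reverse inclusion is obtained by the same construction --- factor $\nu_n(\cdot \mid h_n; p)$ into $\hat\nu^1_n(\cdot \mid i_n; p) \times \nu^2_n(\cdot \mid h_n; p)$ via (\ref{cond-minmax-strm1})--(\ref{cond-minmax-strm2}), modify these kernels on the $p$-null sets $D^1_n, D^2_n$ using analytically measurable selections of $\Gamma_1, \Gamma_2$, and apply Lemma~\ref{lem-gen2} by viewing $(\pi_1,\pi_2)$ as a single product-form MDP policy. Your explicit justification of the factorization step (that an a.s.\ product conditional law decouples into the individual conditional kernels) fills in a detail the paper leaves implicit, but it does not change the argument.
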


\begin{proof}[Proof (outline)]
Clearly, every $p \in \mmS$ satisfies the constraints (\ref{cond-strm1a})-(\ref{cond-strm1b}) and (\ref{cond-minmax-strm1})-(\ref{cond-minmax-strm2}). The converse follows from the same line of proof for Lemma~\ref{lem-strm1}. Specifically, if $p$ satisfies these constraints, we define a pair of policies $\pi_i : = \{\mu_n^i\}_{n \geq 0} \in \Pi_i$, $i = 1, 2$, for the minimax control problem as follows.
For each $n \geq 0$, 
we let $\mu_n^1(da_n^1 \,|\, i_n) : =  {\hat \nu}^1_n( da^1_n  \,|\, i_n; p)$ except on a set $D^1_n \in \U(I_n)$ with $p\{ i_n \in D^1_n\} = 0$, and we  let $\mu_n^2 (da_n^2  \,|\, h_n) : = \nu^2_n(da^2_n  \,|\, h_n; p)$ except on a set $D^2_n \in \U(H_n)$ with $p\{ h_n \in D^2_n\} = 0$. These sets $D^1_n$ and $D^2_n$ consist of points where the control constraints are violated:  
$$D^1_n : =  \big\{ i_n \in I_n \mid {\hat \nu}^1_n( A_1(x_n)  \mid i_n; p) < 1 \big\}, \quad D^2_n : =  \big\{ h_n \in H_n \mid \nu^2_n( A_2(x_n)  \mid h_n; p) < 1 \big\}.$$
Then, with $f_i : \X \to \A_i$ being an analytically measurable selection of $\Gamma_i$, $i = 1, 2$, we let $\mu_n^1(da_n^1  \,|\, i_n) : = f_1(x_n)$ on $D^1_n$, and we let $\mu_n^2(da_n^2  \,|\, h_n) : = f_2(x_n)$ on $D^2_n$. 
Finally, by viewing $(\pi_1, \pi_2)$ as a single policy of the form (\ref{eq-minmax-pi}) in an MDP and noting that (\ref{cond-minmax-strm1}) implies $\nu_n (da_n \,|\, h_n; p) =  \nu^1_n(da^1_n \,|\, h_n; p) \, \nu^2_n(da^2_n \,|\, h_n; p)$ $p$-almost surely, we can apply Lemma~\ref{lem-gen2} to obtain that $p = \rho_x[\pi_1, \pi_2]$ and hence $p \in \mmS$.
\end{proof} 

\begin{proof}[Proof of Props.~\ref{prp-minmax-strm1} and \ref{prp-minmax-strm3} (outline)]
As Lemma~\ref{lem-gen1}(i) and the proof of Prop.~\ref{prp-Sc} showed, the set $\big\{ p \in \P(\Omega) \mid p \ \text{satisfies (\ref{cond-strm1a})-(\ref{cond-strm1b}) and (\ref{cond-minmax-strm2})} \big\}$ is analytic (Borel) if $\Gamma$ is analytic (Borel). For each $n \geq 0$, the set $\{  p \in \P(\Omega) \mid p \ \text{satisfies (\ref{cond-minmax-strm1})} \ \text{for the given} \ n \}$ is Borel by Lemma~\ref{lem-gen1}(iii). Combining these results with Lemma~\ref{lem-minmax-strm} yields that the set $\mmS$ is analytic, and that it is Borel if $\Gamma_1$ and $\Gamma_2$ are Borel. 
The desired conclusion for the set $\tmmS$ follows from the same proof for Lemma~\ref{lem-ext-set}. This proves Props.~\ref{prp-minmax-strm1}(i). 

Proposition~\ref{prp-minmax-strm1}(ii) follows from arguments similar to those given in the proofs of Prop.~\ref{prp-Sc-ummap-pol} and Lemma~\ref{lem-minmax-strm}. In particular, we construct the desired policy pair $\pi_i : = \{\mu_n^i\}_{n \geq 0} \in \Pi_i$, $i = 1, 2$ by modifying the stochastic kernels ${\hat \nu}^1_n( da^1_n  \,|\, i_n; \zeta(x))$ and $\nu^2_n(da^2_n  \,|\, h_n; \zeta(x))$ to satisfy the control constraints. This modification is similar to the one in the proof of Prop.~\ref{prp-Sc-ummap-pol} except that we also take into account the dependence of $\zeta(x)$ on the initial state $x$ as in the proof of Lemma~\ref{lem-minmax-strm}.

The proof of Prop.~\ref{prp-minmax-strm3} is almost the same as the above proof for Prop.~\ref{prp-minmax-strm1}(ii). The only extension we need is to replace $\zeta(x)$ by $\zeta(x, \theta)$ in the proof of Lemma~\ref{lem-minmax-strm} and in the preceding arguments, when defining the stochastic kernels $\{\mu_n^i[\theta]\}_{n \geq 0} \in \Pi_i$, $i = 1, 2$, that form the desired parametrized policies $\pi^i(\theta)$, for $\theta \in \Theta$.
\end{proof} 

\begin{proof}[Proof of Prop.~\ref{prp-minmax-strm2}.]
{\bf (i)} For $p, p' \in \P(\Omega)$, recall that condition (\ref{eq-minmax-rel1}) requires that ${\hat \nu}^1_n ( \cdot \,|\, i_n; p') =  {\hat \nu}^1_n( \cdot \mid i_n; p)$ $p'$-almost surely and condition (\ref{eq-minmax-rel2}) requires $\hat p_{I_n}(p')$ to be absolutely continuous w.r.t.\ $\hat p_{I_n}(p)$, for all $n \geq 0$. Let 
$E_n : = \big\{ (p, p') \in \P(\Omega)^2 \mid (p, p') \ \text{satisfies (\ref{eq-minmax-rel1}) for the given $n$} \big\}$ 
and
$F_n : =  \big\{ (p, p') \in \P(\Omega)^2 \mid (p, p') \ \text{satisfies (\ref{eq-minmax-rel2}) for the given $n$} \big\}$.  
We first prove that these sets are Borel. For the set $E_n$, this follows from applying Lemma~\ref{lem-gen1}(iv) with $(\theta, p)$ in the lemma corresponding to $(p,p')$ here and with $\Psi_1$ and $\Psi_2$ corresponding to $\hat \nu_n^1(da_n^1 \mid i_n; p')$ and $ \hat \nu_n^1(da_n^1 \mid i_n; p)$, respectively.  

Consider the set $F_n$. Note that the marginal $\hat p_{I_n}(p)$ of $p$ on $I_n$ is a Borel measurable function from $\P(\Omega)$ into $\P(I_n)$. For $\eta_1, \eta_2 \in \P(I_n)$, we write $\eta_1 \ll \eta_2$ if $\eta_1$ is absolutely continuous w.r.t.\ $\eta_2$, and we denote by $s(\eta_1, \eta_2)$ the singular part of $\eta_1$ w.r.t.\ $\eta_2$. By \cite[Thm.\ 2.10]{DuF64}, $s(\cdot, \cdot)$ is a Borel measurable function from $\P(I_n)^2$ into the space of nonnegative finite measures on $I_n$ (equipped with the topology of weak convergence), and therefore, the set 
$$B : = \big\{ (\eta_1, \eta_2) \in \P(I_n)^2 \mid \eta_1 \ll \eta_2 \big\} = \big\{ (\eta_1, \eta_2) \in \P(I_n)^2 \mid s(\eta_1, \eta_2)(I_n) = 0 \big\}$$
is Borel by \cite[Thms.\ 2.1, 2.10]{DuF64}. Since $F_n$ is the preimage of $B$ under the Borel measurable mapping $(p, p') \mapsto \big(\hat p_{I_n}(p'), \hat p_{I_n}(p) \big)$, it follows that $F_n$ is Borel.

Now $\hmmS = D \cap  \big(\X \times \cap_{n \geq 0} (E_n \cap F_n) \big)$, where 
$D : = \big\{ (x, p, p') \in \X \times \P(\Omega)^2 \mid (x, p) \in \tmmS, \, (x, p') \in \tmmS \big\}$.
Since $D$ is the preimage of the set $\tmmS \times \tmmS$ under the homeomorphism $(x, p, p') \mapsto (x, p, x, p')$ from $\X \times \P(\Omega)^2$ into $\X \times \P(\Omega) \times \X \times \P(\Omega)$, $D$ is analytic (Borel) if $\tmmS$ is analytic (Borel) by \cite[Props.\ 7.38, 7.40]{bs}.
Combining this with Prop.~\ref{prp-minmax-strm1}(i) and the Borel measurability of $E_n$ and $F_n$, it follows that $\hmmS$ is analytic (Borel) if $\Gamma_1$ and $\Gamma_2$ are analytic (Borel). This proves part (i) of the proposition.

\smallskip
\noindent{\bf (ii)} Let $p = \rho_x[\pi_1, \pi_2]$ for some $x \in \X$, $\pi_1 : = \{ \mu_n^1\}_{n \geq 0} \in \Pi_1$, and $\pi_2 \in \Pi_2$.  
It is clear that under Assumption~\ref{cond-minimax-abscont}, $\rho_x[\pi_1,  \tilde \pi_2] \in \hmmS(x,p)$ for all $\tilde \pi_2 \in \Pi_2$. Now consider an arbitrary $p' \in  \hmmS(x,p)$. Then $p' = \rho_x[\tilde \pi_1, \tilde \pi_2]$ for some $\tilde \pi_i = \{\tilde \mu_n^i\}_{n \geq 0} \in \Pi_i$, $i = 1, 2$, and furthermore, in view of (\ref{eq-minmax-rel1}), for all $n \geq 0$,
$$p' \big\{ \hat \nu_n^1(da_n^1 \mid i_n; p') = \tilde \mu_n^1(da_n^1 \mid i_n) \big\} = 1, \quad  p' \big\{ \hat \nu_n^1(da_n^1 \mid i_n; p') =  \hat \nu_n^1(da_n^1 \mid i_n; p)  \big\} = 1,$$
which implies $p' \big\{ \tilde \mu_n^1(da_n^1 \mid i_n) = \hat \nu_n^1(da_n^1 \mid i_n; p)  \big\} = 1$. Combining this with the relations
$$\hat p_{I_n}(p') \ll \hat p_{I_n}(p) \ \ \ \text{and} \ \ \ p \big\{ \hat \nu_n^1(da_n^1 \mid i_n; p) = \mu_n^1(da_n^1 \mid i_n) \big\} = 1, \qquad \forall \, n \geq 0,  $$ 
(where the first relation is from (\ref{eq-minmax-rel2})), we obtain that 
$p'\big\{ \tilde \mu_n^1(da_n^1 \mid i_n) = \mu_n^1(da_n^1 \mid i_n) \big\} = 1$ for all $n \geq 0$.
Since $\nu_n(da_n \,|\, h_n; p') = \tilde \mu_n^1(da_n^1 \,|\, i_n) \, \tilde \mu_n^2(d a_n^2 \,|\, h_n)$ $p'$-almost surely for all $n \geq 0$, this implies 
$$ \nu_n(da_n \mid h_n; p') = \mu_n^1(da_n^1 \mid i_n) \, \tilde \mu_n^2(d a_n^2 \mid h_n),\quad  \forall \, n \geq 0, \ \ \text{$p'$-almost surely}.$$
Then by viewing $(\pi_1, \tilde \pi_2)$ as a single policy of the form (\ref{eq-minmax-pi}) in an MDP and by applying Lemma~\ref{lem-gen2}, we have that $p'= \rho_x[\pi_1, \tilde \pi_2]$. This proves part (ii) of the proposition.
\end{proof} 

\subsubsection{Projective Class $\Sigma_2^1$ and Proof of Proposition~\ref{prp-ext-sel}} \label{sec-sel-proof}

First, we briefly explain some concepts and results from descriptive set theory \cite{Kec95} that will be needed in proving Prop.~\ref{prp-ext-sel}.
In descriptive set theory, the projective class $\Sigma_2^1$ consists of all sets $B$ such that for some Polish spaces $X$ and $Y$, 
$B \subset X$ and $B$ is the image of some coanalytic set in $Y$ under a continuous mapping from $Y$ into $X$ \cite[Sec.\ 37.A]{Kec95}. The class $\Sigma_2^1$ is closed under countable intersections and unions, and continuous preimages and images (in particular, projections) \cite[Prop.\ 37.1]{Kec95}. It is also closed under the Souslin operation and contains all analytically measurable sets, as well as all limit-measurable sets (cf.\ \cite[Sec.\ 37.A]{Kec95} and \cite[Prop.\ B.10]{bs}).

We call a set in the class $\Sigma_2^1$ a $\Sigma_2^1$ set. Besides the aforementioned basic facts, the following results about $\Sigma_2^1$ sets are important for our purpose.
Let $X$ and $Y$ be Polish spaces. For a function $f$, let $\grh f$ denote its graph.
\begin{enumerate}[leftmargin=0.7cm,labelwidth=!]
\item[\rm (a)]  A uniformization theorem of Kond{\^o} \cite[Cor.\ 38.7]{Kec95} asserts that
if $B$ is a $\Sigma_2^1$ set in $X \times Y$, 
then there exists a function $f : \text{proj}_X(B) \to Y$ with $\grh f \subset B$ such that $\grh f$ is a $\Sigma_2^1$ set.
\item[\rm (b)] Under the axiom of analytic determinacy, $\Sigma_2^1$ sets are universally measurable \cite[Thm.\ 36.20]{Kec95}.
\item[\rm (c)] Under the axiom of analytic determinacy, the function $f$ in (a) is universally measurable.
(This follows from (a) and (b), since for $E \in \B(Y)$, $f^{-1}(E) = \text{proj}_X\big(\grh f \cap X \times E\big)$ is a $\Sigma_2^1$ set.)
\end{enumerate}

We are now ready to prove Prop.~\ref{prp-ext-sel}. The line of argument is almost the same as that of \cite[Props.\ 7.47, 7.50]{bs} and similar to that of \cite[Cor.\ 1, Thm.\ 2]{BrP73}, except that we use the above results for $\Sigma_2^1$ sets instead of the properties of analytic or Borel sets used in those cited references.

\begin{proof}[Proof of Prop.~\ref{prp-ext-sel}]
{\bf (i)} For each $r \in \R$, the set $\{ (x, y) \in D \mid f(x, y) < r \}$ is a $\Sigma_2^1$ set. To see this, write it as $D \setminus F_r$, where $F_r : = \{ (x, y) \in D \mid f(x, y) \geq r \}$. Since $f$ is upper semianalytic, $F_r$ is analytic \cite[Lem.\ 7.30(1)]{bs}; by assumption $D$ is analytic. Therefore, $D \setminus F_r$ is a $\Sigma_2^1$ set.
Then the set 
$$E_r : = \{ x \in \text{proj}_X(D) \mid f^*(x) < r \} = \text{proj}_X \big( \{ (x, y) \in D \mid f(x, y) < r \} \big) 
$$ 
is also a $\Sigma_2^1$ set \cite[Prop.\ 37.1]{Kec95}, so by \cite[Thm.\ 36.20]{Kec95}, under the axiom of analytic determinacy, $E_r$ is universally measurable for all $r \in \R$.  By \cite[Thm.\ 4.1.6]{Dud02}, this implies that $f^*$ is universally measurable.

\smallskip
\noindent {\bf (ii)}
To prove the universal measurability of $E^*$, consider first the set
\begin{align}
   G : =  & \, \big\{ (x, y, b) \in X \times Y \times \bar \R \mid (x, y) \in D, \, f(x, y) \leq  b \big\} \notag \\
 =  & \, \cap_{k \geq 1} \cup_{r \in \bar \Q} \, \big\{ (x, y, b) \in X \times Y \times \bar \R \,\big|\, (x, y) \in D, \, f(x, y) \leq r, \, r \leq b + k^{-1} \big\}  \label{eq-sel-prf1}
\end{align} 
where $\bar \Q : = \Q \cup \{ - \infty, + \infty\}$ (recall that $\Q$ is the set of rational numbers). The preceding proof for part (i) showed also that $\{ (x, y) \in D \mid f(x, y) \leq r \}$ is a $\Sigma_2^1$ set for $r \in \bar \R$ (since it is $D$ for $r = +\infty$, and for $r < +\infty$, it can be expressed as a countable intersection of sets of the form $\{ (x, y) \in D \mid f(x, y) < r' \}$, $r' \in \R$). This implies that for each $k$ and $r$, the set in the right-hand side of (\ref{eq-sel-prf1}) is a $\Sigma_2^1$ set. Consequently, $G$ and $\text{proj}_{X \times \bar \R} (G)$ are $\Sigma_2^1$ sets and therefore, universally measurable under the axiom of analytic determinacy \cite[Thm.\ 36.20]{Kec95}. 

Next, define a mapping $T:  \text{proj}_X(D) \to X \times \bar \R$ by $T(x): = (x, f^*(x))$. 
The set $\text{proj}_X(D)$ is analytic since $D$ is analytic \cite[Prop.\ 7.39]{bs}. 
As proved in part (i), $f^*$ is universally measurable. Therefore, $T$ is universally measurable. 
Since the set $E^*$ can be expressed as
$$E^* = \big\{ x \in \text{proj}_X(D) \mid \exists \, y \in D_x \ \text{s.t.} \ f(x, y) = f^*(x) \big\}  = T^{-1} \big(\text{proj}_{X \times \bar \R} (G) \big),$$
in view of the universal measurability of $\text{proj}_{X \times \bar \R} (G)$ proved earlier, we have, by \cite[Cor.\ 7.44.1]{bs}, that $E^*$ is universally measurable.

We now prove the rest of the proposition, by constructing a function $\psi$ that satisfies the requirements for an arbitrary $\epsilon > 0$. To this end, we first define two other functions $\bar \psi$ and $\psi^*$ that will be needed in the construction. 

Since $D$ is analytic, by the Jankov-von Neumann selection theorem \cite[Prop.\ 7.49]{bs}, there exists an analytically measurable function $\bar \psi: \text{proj}_X(D) \to Y$ with $\grh \bar \psi \subset D$. 

As proved above, $G$ is a $\Sigma_2^1$ set, so by a uniformization theorem of Kond{\^o} \cite[Cor.\ 38.7]{Kec95}, there exists a function $\phi: \text{proj}_{X \times \bar \R} (G) \to Y$ with $\grh \phi \subset G$ such that $\grh \phi$ is a $\Sigma_2^1$ set. Then $\phi$ is universally measurable under the axiom of analytic determinacy, as discussed before the proof. 
Consequently, if $E^*$ is nonempty, the function $\psi^*: E^* \to Y$ with $\psi^*(x) : = \phi(x, f^*(x)) = (\phi  \circ  T)(x)$ is also universally measurable \cite[Prop.\ 7.44]{bs}. Note that by the definition of $\psi^*$, 
\begin{equation}  \label{eq-sel-prf2}
  \psi^*(x) \in D_x, \ \ \  f(x, \psi^*(x))= f^*(x), \qquad \forall \, x \in E^*.
\end{equation}

We now proceed to construct the desired function $\psi$.
For integers $k$, define sets
$$ F(k) : = \{ (x, y) \in D \mid f(x, y) < k \epsilon \}, \qquad B(k) : = \{ x \in \text{proj}_X(D) \mid (k-1) \epsilon \leq f^*(x) < k \epsilon \},$$
$$B(-\infty) : =  \{ x \in \text{proj}_X(D) \mid f^*(x) = - \infty \}, \qquad B(+\infty) : =  \{ x \in \text{proj}_X(D) \mid f^*(x) = + \infty \}.$$ 
As proved earlier in part (i), $F(k)$ is a $\Sigma^1_2$ set, and $f^*$ is universally measurable. So, under the axiom of analytic determinacy, all these sets are universally measurable. 

For each $k$, 
by a uniformization theorem of Kond{\^o} \cite[Cor.\ 38.7]{Kec95}, there exists a function
$\psi_k : \text{proj}_X\big(F(k)\big) \to Y$ with $\grh \psi_k \subset F(k)$ such that $\grh \psi_k$ is a $\Sigma_2^1$ set. As discussed before the proof, under the axiom of analytic determinacy, $\psi_k$ is universally measurable.
Let $k^*$ be an integer such that $k^* \leq - \epsilon^{-2}$. Now define $\psi :  \text{proj}_X(D) \to Y$ as follows.
Let $\psi(x) : = \psi^*(x)$ for $x \in E^*$. For $x \in \text{proj}_X(D) \setminus E^*$, let
$$
\psi(x) : = \begin{cases}
  \psi_k(x) & \text{if} \ x \in B(k), \\
  \bar \psi(x) & \text{if} \ x \in B(+\infty), \\
  \psi_{k^*}(x) & \text{if} \ x \in B(-\infty).  
  \end{cases}
$$
Then $\psi$ is universally measurable and $\grh \psi \subset D$, and moreover, in view of (\ref{eq-sel-prf2}) and the definitions of $B(k), B(+\infty)$, and $B(-\infty)$, $\psi$ satisfies (\ref{eq-ext-sel1})-(\ref{eq-ext-sel2}), as required. This completes the proof.
\end{proof} 

\begin{rem} \rm \label{rmk-sigma21}
As can be seen from the preceding proof, for the conclusions of Prop.~\ref{prp-ext-sel} to hold, it suffices that $D$ is a $\Sigma_2^1$ set and $f$ is such that $\{ (x, y) \in D \mid f(x, y) < r \}$ is a $\Sigma_2^1$ set for all $r \in \R$. (Essentially we only need to make a minor change in the preceding proof, which is to apply Kond{\^o}'s uniformization theorem to $D$ in the definition of the function $\bar \psi$.) One can draw a parallel between this generalization of Prop.~\ref{prp-ext-sel} and the result in \cite[Props.\ 7.47, 7.50]{bs}, in which the above two sets are assumed to be analytic. Although this is beyond our scope, we note that the similarity between these two results reflects the parallelism between $\Sigma_2^1$ sets and analytic sets (see \cite[Sec.\ 37]{Kec95} on the hierarchy of projective classes for more details).
\qed
\end{rem}

\section*{Acknowledgements}
The author is grateful to Professor Eugene Feinberg for pointing to several important references on strategic measures and stochastic games and valuable feedback on earlier versions of this work. The author also thanks Professor William Sudderth, for mentioning the early work \cite{MPS90} on Borel gambling problems, which also used Kond{\^o}'s uniformization theorem in its analysis; Professor Serdar Y{\"u}ksel, for helpful discussion on minimax control problems; and an anonymous reviewer, whose critical comments helped improve the paper. 
This research was supported, during different periods of time, by grants from DeepMind, Alberta Machine Intelligence Institute (AMII), and Alberta Innovates---Technology Futures (AITF).

\addcontentsline{toc}{section}{References} 
\bibliographystyle{apa} 
\let\oldbibliography\thebibliography
\renewcommand{\thebibliography}[1]{%
  \oldbibliography{#1}%
  \setlength{\itemsep}{0pt}%
}
{\fontsize{9}{11} \selectfont
\bibliography{strat_meas_BorelDP_bib}}

\end{document}